\providecommand\@dotsep{5}\def\listtodoname{List of Todos}\def\listoftodos{\hypersetup{linkcolor=black}\@starttoc{tdo}\listtodoname\hypersetup{linkcolor=blue}}\makeatother
\newtheorem{theorem}{Theorem}[section]
\newtheorem{lemma}[theorem]{Lemma}
\newtheorem{corollary}[theorem]{Corollary}
\newtheorem{proposition}[theorem]{Proposition}
\theoremstyle{definition}
\newtheorem{definition}[theorem]{Definition}
\theoremstyle{remark}
\newtheorem{remark}[theorem]{Remark}
\numberwithin{equation}{section}
\def\C{\mathbb C}
\def\R{\mathbb R}
\def\N{\mathbb N}
\renewcommand{\leq}{\leqslant}
\renewcommand{\geq}{\geqslant}
\def\p{\partial}
\DeclareMathOperator{\supp}{supp}
\newcommand*\xbar[1]{%
   \hbox{%
     \vbox{%
       \hrule height 0.5pt % The actual bar
       \kern0.5ex%         % Distance between bar and symbol
       \hbox{%
        % \kern-0.05em%      % Shortening on the left side
         \ensuremath{#1}%
         %\kern-0.05em%      % Shortening on the right side
       }%
     }%
   }%
} 
\title[]{Calder\'{o}n problem for fractional Schr\"{o}dinger operators on closed Riemannian manifolds}
\author[Feizmohammadi]{Ali Feizmohammadi}
\address{A. Feizmohammadi, Department of Mathematics, University of Toronto, Road Deerfield Hall, 3008K Mississauga, ON L5L 1C6}
\email{ali.feizmohammadi@utoronto.ca}
\author[Krupchyk]{Katya Krupchyk}
\address
        {K. Krupchyk, Department of Mathematics\\
University of California, Irvine\\
CA 92697-3875, USA }
\email{katya.krupchyk@uci.edu}
\author[Uhlmann]{Gunther Uhlmann}
\address
       {G. Uhlmann, Department of Mathematics\\
       University of Washington\\
       Seattle, WA  98195-4350\\
       USA\\
        and Institute for Advanced Study of the Hong Kong University of Science and Technology}
\email{gunther@math.washington.edu}
\keywords{inverse problems, fractional Calder\'{o}n problem, fractional Laplacian, control of eigenfunctions, nonlocal Schr\"{o}dinger equation, Carlson's theorem, unique continuation, antipodal sets, Gel'fand inverse spectral problem, boundary control method}
\begin{document}

%\begin{titlepage}
%\maketitle
%\end{titlepage}

\maketitle
\begin{abstract}
We study an analog of the anisotropic Calder\'on problem for fractional Schr\"odinger operators $(-\Delta_g)^\alpha + V$ with $\alpha \in (0,1)$ on closed Riemannian manifolds of dimensions two and higher. We prove that the knowledge of a Cauchy data set of solutions of the fractional Schr\"odinger equation, given on an open nonempty a priori known subset of the manifold determines both the Riemannian manifold up to an isometry and the potential up to the corresponding gauge transformation, under certain geometric assumptions on the manifold as well as the observation set. Our method of proof is based on: (i) studying a new variant of the Gel'fand inverse spectral problem without the normalization assumption on the energy of eigenfunctions, and (ii) the discovery of an {\em entanglement} principle for nonlocal equations involving two or more compactly supported functions. Our solution to (i) makes connections to antipodal sets as well as local control for eigenfunctions and quantum chaos, while (ii) requires sharp interpolation results for holomorphic functions. We believe that both of these results can find applications in other areas of inverse problems.
\end{abstract}

\section{Introduction}
Let $(M,g)$ be a smooth closed and connected Riemannian manifold of dimension $n \geq 2$. By closed, we mean that the manifold is compact and without boundary. Let $V \in C^{\infty}(M)$ and let $O\subset M$ be a nonempty open set such that $M\setminus\overline{O}$ is also nonempty. We will refer to $O$ as the \emph{observation set}. We will assume that the observation set $O$, together with $g|_{O}$ and $V|_{O}$ are known but that the complement of this set is inaccessible to us, that is to say, the differential and topological structure of $M\setminus O$ together with restrictions of the metric $g$ and the function $V$ on this set are all a priori unknown. 

Before stating the main inverse problem of interest, we recall the well known and widely open anisotropic Calder\'{o}n problem that can be equivalently stated in the setting of closed manifolds as follows. Suppose that we are given the knowledge of the Cauchy data set
$$ {\widetilde {\mathcal C}}^O_{M,g,V} = \{(u|_{O}, (-\Delta_g u)|_{O})\,:\, \text{$u \in C^{\infty}(M),$ $-\Delta_gu+Vu=0$ on $M\setminus \overline O$}\},$$
associated to solutions to Schr\"{o}dinger equations. Here, $-\Delta_g$ is the positive Laplace--Beltrami operator on $M$, that is given in local coordinates via 
\[
 -\Delta_g = -\frac{1}{\sqrt{|g|}} \sum_{j,k=1}^n \frac{\p}{\p x^j}\left( \sqrt{|g|} \,g^{jk} \frac{\p }{\p x^k}  \right),
 \]
where $(g^{jk})=(g_{jk})^{-1}$ and $|g|=\det(g_{jk})$. The anisotropic Calder\'on problem for the Schr\"odinger equation is the question of whether the Cauchy data set $\widetilde{ {\mathcal C}}^O_{M,g,V}$ determines the isometry class of the manifold $(M,g)$, as well as the function $V$, up to the corresponding gauge transformation. This is one of the most well known and widely open problems in the literature of inverse problems and goes back to its introduction in the pioneering  paper of Calder\'{o}n in \cite{Ca}. When $n = 2$, the problem is solved, see \cite{LaUh, Nachman, Bukhgeim_2008, Imanuvilov_Uhlmann_Yamamoto_2011, Imanuvilov_Uhlmann_Yamamoto_2012, Guillarmou_Tzou_2011, Carstea_Liimatainen_Tzou_2024}, under an additional gauge arising from conformal invariance of the Laplace--Beltrami operator. In dimensions three and higher, the problem is solved for real-analytic manifolds (see, e.g., \cite{KV,LU, LaUh}), but it remains widely open for the case of smooth manifolds. We refer the reader to \cite{DKSU, DKLS} for state of the art results and to \cite{Uh} for a survey on the anisotropic Calder\'{o}n problem.

The main goal of this manuscript is to obtain uniqueness results for an analog of the anisotropic Calder\'on problem associated to fractional Schr\"odinger equations. To state the problem, recall that the Laplace--Beltrami operator $-\Delta_g$ is a self-adjoint operator on $L^2(M)$ with the domain $\mathcal{D}(-\Delta_g) = H^2(M)$, the standard Sobolev space on $M$. Letting $\alpha \in (0,1)$ and using the functional calculus for self-adjoint operators, we define the fractional Laplacian $(-\Delta_g)^\alpha$ as an unbounded self-adjoint operator on $L^2(M)$ with the domain $\mathcal{D}((-\Delta_g)^\alpha) = H^{2\alpha}(M)$ (see Section \ref{sec_pre} for further details).

Let $V \in C^\infty(M)$ and consider the fractional Schr\"odinger operator
\begin{equation}
\label{Schrodinger_eq}
P_{M,g,V} = (-\Delta_g)^\alpha + V.
\end{equation}
Associated to \eqref{Schrodinger_eq}, we define the Cauchy data set on $O$ as follows:
\begin{equation}
\label{DN_map}
\mathcal{C}_{M,g,V}^O = \{(u|_O, ((-\Delta_g)^\alpha u)|_O) \,:\, \text{$u \in C^\infty(M)$, $P_{M,g,V} \, u = 0$ on $M \setminus \overline{O}$} \}.
\end{equation}

We study the following inverse problem, which may be viewed as a variant of the anisotropic Calder\'on problem stated for the nonlocal Schr\"{o}dinger operator \eqref{Schrodinger_eq}, as follows:
\begin{itemize}
\item[{\bf (IP)}] Does the knowledge of the Cauchy data set $\mathcal{C}_{M,g,V}^O$ uniquely determine the differential and topological structure of $M$, the metric $g$, and the function $V$?
\end{itemize}

When $V$ is identically zero on $M$ and $(M,g)$ is sought after, (IP) has been solved in complete geometric generality in \cite{FGKU},  see also the precursor \cite{Fei21}, which solved the problem under a Gevrey analyticity assumption on the set $O$, and the follow-up work \cite{CO}, which deals with noncompact manifolds. For an alternative perspective on the approach of \cite{FGKU}, see \cite{Ruland_2023}. In the case of Euclidean space equipped with a known Riemannian metric, the uniqueness for the recovery of the potential function $V$ is established in \cite{Ghosh_Lin_Xiao_2017}. We also mention the recent works \cite{Zimmermann_2023, Lin_Zimmermann_2024}, where in the Euclidean space, the inverse problems of recovering a scalar leading term and a zeroth order potential in the nonlocal diffuse optical tomography equations were studied.

To the best of our knowledge, the inverse problem of recovering both the Riemannian metric and the potential, and more generally, the differential and topological structure of the manifold, has remained open up to now. One of the main challenges here is that both a nonlocal term, namely $(-\Delta_g)^\alpha u$, and a local term, $Vu$, are simultaneously present in the same equation. Modifications of the arguments in \cite{FGKU} are not sufficient to obtain uniqueness results for (IP). Indeed, the approach of \cite{FGKU} and the follow-up works in this direction fundamentally rely on the fact that $P_{M,g,V}$ and $\Delta_g$ commute when $V$ is identically equal to zero. This will be further elaborated on in Section~\ref{sec_literature}, along with a more thorough discussion of related inverse problems for nonlocal equations. 

To address our results on (IP), we start with a definition.
\begin{definition}
	\label{def_antipodal}
	Given any $p\in M$, we define the antipodal set of $p$, denoted by $\mathcal A_{M,g}(p)$, as the set of all points that are antipodal to $p$, that is to say,
	$$ \mathcal A_{M,g}(p) = \{q \in M\,:\, \textrm{dist}_g(p,q)= \max_{p'\in M} \textrm{dist}_{g}(p,p') \}.$$
\end{definition}
Note that since the manifold $M$ is compact, for each $p \in M$, we have $\mathcal{A}_{M,g}(p) \ne \emptyset$. 
We remark that antipodal sets have been an active area of research in differential geometry specially in the context of symmetric closed Riemannian manifolds, see e.g. \cite{CN,Ta,TaTa} as well as the survey article \cite{Chen}. Our main result regarding (IP) can now be stated as follows.
\begin{theorem}
	\label{t1}
	Let $\alpha \in (0,1)$. For $j=1,2$, let $(M_j,g_j)$ be a smooth closed and connected Riemannian manifold of dimension $n \geq 2$, and let $V_j \in C^{\infty}(M_j)$. Let $O\subset M_1\cap M_2$ be a nonempty open connected set such that $M_j\setminus\overline{O}\ne \emptyset$, $j=1,2$,   and assume that for $j=1,2$,
	\begin{itemize}
		\item [{\bf(H)}]{$(M_j\setminus O,g_j)$ is nontrapping and there exists $p\in O$ such that $\mathcal A_{M_j,g_j}(p)\subset O$.}
	\end{itemize}
	Assume also that $g_1|_{O}=g_2|_{O}$ and  $V_1|_{O}=V_2|_O=0$. Then, 
	\begin{equation}\label{thm_eq} 
		\mathcal C_{M_1,g_1,V_1}^{O} = \mathcal C_{M_2,g_2,V_2}^{O}\implies \exists\, \Phi: M_1 \stackrel{\cong}{\to} M_2 \quad \text{with  $g_1= \Phi^\star g_2$ and $V_1= V_2\circ\Phi$}.\end{equation}
	Moreover, the smooth diffeomorphism $\Phi$ is equal to identity on the set $O$.
\end{theorem}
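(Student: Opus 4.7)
The plan is to convert the Cauchy data into spectral information and then apply a suitable inverse spectral theorem, in the spirit of the approach in \cite{FGKU}, while overcoming the key new obstruction that the local potential $V$ prevents $P_{M,g,V}$ from commuting with the Laplace--Beltrami operator. I expect to need precisely the two ingredients foreshadowed in the abstract: an entanglement principle, based on sharp holomorphic interpolation in the spirit of Carlson's theorem, and a new Gel'fand-type inverse spectral theorem that does not presuppose knowledge of eigenfunction normalizations.

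First I would extract a source-to-solution map from $\mathcal C^O_{M,g,V}$. Given $f \in C^\infty_c(O)$, I solve $P_{M,g,V} u = f$ in $M$; since $V|_O = 0$, the pair $(u|_O, ((-\Delta_g)^\alpha u)|_O) = (u|_O, f|_O)$ lies in $\mathcal C^O_{M,g,V}$, which identifies the map $f \mapsto u|_O$. Upgrading this to the meromorphic resolvent family $(P_{M,g,V} - z)^{-1}$ tested against functions supported in $O$, and reading off the poles and residues, I obtain the eigenvalues $\mu_k$ of $P_{M,g,V}$ together with the products $\psi_k(x)\overline{\psi_k(y)}$ for $x, y \in O$, where $\{\psi_k\}$ is an $L^2$-orthonormal basis of eigenfunctions of $P_{M,g,V}$.

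The next and most delicate step is to pass from this spectral data of $P_{M,g,V}$ to spectral data of the Laplace--Beltrami operator itself. Since $\psi_k$ is generally not an eigenfunction of $-\Delta_g$, I would invoke the entanglement principle: probing with two or more functions compactly supported in disjoint pieces of $O$, the response of $(-\Delta_g)^\alpha$ is real-analytic off the supports and admits a holomorphic extension in a complex spectral parameter. A Carlson-type sharp interpolation argument should then separate the nonlocal contribution of $(-\Delta_g)^\alpha$ from the local contribution of $V$, ultimately producing the eigenvalues $\lambda_k$ of $-\Delta_g$ and the traces $\phi_k|_O$ of an $L^2$-orthonormal basis of Laplace eigenfunctions---but only modulo unknown $k$-dependent scalar normalizations, since no global inner products on $M$ are directly visible from $O$.

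The remaining task is an inverse spectral problem of Gel'fand type without normalization: recover $(M,g)$ from $\{\lambda_k\}$ together with the unnormalized restrictions $\phi_k|_O$. Here the hypothesis (H) enters in an essential way. The antipodal condition provides a point $p\in O$ all of whose farthest points in $M$ already lie in $O$; combined with unique continuation and local $L^2$-control/quantum-ergodicity-type estimates for eigenfunctions, this allows one to pin down each unknown scalar normalization by comparing the mass of $\phi_k$ near $p$ with the mass near $\mathcal A_{M,g}(p) \subset O$. Once the normalizations are fixed, the nontrapping assumption on $M\setminus O$ permits a boundary-control/wave-equation argument to recover $(M,g)$ up to an isometry $\Phi$ equal to the identity on $O$; with the metric in hand, $V$ on $M\setminus O$ follows from comparing $\mu_k$ with $\lambda_k^\alpha$ together with standard eigenvalue perturbation identities. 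I expect the hard part to be the entanglement step, where the holomorphic bounds must be strong enough to decouple local and nonlocal effects in the presence of the unknown $V$, and, closely after, the normalization-free Gel'fand recovery, which is exactly what the antipodal hypothesis seems designed to enable.
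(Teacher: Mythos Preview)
Your outline correctly identifies the two central ingredients and the overall architecture, but the deployment of each differs substantially from the paper's proof, and in a couple of places your route has a real gap.

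First, the paper never extracts spectral data of $P_{M,g,V}$. Instead, starting from the source-to-solution map you describe, it applies powers $\Delta_g^m$ to the equation $(-\Delta_g)^\alpha u + Vu = f$, uses the heat-semigroup representation $(-\Delta_g)^{-\alpha} = \Gamma(\alpha)^{-1}\int_0^\infty e^{t\Delta_g} t^{\alpha-1}\,dt$, and integrates by parts $m$ times; the assumption $V|_O=0$ is precisely what makes the endpoint contributions vanish. This yields directly that $e^{t\Delta_g}(-\Delta_g)^\alpha u\big|_O$ is determined by the Cauchy data for all $t>0$ (Lemma~\ref{lem_reduce_to_heat}), after which a Laplace transform and residue analysis recover the eigenvalues of $-\Delta_g$ and a Schauder basis of Laplace eigenfunctions restricted to $O$ --- never those of $P_{M,g,V}$. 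Your proposed detour through eigendata of $P_{M,g,V}$, followed by an entanglement argument to ``separate the nonlocal contribution of $(-\Delta_g)^\alpha$ from the local contribution of $V$,'' is not how the entanglement principle enters, and it is unclear how that separation could actually be carried out. In the paper, entanglement is used for a much more focused purpose (Lemma~\ref{density_lemma}): to guarantee that the projections $\pi_k S_{M,g,V}(f)$ span the full $\lambda_k$-eigenspace as $f$ varies, which is what makes the residue analysis surjective onto each eigenspace.

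Second, your mechanism for the normalization-free Gel'fand step is not the one that works. The paper does not ``compare mass of $\phi_k$ near $p$ with mass near $\mathcal A_{M,g}(p)$.'' Rather (proof of Theorem~\ref{t2}): one Gram--Schmidt orthonormalizes the Schauder basis on $M_1$ and applies the same linear operations on $M_2$; the nontrapping hypothesis is used, via the observability inequality of Bardos--Lebeau--Rauch (Lemma~\ref{lem_nontrapping}), to bound $\|\psi_k^{(2)}\|_{L^2(M_2)}$ uniformly from $\|\psi_k^{(2)}\|_{L^2(O)}$, ensuring the relevant eigenfunction series converge on $M_2$. Then for $f_1\in C_0^\infty$ supported in a small ball near $q^{(1)}\in\mathcal A_{M_1,g_1}(p)$, finite propagation speed forces the associated wave on $M_1$ to vanish near $p$ for time comparable to $\mathrm{dist}_{g_1}(p,q^{(1)})$; this transfers via \eqref{psi_eq} to the wave on $M_2$, and Tataru's unique continuation forces the transplanted expansion $f_2$ to vanish on $M_2\setminus O$. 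From this one deduces the transplanted basis is actually orthonormal on $M_2$, and the standard Gel'fand result finishes. The roles of the antipodal and nontrapping conditions are thus essentially swapped from your description.

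Finally, the recovery of $V$ uses no eigenvalue perturbation: once $\Phi$ is known, one pulls back to $(M_1,g_1)$, takes $u_1=S_{M_1,g_1,V_1}(f)$ and its counterpart $u_2$ for $\tilde V_2=V_2\circ\Phi$, applies $(\textrm{UCP})'$ to conclude $u_1=u_2$ on all of $M_1$, whence $(\tilde V_2-V_1)u_1=0$ on $M_1\setminus\overline O$; a density argument for the set $\{u_1\ne 0\}$ then gives $V_1=\tilde V_2$.
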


\begin{remark}
Note that due to the obstruction to uniqueness in (IP), presented in Lemma \ref{lem_obstruction}, the implication \eqref{thm_eq} is the best possible recovery result for this problem. 
\end{remark}

An immediate consequence of the proof of Theorem \ref{t1} given in Section~\ref{sec_proofs} is the following result, which is similar to the Euclidean one in \cite{Ghosh_Lin_Xiao_2017}.
 \begin{corollary}
	\label{cor_recovery_potential}
	Let $\alpha \in (0,1)$. Let $(M,g)$ be a smooth closed and connected Riemannian manifold of dimension $n \geq 2$, and let $V_j \in C^{\infty}(M_j)$, $j=1,2$.  Let $O\subset M$ be a nonempty open set such that $M\setminus\overline{O}\ne \emptyset$.  Assume also that $V_1|_{O}=V_2|_O$. Then, 
	\[
	\mathcal C_{M,g,V_1}^{O} = \mathcal C_{M,g, V_2}^{O}\implies V_1= V_2 \text{ on } M.
		\]
\end{corollary}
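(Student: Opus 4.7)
The plan is to specialize the strategy of Theorem~\ref{t1} to the situation in which the manifold $(M,g)$ is already a priori identified on both sides; in that case the topological and differential reconstruction becomes vacuous, the hypothesis \textbf{(H)} can be dropped, and only the recovery of the potential survives. The key analytic input is the fractional unique continuation principle (UCP) on closed manifolds established in \cite{FGKU}: if $u \in C^\infty(M)$ satisfies $u|_W = 0$ and $((-\Delta_g)^\alpha u)|_W = 0$ on some nonempty open set $W \subset M$, then $u \equiv 0$ on $M$.

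First I would use the equality of the Cauchy data together with this UCP to show that the two problems share all their solutions. Fix any $u_1 \in C^\infty(M)$ with $P_{M,g,V_1} u_1 = 0$ on $M \setminus \overline{O}$. The assumption produces a companion $u_2 \in C^\infty(M)$ with $P_{M,g,V_2} u_2 = 0$ on $M \setminus \overline{O}$ and
\[
(u_1-u_2)|_O = 0, \qquad \bigl((-\Delta_g)^\alpha (u_1-u_2)\bigr)|_O = 0.
\]
The UCP applied to $u_1 - u_2$ forces $u_1 \equiv u_2$ on $M$, so $u_1$ solves both equations. Subtracting yields the pointwise identity $(V_1 - V_2)\, u_1 = 0$ on $M \setminus \overline{O}$ for every admissible $u_1$.

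To promote this identity to $V_1 \equiv V_2$ on $M \setminus \overline{O}$ I would argue by contradiction. If $V_1 - V_2$ is not identically zero there, continuity produces a nonempty open set $U \subset M \setminus \overline{O}$ on which $V_1 - V_2$ is nowhere zero, so every admissible $u \in C^\infty(M)$ with $P_{M,g,V_1} u = 0$ on $M \setminus \overline{O}$ must vanish on $U$. But then $(-\Delta_g)^\alpha u = -V_1 u \equiv 0$ on $U \subset M \setminus \overline{O}$ as well, and a second application of the UCP (with $W = U$) yields $u \equiv 0$ on $M$. To contradict this, I would construct a nontrivial solution: since $P_{M,g,V_1}$ is self-adjoint on $L^2(M)$ with compact resolvent, its kernel $K \subset C^\infty(M)$ is finite dimensional, and the infinite dimensionality of $C_c^\infty(O)$ permits choosing $F \in C_c^\infty(O) \setminus \{0\}$ with $F \perp K$ in $L^2(M)$. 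Solving $P_{M,g,V_1} u = F$ then yields, by elliptic regularity of the pseudodifferential operator $(-\Delta_g)^\alpha + V_1$, a nontrivial $u \in C^\infty(M)$ satisfying $P_{M,g,V_1} u = 0$ on $M \setminus \overline{O}$, contradicting the vanishing conclusion above. Hence $V_1 = V_2$ on $M \setminus \overline{O}$, and together with $V_1|_O = V_2|_O$ and smoothness this gives $V_1 = V_2$ on all of $M$.

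The principal obstacle is the fractional UCP on closed manifolds, which is delicate but already available from \cite{FGKU} and, in any case, will be employed repeatedly in the proof of Theorem~\ref{t1}. Once that tool is in hand, the remainder of the argument is essentially algebraic together with standard self-adjoint functional calculus, which is why the result is described as an \emph{immediate consequence} of the machinery developed for Theorem~\ref{t1}.
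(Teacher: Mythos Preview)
Your argument is correct and follows essentially the same route as the paper's proof of Theorem~\ref{t1} specialized to $M_1=M_2=M$, $g_1=g_2=g$, $\Phi=\mathrm{Id}$: one application of the fractional UCP (Theorem~\ref{thm_ent} with $N=1$) to identify $u_1=u_2$, the pointwise identity $(V_1-V_2)u_1=0$ on $M\setminus\overline O$, and a second application of the UCP to show a nontrivial solution cannot vanish on an open subset of $M\setminus\overline O$. One small imprecision: $P_{M,g,V_1}$ need not be self-adjoint if $V_1$ is complex-valued, so the solvability condition for $P_{M,g,V_1}u=F$ is $F\perp \mathcal K_{M,g,\overline{V_1}}$ rather than $F\perp K$; this is exactly what Proposition~\ref{prop_direct} and Remark~\ref{rem_structure_H_set} provide, and your dimension-counting still yields a nonzero admissible $F$.
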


Now, various corollaries of Theorem \ref{t1} can be obtained by seeking geometric conditions on the set $O$ that guarantee (H). For example, as we will show in Section~\ref{sec_proofs}, it is sufficient to impose a simplicity assumption on the submanifold $M \setminus O$, meaning that the inaccessible submanifold $(M_j \setminus O, g_j)$, $j=1,2$, is a smooth, simply connected manifold with a smooth, strictly convex boundary and no conjugate points. The simplicity assumption on $(M_j\setminus O,g_j)$, $j=1,2,$ guarantees that (H) is actually satisfied for all $p\in O$ that are sufficiently near its boundary. This leads to the following immediate corollary.

\begin{corollary}
	\label{cor1}
	Let $\alpha\in (0,1)$. For $j=1,2$, let $(M_j,g_j)$ be a smooth closed and connected Riemannian manifold of dimension $n \geq 2$, and let $V_j \in C^{\infty}(M_j)$. Let $O\subset M_1\cap M_2$ be a nonempty open connected set such that $M_j\setminus\overline{O}\ne \emptyset$, $j=1,2$. Suppose that $(M_j\setminus O,g_j)$, $j=1,2,$ is a simple Riemannian manifold, that $g_1|_{O}=g_2|_{O}$ and  $V_1|_{O}=V_2|_O=0.$ Then the implication \eqref{thm_eq} holds.
\end{corollary}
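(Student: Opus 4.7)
The plan is to derive Corollary~\ref{cor1} directly from Theorem~\ref{t1} by verifying that hypothesis (H) holds for both $j = 1, 2$ under the simplicity assumption on $(M_j \setminus O, g_j)$. Since $g_1$ and $g_2$ agree on $O$, I expect to identify a single point $p \in O$ that witnesses (H) simultaneously for both manifolds.

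The nontrapping portion of (H) is immediate from the definition of a simple Riemannian manifold: strict convexity of $\partial(M_j \setminus O)$ together with the absence of conjugate points along any geodesic forces every maximal geodesic in $M_j \setminus O$ to exit through the boundary in finite time, which is precisely the nontrapping condition.

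The substantive content is the antipodal condition. Set $\Sigma_j := M_j \setminus O$ and let $D_j := \mathrm{diam}(\Sigma_j, g_j)$, finite by compactness. For any $p \in O$ within distance $\epsilon > 0$ of $\partial \Sigma_j$, the triangle inequality applied at the nearest boundary point yields
\[
\max_{q \in \Sigma_j} \mathrm{dist}_{g_j}(p, q) \leq \epsilon + D_j.
\]
To conclude $\mathcal{A}_{M_j, g_j}(p) \subset O$, it therefore suffices to exhibit some $q \in O$ with $\mathrm{dist}_{g_j}(p, q) > \epsilon + D_j$. I plan to produce such $q$ by exploiting the structural consequences of simplicity: $\Sigma_j$ is diffeomorphic to a closed ball with its diameter realized between two points of the strictly convex boundary $\partial \Sigma_j$, while $M_j$ is closed and $O$ is a nonempty open set. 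This should allow the construction of a geodesic emanating from a neighborhood of $\partial \Sigma_j$ into $O$ of length strictly greater than $\epsilon + D_j$; taking $p$ near such a boundary point and $q$ far along this geodesic furnishes the required separation.

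The main obstacle I anticipate is the quantitative production of this long geodesic in $O$. I expect to proceed by contradiction: if no such $q$ existed for any $p$ near $\partial \Sigma_j$, then $M_j$ would be contained in a closed $D_j$-neighborhood of $\partial \Sigma_j$, which combined with the strict convexity of $\partial \Sigma_j$ in $\Sigma_j$ and the openness of the nonempty set $O \subset M_j$ ought to yield a structural contradiction with $M_j$ being a smooth closed manifold properly containing the simple region $\Sigma_j$. Once $p$ is produced for each $j$, the hypothesis $g_1|_O = g_2|_O$ ensures a common choice works, and Theorem~\ref{t1} concludes the proof.
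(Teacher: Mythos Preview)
Your verification of the nontrapping part of (H) is fine and matches the paper. The antipodal argument, however, has a real gap. You reduce to producing, for some $p\in O$ near $\partial\Sigma_j$, a point $q\in O$ with $\mathrm{dist}_{g_j}(p,q)>\epsilon+D_j$, and propose to argue by contradiction: if no such $q$ exists for any such $p$, you claim $M_j$ lies in a closed $D_j$-neighborhood of $\partial\Sigma_j$, which should contradict $M_j$ being closed and properly containing $\Sigma_j$. But all you can actually extract from that failure is a bound on $\mathrm{diam}(M_j)$ (roughly $2D_j$), and that is no contradiction; closed manifolds have finite diameter, and nothing about strict convexity of $\partial\Sigma_j$ or openness of $O$ forces $\mathrm{diam}(M_j)>2D_j$. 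Your alternative of finding a long geodesic ``into $O$'' is equally unsupported: geodesics leaving $\partial\Sigma_j$ may re-enter $\Sigma_j$ immediately, and even a long geodesic need not be minimizing, so its endpoint need not lie at large \emph{distance} from $p$. Finally, your passage from a separate $p$ for each $j$ to a common $p$ is only asserted, not argued.

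The paper avoids diameter comparison entirely. It enlarges $M_j\setminus O$ slightly to a simple manifold $\overline{\mathcal U_j}$ with a common boundary $\partial\mathcal U_1=\partial\mathcal U_2\subset O$ (this is precisely where $g_1|_O=g_2|_O$ is used, and it delivers a common $p$ automatically), and shows that for any $p\in\partial\mathcal U_j$ the set $\mathcal A_{M_j,g_j}(p)$ cannot meet $M_j\setminus O$: if it did at some $q$, then $q$ would be \emph{interior} to $\mathcal U_j$, so the unique geodesic in $\overline{\mathcal U_j}$ from $p$ to $q$ can be continued a bit past $q$ inside $\overline{\mathcal U_j}$ to a point strictly farther from $p$, contradicting the maximality of $\mathrm{dist}_{g_j}(p,q)$. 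Simplicity is used exactly here, to guarantee that the extended geodesic remains minimizing; no global information about $O$ or its diameter is needed.
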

\begin{remark}
	The above mentioned corollary yields many concrete examples of the condition (H) in which the volume of the observation set $O$ could be arbitrarily small compared to the volume of $M\setminus O$. Indeed, consider any compact simple Riemannian manifold $(N,g)$ with smooth boundary $\p N$. Subsequently, consider a smooth extension of $N$ into a smooth, closed and connected manifold $M$ and also smoothly extend the metric from $N$ to $M$. Then, setting $O=M\setminus N$, condition (H) will be satisfied.
\end{remark}
\begin{remark}
	Let us also remark that in the statement of Theorem~\ref{t1} and Corollary~\ref{cor1}, the technical condition $V_1|_{O}=V_2|_{O}=0$ may be slightly weakened. Indeed, it suffices to assume that $V_1|_{O}=V_2|_{O}$ and that there exists a nonempty open set $\omega \subset O$ such that $V_1,V_2$ both vanish on $\omega$. This will become clear following the proofs.
\end{remark}

We will now present two of the main ingredients of our proof of Theorem~\ref{t1}, namely an entanglement principle and a variant of the Gel'fand inverse spectral problem. We believe that both of these results may find independent applications.

\subsection{Entanglement principle for nonlocal equations}

The (weak) unique continuation principle for linear partial differential operators can generally be stated as follows (under suitable regularity of coefficients and solutions):
\begin{itemize}
\item[{\textbf{(UCP)}}] If $Pu = 0$ on some connected open set $\Omega'$ and additionally $u$ vanishes on an open subset $\Omega \subset \Omega'$, then $u$ must also vanish on $\Omega'$.
\end{itemize}
Typical examples of operators for which this principle applies include second-order elliptic and parabolic operators with sufficiently regular coefficients, see \cite{Koch_Tataru_2001, Lerner2, Lin}.

One of the striking features of (some) nonlocal operators is the global (weak) unique continuation principles enjoyed by their solutions, which can be stated as follows (under suitable regularity of coefficients and solutions):

\begin{itemize}
	\item[{\textbf{$\text{(UCP)}^\prime$}}] If $Pu = 0$ on some open set $\Omega$ and additionally $u$ vanishes on $\Omega$, then $u$ must vanish globally.
\end{itemize}
It is clear that such unique continuation principles fail for local operators. For instance, considering $0 \ne u \in C^\infty_0(\mathbb{R}^n)$, both $u$ and $\Delta u$ vanish on a large open set while $u$ does not vanish.

On the other hand, for the fractional Laplacian $P = (-\Delta)^\alpha$ with $\alpha \in (0,1)$ on $\mathbb{R}^n$, $\textrm{(UCP)}^\prime$ is known to be true under milder assumptions on the regularity of the solution $u$; see \cite{GSU}. We refer to \cite{Riesz} for a classical result on this; see also \cite{FF14, Ruland_2015, Yu, Ghosh_Uhlmann_2021}. The $\textrm{(UCP)}^\prime$ was further extended in \cite{Ghosh_Lin_Xiao_2017} to the case of the fractional Laplace--Beltrami operator $P = (-\Delta_g)^\alpha$, $\alpha \in (0,1)$, on $\mathbb{R}^n$ with a smooth Riemannian metric. The $\textrm{(UCP)}^\prime$ for the higher-order fractional Laplacian $(- \Delta)^\alpha$ when $\alpha \in (-n/2, \infty) \setminus \mathbb{Z}$ on $\mathbb{R}^n$ also holds; see \cite{CMR_2022}. We mention that the proofs of these $\textrm{(UCP)}^\prime$ results rely on the Caffarelli--Silvestre extension \cite{CS} and Carleman estimates from \cite{Ruland_2015}.

Let us also emphasize that the literature on inverse problems for nonlocal equations dealing with the unique recovery of lower-order coefficients have so far heavily relied on $\textrm{(UCP)}^\prime$. This is essentially due to a density argument that follows from $\textrm{(UCP)}^\prime$, as introduced first in the work of \cite{GSU}, where the authors uniquely recover an unknown compactly supported potential $V$ from the exterior data of solutions to $(- \Delta)^\alpha u + V u = 0$ on a domain $\Omega \subset \mathbb{R}^n$. It is the same principle that allowed the authors in \cite{Ghosh_Lin_Xiao_2017} to recover the zeroth-order coefficient $V$ from the Cauchy data set of solutions to $(- \Delta_g)^\alpha u + V u = 0$ over some domain $\Omega \subset \mathbb{R}^n$. Interestingly, the results in \cite{Fei21, FGKU, CO, Ruland_2023}, which deal with the recovery of leading-order coefficients in nonlocal equations of the form $(- \Delta_g)^\alpha u = 0$ on a compact Riemannian manifold $M$, are based on different techniques and do not use $(\textrm{UCP})^\prime$.

In this paper, as part of our solution to (IP) on closed manifolds, we discover yet another striking feature of nonlocal equations that can be viewed as an even stronger version of $\textrm{(UCP)}^\prime$. The principle essentially states that if different fractional powers of the Laplace--Beltrami operator acting on several distinct functions, which vanish on some open set $O$, are known to be linearly dependent on $O$, then all the functions must be zero. Precisely, we prove the following theorem in Section~\ref{sec_ent}. Here, the fractional powers of the Laplace--Beltrami operator are defined using the functional calculus for self-adjoint operators.

\begin{theorem}
	\label{thm_ent}
Let $(M,g)$ be a smooth closed and connected Riemannian manifold of dimension $n \geq 2$, and let $O \subset M$ be a nonempty open set. Let $N \in \mathbb{N}$, and let $\{v_j\}_{j=1}^N \subset C^\infty(M)$ satisfy
\begin{equation}\label{v_12_O} 
v_1|_O = \ldots = v_N|_O = 0 \quad \text{and} \quad \sum_{j=1}^N b_j((-\Delta_g)^{\alpha_j} v_j)|_O = 0,
\end{equation}
for some $b_1, \ldots, b_N\in \C\setminus\{0\}$, and some $\{\alpha_j\}_{j=1}^N \subset (0, \infty) \setminus \mathbb{N}$ that additionally satisfy
\begin{equation}\label{diff_int} 
\alpha_j - \alpha_k \notin \mathbb{Z} \quad \text{for all } j,k = 1, \ldots, N \text{ with } j \neq k.
\end{equation}
Then $v_j = 0$ on $M$ for all $j = 1, \ldots, N$.
\end{theorem}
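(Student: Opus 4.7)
The plan is to argue by induction on $N$, with a Carlson-type analytic interpolation at the heart of the argument. First I would absorb the integer parts of the exponents: writing $\alpha_j = m_j + \beta_j$ with $m_j \in \{0,1,2,\ldots\}$ and $\beta_j \in (0,1)$, the assumption $\alpha_j - \alpha_k \notin \mathbb{Z}$ makes the $\beta_j$ pairwise distinct. Setting $\tilde v_j = (-\Delta_g)^{m_j} v_j$, locality of the differential operator $(-\Delta_g)^{m_j}$ preserves the vanishing $\tilde v_j|_O = 0$, and the hypothesis becomes $\sum_j b_j((-\Delta_g)^{\beta_j} \tilde v_j)|_O = 0$. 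Proving the theorem for the $\tilde v_j$ with $\beta_j \in (0,1)$ and using that the kernel of $(-\Delta_g)^{m_j}$ on the closed manifold $M$ consists of constants will recover $v_j \equiv 0$ from $v_j|_O = 0$.

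For the inductive step, assume the statement for $N-1$ and consider the entire $C^\infty(M)$-valued function
\[
F(z) = \sum_{j=1}^N b_j (-\Delta_g)^{\alpha_j + z} v_j, \qquad z \in \mathbb{C},
\]
defined via the functional calculus. By hypothesis $F(0)|_O = 0$; moreover $F(m) = (-\Delta_g)^m F(0)$ by commutativity of the spectral functional calculus, so locality of the differential operator $(-\Delta_g)^m$ gives $F(m)|_O = 0$ for every nonnegative integer $m$. Pairing against any $\chi \in C_c^\infty(O)$, the scalar function $h_\chi(z) = \langle F(z), \chi\rangle_{L^2(M)}$ is entire in $z$ and vanishes at each nonnegative integer. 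Crucially, because $(-\Delta_g)^{iy}$ is unitary on $L^2(M)$, one has $|h_\chi(x+iy)| \leq C(x)$ uniformly in $y$; that is, $h_\chi$ is bounded on every vertical line in $\mathbb C$. A sharp Carlson-type interpolation theorem then forces $h_\chi \equiv 0$, and varying $\chi$ over $C_c^\infty(O)$ yields $F(z)|_O = 0$ for all $z \in \mathbb C$.

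Reordering so that $\alpha_1 = \min_j \alpha_j$ and evaluating at $z = -\alpha_1$, the $j = 1$ term becomes $b_1 v_1|_O = 0$, leaving
\[
\sum_{j=2}^N b_j((-\Delta_g)^{\alpha_j - \alpha_1} v_j)|_O = 0.
\]
The shifted exponents $\alpha_j - \alpha_1$ are positive, non-integer, and remain pairwise non-congruent modulo $\mathbb{Z}$, so the inductive hypothesis yields $v_2 = \cdots = v_N \equiv 0$. Substituting back leaves $v_1|_O = ((-\Delta_g)^{\alpha_1} v_1)|_O = 0$, i.e., the base case $N = 1$. Re-running the Carlson argument gives $((-\Delta_g)^{\alpha_1 + z} v_1)|_O = 0$ for every $z$; decomposing $v_1 = \sum_\ell w_\ell$ into the distinct $\mu_\ell$-eigenspaces of $-\Delta_g$ produces the generalized Dirichlet identity $\sum_\ell \mu_\ell^{\alpha_1 + z} w_\ell|_O = 0$ in $z$, whose uniqueness (the $\mu_\ell$ being distinct positive reals) forces each $w_\ell|_O = 0$. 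Elliptic unique continuation for the second-order operator $-\Delta_g$ on the connected manifold $M$ then gives $w_\ell \equiv 0$, hence $v_1 \equiv 0$.

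The main obstacle is the Carlson/interpolation step. For generic smooth $v_j$ the real-direction growth of $\|(-\Delta_g)^{x} v_j\|$ can exceed any exponential rate in $x$, so the textbook version of Carlson's theorem does not apply verbatim. What must be deployed is a refined interpolation theorem exploiting the uniform vertical-line bound (furnished by the unitarity of imaginary powers) together with the precise nature of the horizontal growth for spectral sums of smooth functions on a closed manifold---this is the \emph{sharp interpolation results for holomorphic functions} flagged in the abstract.
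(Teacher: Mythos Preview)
Your plan is the right shape---reduce to $\alpha_j\in(0,1)$, build a holomorphic family vanishing at the nonnegative integers, invoke a Carlson-type theorem---and matches the paper's philosophy. The gap is precisely where you flag it, and it is not closed by any off-the-shelf ``refined'' interpolation result. The sharp theorem the paper actually uses is Pila's (their Theorem~3.1), which needs $c+\gamma<1$ where $\gamma$ controls $\limsup_{|y|\to\infty}\frac{\log|h(iy)|}{\pi|y|}$ and $c$ controls $\limsup_{x\to\infty}\frac{\log|h(x)|}{2x\log x}$. Unitarity of $(-\Delta_g)^{iy}$ gives only boundedness of your $h_\chi$ on vertical lines, so $\gamma=0$. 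For the real axis you give no bound; without one, the growth of $\|(-\Delta_g)^{x}v_j\|$ for merely smooth $v_j$ can exceed $e^{Ax\log x}$ for every $A$. The only mechanism that tames this is the disjoint-support structure $v_j|_O=0$, $\operatorname{supp}\chi\subset O$ fed through the Gaussian heat-kernel bound, and if you carry that out you find $|h_\chi(x)|\lesssim e^{2x\log x}$, i.e.\ $c\le 1$. Thus $c+\gamma\le 1$, the borderline case, and Pila does \emph{not} apply to $h_\chi$.

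The paper's resolution is not a stronger interpolation theorem but a different holomorphic function. Starting from $F(m)|_O=0$, they pass to the heat semigroup, integrate by parts $m$ times, and arrive at $\sum_j\Gamma(m+1+\alpha_j)\int_0^\infty f_j(t)\,t^{-m}\,dt=0$ for all $m\ge 1$, where $f_j(t)=c_j\,(e^{t\Delta_g}\Delta_g v_j)(x)\,t^{-1-\alpha_j}$ at a fixed $x\in\omega\Subset O$; the Gaussian off-diagonal decay of the heat kernel (distance from $\omega$ to $M\setminus O$) gives $|f_j(t)|\lesssim e^{-\delta/t}$ for $t$ small and the spectral gap gives $|f_j(t)|\lesssim e^{-\beta t}$ for $t$ large. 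They then apply Pila to $h(z)=\sum_j\Gamma(z+1+\alpha_j)\,F_j(z)$ with $F_j(z)=\int_0^\infty f_j(t)\,t^{-z}\,dt$. The $F_j$ are bounded on the imaginary axis while the Gamma factors contribute $e^{-\pi|y|/2}$ there, so $\gamma=-\tfrac12$; on the real axis both $\Gamma(x+1+\alpha_j)$ and $F_j(x)$ grow like $e^{x\log x}$, giving $c=1$. Now $c+\gamma=\tfrac12<1$ and Pila yields $h\equiv 0$ on the whole plane minus the Gamma poles. Since those poles $z=-k-1-\alpha_j$ are simple and disjoint for distinct $j$ exactly because $\alpha_j-\alpha_k\notin\mathbb{Z}$, residue calculus isolates each $\int_0^\infty f_j(t)\,t^{k}\,dt=0$ for all $k\ge 0$, hence $f_j\equiv 0$, hence $(e^{t\Delta_g}\Delta_g v_j)(x)=0$ on $(0,\infty)\times\omega$; heat-equation unique continuation and then elliptic unique continuation finish. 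Your induction via evaluation at $z=-\alpha_1$ is thus replaced by a one-shot residue separation, but that difference is cosmetic; the essential missing idea is the Gamma weighting that buys the $-\tfrac12$ in $\gamma$.
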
 

We remark that the case $N=1$ is equivalent to establishing $\textrm{(UCP)}^\prime$ on a closed manifold, but the cases $N \geq 2$ can be seen as a stronger version of $\textrm{(UCP)}^\prime$ that simultaneously involves several functions. We call this the entanglement principle for nonlocal operators. 

\begin{remark}
In the case where $N \ge 2$, our proof of Theorem \ref{thm_ent} relies crucially on the compactness of the manifold $M$, which implies that the heat semigroup $e^{t\Delta_g}$, $t> 0$, when acting on the orthogonal complement of the one-dimensional subspace of $L^2(M)$ spanned by the constant function $1$, exhibits exponential decay for large times; see the bound \eqref{heat_kernel_bound_2} below. 
\end{remark}

\begin{remark}
Let us also emphasize that the condition \eqref{diff_int} in the theorem is sharp. Indeed, consider the case $N=2$ with $\alpha_1 = \alpha \in (0,\infty) \setminus \mathbb{N}$ and $\alpha_2 = k + \alpha$ for some $k \in \mathbb{N}$. Then, given any nonzero function $v \in C^\infty(M)$ satisfying $v|_O = 0$, it is trivial to see that for the nonzero functions $v_1 = (-\Delta_g)^k v$ and $v_2 = v$, there holds
\[
(-\Delta_g)^{\alpha_1} v_1 - (-\Delta_g)^{\alpha_2} v_2 = 0 \quad \text{on } M.
\]
Such counterexamples can be generalized to various values of $N \geq 2$.
\end{remark}

Our proof of Theorem~\ref{thm_ent} will essentially be based on two key ingredients. One ingredient is the representation of fractional operators in terms of heat semigroups, together with precise bounds on the heat semigroup and unique continuation principles for heat equations. Another key ingredient is related to interpolation properties for holomorphic functions subject to certain growth rates at infinity. It would be interesting to see whether an alternative proof of Theorem~\ref{thm_ent} can be obtained via the Caffarelli--Silvestre extension procedure.

\subsection{Variant of Gel'fand inverse spectral problem} 

Inverse spectral problems have a long and distinguished history, dating back to the classical works \cite{Borg_1946, Levinson_1949, Gelfand_Levitan_1951}, see also \cite{Gelfand}. For Schr\"odinger operators on bounded domains in Euclidean space, the inverse spectral problem of recovering a potential from the knowledge of the Dirichlet eigenvalues and the boundary traces of the normal derivatives of the normalized Dirichlet eigenfunctions was solved in \cite{Nachman_Sylveser_Uhlmann_88} and \cite{Nov_1988}.

A more general version of Gel'fand's inverse spectral problem is formulated on a Riemannian manifold, with or without boundary. This problem concerns the determination not only of coefficients of partial differential operators but also of the Riemannian manifold itself from some knowledge of its {\em normalized} spectral data.  For example, as studied in \cite{BKL, KrKaLa, HLOS_2018}, in the context of a closed manifold, the spectral data can be formulated as eigenvalues and restrictions to the set $O$ of an $L^2(M)$-orthonormal basis consisting of eigenfunctions of $-\Delta_g$ on $M$. We also refer to \cite{AKKLT, HLOS_2018, HLYZ_2020, Kurylev_Lassas_Yamaguchi, FILLN} for further works in the context of inverse interior spectral problems for wave and heat equations.

Such inverse spectral problems play a major role in the study of inverse problems for evolution equations with the so-called active measurements, where one is allowed to perform various experiments and observe the data. For example, in geophysical exploration, an important problem is to uniquely determine the internal structure of the Earth by sending artificial packets of seismic or electromagnetic waves of a fixed frequency into the Earth and subsequently measuring the reflected waves that arrive back on its surface. This is an inverse problem where the governing equation is the wave equation. Similar inverse problems are studied for heat equations, Schr\"odinger equations, and other evolutionary PDEs. In fact, all these problems have been shown to be equivalent \cite{KKLM}.

Gel'fand's inverse spectral problem for the acoustic operator $-c^2(x)\Delta$ was first solved in the pioneering work of Belishev \cite{Bel1} on domains in $\mathbb{R}^n$, $n \geq 2$, and subsequently by Belishev and Kurylev for the Laplace--Beltrami operator on general Riemannian manifolds \cite{BK92}.  The solution relies on the construction of a suitable control theory from the boundary for solutions to the wave equation that is commonly known as the Boundary Control (BC) method. The BC method also fundamentally relies on a sharp unique continuation theorem for wave equations that was developed by Tataru \cite{Tataru} (see also the important precursor by Robbiano \cite{R1}, and related later results by Robbiano and Zuily \cite{RZ} and Tataru \cite{Tataru2}). We refer the reader to \cite{Bel2, KKL} for an exposition of the BC method.

The inverse problem (IP) studied in this manuscript can be reduced, without imposing any geometric assumption, to a variant of Gel'fand's inverse spectral problem on $(M,g)$. Specifically, as we will show in Proposition~\ref{prop1}, the knowledge of $\mathcal{C}_{M,g,V}^O$ uniquely determines the spectral data consisting of eigenvalues and restrictions to the set $O$ of an $L^2(M)$ Schauder basis consisting of eigenfunctions of $-\Delta_g$ on $M$. The only difference here is that the eigenfunctions are not necessarily orthonormal. This poses a significant change in the problem as the solution of Gel'fand's inverse spectral problem fundamentally relies on its immediate reduction to inverse problems for wave equations. When the eigenfunctions are not normalized, such a connection is not known. In this paper, we provide a resolution of the problem under the geometric assumption (H). Specifically, we prove the following theorem in Section~\ref{sec_Gelfand}.

\begin{theorem}
	\label{t2}
	For $j=1,2$, let $(M_j,g_j)$ be a smooth closed and connected Riemannian manifold of dimension $n \geq 2$. Let $O \subset M_1 \cap M_2$ be a nonempty open set such that $M_j\setminus\overline{O}\ne \emptyset$, and assume that (H) is satisfied for $j=1,2$. Assume also that $g_1|_{O} = g_2|_{O}$. For $j=1,2$, suppose that there exists an $L^2(M_j)$ Schauder basis consisting of eigenfunctions $\{\psi_k^{(j)}\}_{k=0}^{\infty} \subset C^{\infty}(M_j)$ for $-\Delta_{g_j}$ on $(M_j,g_j)$ corresponding to (not necessarily distinct) eigenvalues 
\[0 = \mu_0^{(j)} < \mu_1^{(j)} \leq \mu_2^{(j)} \leq \mu_3^{(j)} \leq \ldots\]
such that, given any $k=0,1,2,\ldots$, there holds
\begin{equation}\label{psi_eq} \mu_k^{(1)} = \mu_k^{(2)} \quad \text{and} \quad \psi_k^{(1)}(x) = \psi_k^{(2)}(x) \quad \forall \, x \in O. \end{equation} 
Then, there exists a smooth diffeomorphism $\Phi: M_1 \to M_2$ that is identity on $O$ such that $g_1 = \Phi^\star g_2$ on $M_1$.
\end{theorem}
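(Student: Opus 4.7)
The plan is to reduce Theorem~\ref{t2} to a standard Gel'fand inverse spectral problem with $L^2(M_j)$-orthonormal eigenfunctions, and then invoke the boundary control (BC) method adapted to interior observations, in the spirit of \cite{BK92, KKL, HLOS_2018}. The central obstruction is that within each eigenspace $E_\mu^{(j)}$ the Schauder basis $\{\psi_k^{(j)}\}_{\mu_k = \mu}$ need not be $L^2(M_j)$-orthonormal, so the Gram matrix $G_\mu^{(j)} = (\langle \psi_k^{(j)}, \psi_l^{(j)}\rangle_{L^2(M_j)})_{\mu_k = \mu_l = \mu}$ is a priori unknown and could differ between $(M_1, g_1)$ and $(M_2, g_2)$. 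Consequently, the wave kernel
\[
W_j(t, x, y) = \sum_\mu \cos(\sqrt{\mu}\, t)\, K_\mu^{(j)}(x, y), \qquad K_\mu^{(j)}(x, y) = \sum_{\mu_k = \mu_l = \mu} (G_\mu^{(j)})^{-1}_{kl}\, \psi_k^{(j)}(x)\, \overline{\psi_l^{(j)}(y)},
\]
which fuels the BC machinery, is not immediately computable from $\{\mu_k, \psi_k^{(j)}|_O\}$.

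To overcome this, I would exploit hypothesis (H). Nontrappingness of $(M_j \setminus O, g_j)$ together with the antipodal condition $\mathcal{A}_{M_j, g_j}(p) \subset O$ should furnish a geometric control condition for waves observed from $O$, which at the spectral level should deliver a local control / observability estimate for eigenfunctions of the sort that appears in the semiclassical propagation literature on nontrapping and quantum chaotic geometries. The mechanism I envisage is the following. For $f \in C_c^\infty(O)$, the Schauder expansion $f = \sum_k c_k^{(j)} \psi_k^{(j)}$ converges in $L^2(M_j)$, and since $f$ vanishes on $M_j \setminus O$ this yields the global consistency relation $\sum_k c_k^{(j)}\, \psi_k^{(j)}|_{M_j \setminus O} = 0$. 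Combined with the observability of eigenfunctions from $O$, I expect this to pin down the coefficients $c_k^{(j)}$, and hence $(G_\mu^{(j)})^{-1}$, purely in terms of the known data $\{\mu_k, \psi_k^{(j)}|_O\}$. This would force $K_\mu^{(1)} = K_\mu^{(2)}$ on $O \times O$ for every eigenvalue $\mu$, and hence $W_1 = W_2$ on $(0, T) \times O \times O$ for every $T > 0$.

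Once the wave kernels agree on $O \times O$, I would apply the BC reconstruction adapted to interior data. Tataru's sharp unique continuation theorem \cite{Tataru} allows one to construct focusing sequences of wave solutions that localize at arbitrary points $q \in M_j \setminus O$, reachable from $O$ by the nontrapping condition, and Blagoveshchenskii-type identities then determine travel-time coordinates and the metric inner product at each such point. This yields the smooth diffeomorphism $\Phi: M_1 \to M_2$ with $\Phi|_O = \mathrm{id}$ and $\Phi^\star g_2 = g_1$. The main obstacle, and the genuinely new ingredient, is the spectral reduction step of the previous paragraph: extracting the Gram matrices from non-normalized Schauder data under hypothesis (H) is precisely where the themes of ``control of eigenfunctions'', ``antipodal sets'', and ``quantum chaos'' from the introduction come into play. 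By contrast, the final BC reconstruction is essentially classical in spirit, although its concrete execution with interior-only observations still requires care.
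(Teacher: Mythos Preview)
Your high-level plan is correct and matches the paper's: reduce to an orthonormal Gel'fand problem, then invoke the interior BC method from \cite{KrKaLa, HLOS_2018}. You have also correctly identified the central obstruction (the unknown Gram matrices within each eigenspace) and that the hypothesis (H) is the tool for removing it. However, the mechanism you propose for the reduction step is too vague to constitute a proof, and in places misidentifies how the ingredients of (H) are used.

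Concretely: your plan is to take $f\in C_c^\infty(O)$, write its Schauder expansion $f=\sum_k c_k^{(j)}\psi_k^{(j)}$, and use the constraint that the series vanishes on $M_j\setminus O$ together with eigenfunction observability to ``pin down'' the $c_k^{(j)}$ and hence the Gram matrices. But the Schauder coefficients are computed via the \emph{dual} basis, which already encodes the unknown Gram matrix; you cannot get at the $c_k^{(j)}$ without it, so this is circular. Observability from $O$ gives a norm bound, not an identity, and does not by itself determine $G_\mu^{(j)}$. Moreover, you conflate the roles of the two halves of (H): the nontrapping condition alone yields the geometric control condition and hence the eigenfunction observability estimate $\|\phi\|_{L^2(M_j)}\le C\|\phi\|_{L^2(O)}$; the antipodal condition $\mathcal A_{M_j,g_j}(p)\subset O$ plays a completely separate role and is \emph{not} part of the observability argument.

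The paper's actual argument runs as follows. First apply Gram--Schmidt to $\{\psi_k^{(1)}\}$, performing the identical linear operations on $\{\psi_k^{(2)}\}$; this makes $\{\psi_k^{(1)}\}$ orthonormal on $M_1$ while preserving \eqref{psi_eq}, and the task becomes to show that $\{\psi_k^{(2)}\}$ is orthonormal on $M_2$. Now take $f_1\in C_c^\infty(\mathcal V)$ with $\mathcal V$ a small ball around the antipodal point $q^{(1)}\in\mathcal A_{M_1,g_1}(p)\subset O$. Since $\{\psi_k^{(1)}\}$ is orthonormal, the coefficients $a_k=(f_1,\psi_k^{(1)})_{L^2(\mathcal V)}$ are computable from the data. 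Transplant to $M_2$ by setting $f_2=\sum_k a_k\psi_k^{(2)}$; the nontrapping observability bound is what guarantees this series converges in every $H^s(M_2)$. The transplanted cosine-wave solutions $U_j(t,x)=\sum_k a_k\cos(\sqrt{\mu_k}\,t)\psi_k^{(j)}(x)$ agree on $\R\times O$. By finite speed of propagation, $U_1$ vanishes on $[-T,T]\times\mathcal W$, where $\mathcal W$ is a small ball around $p$ and $T$ is essentially $\mathrm{dist}_{g_1}(p,q^{(1)})$, i.e., the \emph{diameter} of $M_1$. Hence $U_2$ vanishes there too, and Tataru's unique continuation forces $f_2$ to vanish on the ball of radius $T$ about $p$ in $M_2$. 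Because $T$ is (at least) the diameter of $M_2$ and the antipodal set of $p$ lies in $O$, this ball covers all of $M_2\setminus O$; thus $f_2$ is supported in $O$ and in fact equals $f_1$ there. Reading this back as an expansion identity $f=\sum_k (f,\psi_k^{(2)})_{L^2(\mathcal V)}\psi_k^{(2)}$ on $M_2$ for all $f\in C_c^\infty(\mathcal V)$, and testing against eigenfunctions, forces $\{\psi_k^{(2)}\}$ to be orthonormal. Note that Tataru's theorem is the crux of the \emph{reduction} step, not merely of the final BC reconstruction as you suggest; and the antipodal hypothesis is precisely what ensures the available propagation time $T$ suffices for Tataru's cone to sweep out all of $M_2\setminus O$.
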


The crux of the proof of Theorem~\ref{t2} is showing that under the assumption (H), the non-normalized spectral data determines the normalized spectral data. This is accomplished via establishing a hidden connection between non-normalized spectral data and wave equations. To accomplish this, we will fundamentally rely on the sharp unique continuation result of Tataru applied to antipodal sets, as well as requiring global control of energy of eigenfunctions based on their local energy in $O$. The latter step is in particular responsible for us imposing the nontrapping assumption on $M\setminus O$ that is related to the controllability results of Bardos, Lebeau, and Rauch \cite{BLR0,BLR}. We mention that this nontrapping condition in (H) can be relaxed by assuming that the following more abstract condition is satisfied,
\begin{itemize}
	\item [{\bf (C)}]{There exists a constant $C>0$ depending on $(M,g)$ and $O$ such that given any eigenfunction $\phi$ of $-\Delta_g$ on $M$, there holds:
	$$ \|\phi\|_{L^2(M)} \leq C \|\phi\|_{L^2(O)}.$$}
\end{itemize}
Such local energy controls for eigenfunctions have been an active topic of research for many years due to their applications in control theory problems for evolutionary equations, as well as connections to quantum chaos, see e.g. \cite{DJN}. In particular, let us mention that the assumption (C) above is satisfied in dimension two on any closed Riemannian surface whose geodesic flow satisfies the Anosov property, see \cite[Theorem 1]{DJN}. For examples of higher dimensional closed Riemannian manifolds that admit some trapping of geodesics on the set $M\setminus O$ and still satisfy the condition (C), we refer the reader to the work of Anantharaman--Rivi\`ere \cite{AR}.

We close this section by mentioning that the idea of requiring control on the local energy of eigenfunctions has appeared before in the literature of inverse problems (albeit for different reasons), see e.g. \cite{LNOY,LO1,LO2} for the study of inverse problems for waves subject to disjoint data measurements. The paper \cite{LO1} uses controls for eigenfunctions on Riemannian manifolds with boundary, stemming from the work of Hassell and Tao in \cite{HT}.

\subsection{Previous literature}\label{sec_literature}
The study of the fractional Calder\'on problem was initiated in \cite{GSU} where the authors considered the inverse problem of recovering an unknown potential in the fractional Schr\"odinger equation $(-\Delta)^\alpha u + Vu=0$ on a bounded domain in the Euclidean space given the knowledge of exterior Cauchy data measurements analogous to the data \eqref{DN_map}. Following this work, there have been an extensive literature dealing with recovery of lower order coefficients as well as isotropic leading order coefficients for fractional elliptic equations, see for example \cite{Ghosh_Ruland_Salo_Uhlmann_2020}, \cite{Ghosh_Lin_Xiao_2017}, \cite{Ruland_Salo_2020}, \cite{Ruland_Salo_2018}, \cite{BGU}, \cite{Ch}, \cite{CLR}, \cite{Co1}, \cite{CGR}, \cite{CMRU}, \cite{Li_Li_2020}, \cite{Li_Li_2021},  \cite{Ruland_2021}, and \cite{BCR} for some of the important contributions. 

Recently, there have been uniqueness results for recovery of anisotropic leading order coefficients appearing in nonlocal equations that are of the specific form $L^\alpha u=0$ with $L$ a nonnegative self-adjoint elliptic differential operator, based on two fundamentally different approaches yielding different types of results. The works \cite{Ghosh_Uhlmann_2021,CGRU} introduced an approach for the study of inverse problems for fractional powers of self-adjoint elliptic operators, $L^\alpha$, that is based on reducing the inverse problem to an inverse problem for the local equation $Lu=0$. This has allowed the authors to obtain certain uniqueness results for the nonlocal problem by resorting to known uniqueness results for inverse problems associated to the local equation $Lu=0$ in the literature. We also mention the recent works \cite{Lin23, LLU} that applied an analogous idea in the study of fractional parabolic operators.

An altogether different approach was discovered in the recent works \cite{Fei21,FGKU} for solving (IP) under the assumption that $V$ is identically zero. The work \cite{Fei21} solves the inverse problem under a certain Gevrey analyticity assumption on the observation set $O$, and is based on a reduction from (IP) (with $V=0$) to the Gel'fand inverse spectral problem. The work \cite{FGKU} gives a complete resolution to this inverse problem (again assuming that $V$ is identically zero) without imposing any geometric assumptions on the manifold. The idea in \cite{FGKU} is to use the fact that the local operator $-\Delta_g$ commutes with $(-\Delta_g)^\alpha$ to obtain the heat semigroup $e^{t\Delta_g}(x,y)$, with $t> 0$ and $x,y \in O$ and subsequently uses Kannai's trasmutation formula \cite{Ka} to reduce the inverse problem to a well understood inverse problem for waves \cite{BK92,HLOS_2018}. The subsequent work \cite{CO} obtains a similar uniqueness result for noncompact complete Riemannian manifolds and the work \cite{Ruland_2023} provides a different perspective to the approach of \cite{Fei21,FGKU} via the Caffarelli--Silvestre extension for nonlocal equations. We mention also the works \cite{Chien,QU} for similar results obtained in the case of connection Laplacian and the Dirac operator on closed manifolds. 

In all the previous works on recovery of leading order coefficients, the fact that the operator is of the form $L^\alpha$ plays a crucial role, owing to the fact that $L^\alpha$ and $L$ commute, thanks to the calculus of powers of self-adjoint operators. Extension of such methods to operators of the form $L^\alpha +V$ does not seem to work and new ideas are needed. To the best of our knowledge, the problem of recovering both the manifold $(M,g)$ and lower order coefficients has remained open until now, with Theorem~\ref{t1} providing a first uniqueness result in this direction, under the assumption (H). We close this section by emphasizing that any future improvements to Theorem~\ref{t2} (in the sense of weakening the assumption (H)) will yield immediate improvements to Theorem~\ref{t1}. This is due to the fact that the condition (H) is only needed in our proof of Theorem~\ref{t2}, namely the variant of Gel'fand's inverse spectral problem. 

\subsection{Organization of the paper}
We begin the paper by fixing some notations in Section~\ref{sec_pre} and discussing the forward problem for the nonlocal Schr\"odinger equation. In Section~\ref{sec_ent}, we present the proof of the entanglement principle using ideas from complex analysis. We will also state and prove Lemma~\ref{density_lemma}, which will be needed in the subsequent section. Section~\ref{sec_reduction_to_spectral_data} focuses on showing that the Cauchy data set $\mathcal{C}_{M,g,V}^O$ uniquely determines the restrictions on the set $O$ of a non-normalized Schauder basis consisting of eigenfunctions of $(M,g)$ associated with $-\Delta_g$. This reduces the question of recovering the geometry $(M,g)$ to a variant of the Gel'fand inverse spectral problem. We present the proof of this variant in Section~\ref{sec_Gelfand}, emphasizing that this is the only step of the proof that imposes condition (H). We address the recovery of the lower-order coefficient $V$ in Section~\ref{sec_proofs} and present the proof of Corollary~\ref{cor1}. Finally, in Appendix~\ref{app_obstruction}, we discuss an obstruction to uniqueness for the inverse problem (IP).

\section{Preliminaries}\label{sec_pre}

Let $(M, g)$ be a smooth closed and connected Riemannian manifold of dimension $n \ge 2$. Let $-\Delta_g$ be the positive Laplace--Beltrami operator on $M$. This operator is self-adjoint on $L^2(M)$, with the domain $\mathcal{D}(-\Delta_g) = H^2(M)$, the standard Sobolev space on $M$. We denote by
$$0 = \lambda_0 < \lambda_1 < \lambda_2 < \ldots$$ 
the distinct eigenvalues of the Laplace--Beltrami operator $-\Delta_g$, and let $d_k$ be the multiplicity of the eigenvalue $\lambda_k$ for $k = 0, 1, 2, \ldots$. Let $\phi_{k,1}, \ldots, \phi_{k,d_k}$ be an $L^2(M)$-orthonormal basis for the eigenspace $\textrm{Ker}(-\Delta_g - \lambda_k)$ corresponding to $\lambda_k$. Finally, given any $k = 0, 1, \ldots$, we define $\pi_k : L^2(M) \to \textrm{Ker}(-\Delta_g - \lambda_k)$ to be the orthogonal projection operator onto the eigenspace of $\lambda_k$, defined via
\begin{equation}
	\label{proj_op}
	\pi_k f = \sum_{\ell=1}^{d_k} (f, \phi_{k,\ell})_{L^2(M)} \, \phi_{k,\ell},\quad f\in L^2(M),
\end{equation}
where $(\cdot, \cdot)_{L^2(M)}$ is the $L^2$ inner product on $M$.

Let $\alpha > 0$. By the spectral theorem, we define the fractional Laplacian $(-\Delta_g)^\alpha$ of order $\alpha$ as an unbounded self-adjoint operator on $L^2(M)$ given by
\begin{equation}
	\label{alpha_laplace}
	(-\Delta_g)^{\alpha}u = \sum_{k=0}^{\infty} \lambda_k^{\alpha} \, \pi_k u, 
\end{equation}
equipped with the domain $\mathcal{D}((-\Delta_g)^\alpha) := \left\{ u \in L^2(M) : \sum_{k=0}^\infty \lambda_k^{2\alpha} \|\pi_k u\|_{L^2(M)}^2 < \infty \right\} = H^{2\alpha}(M)$; see \cite[page 25]{Taylor_book_1981}.

From now on, let us assume that $\alpha \in (0,1)$. Let $f \in C^\infty(M)$ be such that 
$(f,1)_{L^2(M)} = 0$.
Then the equation 
\begin{equation}
\label{eq_int_2}
(-\Delta_g)^\alpha u = f \quad \text{in} \quad M
\end{equation}
has a unique solution $u = u^f \in C^\infty(M)$ with the property that $(u^f, 1)_{L^2(M)} = 0$, given by 
\[
u^f = (-\Delta_g)^{-\alpha} f = \sum_{k=1}^\infty \lambda_k^{-\alpha} \pi_k f,
\]
see \cite[Corollary 2.39]{McLean_book}.

It is useful to express the self-adjoint operator $(-\Delta_g)^{-\alpha}$ in terms of the heat semigroup $e^{t\Delta_g}$, $t \ge 0$, when acting on the orthogonal complement of the one-dimensional subspace of $L^2(M)$ spanned by the constant function $1$. To achieve this, we use the definition of the Gamma function:
\[
\Gamma(\alpha) = \int_0^\infty e^{-t} t^{\alpha-1} \, dt,
\]
which gives us
\begin{equation}
	\label{eq_2_1}
	a^{-\alpha} = \frac{1}{\Gamma(\alpha)} \int_0^\infty e^{-at} \frac{1}{t^{1-\alpha}} \, dt, \quad a > 0.
\end{equation}
From \eqref{eq_2_1}, it follows that 
\begin{equation}
	\label{frac_laplace_heat_inverse}
	(-\Delta_g)^{-\alpha} v = \frac{1}{\Gamma(\alpha)} \int_0^\infty e^{t\Delta_g} v \frac{1}{t^{1-\alpha}} \, dt,
\end{equation}
where $v \in L^2(M)$ satisfies $(v, 1)_{L^2(M)} = 0$. We note that the integral in \eqref{frac_laplace_heat_inverse} converges in $L^2(M)$ due to the estimate
\begin{equation}
\label{heat_kernel_bound_2}
\|e^{t\Delta_g} v\|_{L^2(M)} \le e^{-\beta t} \|v\|_{L^2(M)}, \quad t \ge 0,
\end{equation}
for some $\beta > 0$. 
The estimate \eqref{heat_kernel_bound_2} follows from the expansion $e^{t\Delta_g} v = \sum_{k=1}^\infty e^{-t\lambda_k} \pi_k v$ and the fact that $0 < \lambda_1 < \lambda_2 < \cdots$.

Recalling that $\alpha \in (0,1)$, one derives from \eqref{eq_2_1} the following identity:
\[
a^{\alpha} = \frac{1}{\Gamma(-\alpha)} \int_0^\infty (e^{-at} - 1) \frac{dt}{t^{1+\alpha}},
\]
valid for all $a \ge 0$. In view of this identity, and by the functional calculus, we get 
\begin{equation}
\label{frac_laplace_heat}
(-\Delta_g)^\alpha u = \frac{1}{\Gamma(-\alpha)} \int_0^\infty (e^{t\Delta_g} - 1)u \frac{dt}{t^{1+\alpha}},
\end{equation}
for $u \in \mathcal{D}(-\Delta_g) = H^2(M)$; see \cite{Caffarelli_Stinga_2016}. The integral on the right-hand side of \eqref{frac_laplace_heat} converges in $L^2(M)$.

For future reference, we state the following pointwise upper Gaussian estimate on the heat kernel $e^{t\Delta_g}(x,y) \in C^\infty((0,\infty) \times M \times M)$; see \cite[Theorem 4.7, page 171]{SY} and also \cite{Gr, Va}:
\begin{equation}
	\label{Gauss_heat}
	|e^{t\Delta_{g}}(x,y)| \leq C\, t^{-\frac{n}{2}}\, e^{-c \frac{(\mathrm{dist}_{g}(x,y))^2}{t}}, \quad t \in (0,1), \quad x,y \in M,
\end{equation}
where $c>0$ and $C>0$, and $\mathrm{dist}_{g}(\cdot, \cdot)$ denotes the Riemannian distance on $(M,g)$.

We shall also need the following equivalence of norms in the Sobolev space $H^s(M)$, $s \in \mathbb{R}$; see \cite[Section 4.2.3, page 103]{Sogge_book_eigenfunctions}:
\begin{equation}
\label{eq_prelim_Sobolev_norms}
c_s \|(I-\Delta_g)^{s/2} u\|_{L^2(M)} \le \|u\|_{H^s(M)} \le C_s \|(I-\Delta_g)^{s/2} u\|_{L^2(M)}, \quad C_s, c_s > 0.
\end{equation}
Here, the Bessel potential $(I-\Delta_g)^{s/2}$ is defined by the self-adjoint functional calculus. 

For future reference, we record the following fact. When $u \in C^\infty(M)$, we have $e^{t\Delta_g} u \in C^\infty([0,\infty); H^s(M))$ for any $s \in \mathbb{R}$, implying $e^{t\Delta_g}  u \in C^\infty([0,\infty);C^\infty(M))$. Fixing $s > n/2$ such that $s/2 \in \mathbb{N}$, by Sobolev's embedding theorem and \eqref{heat_kernel_bound_2}, we get for $t > 0$,
\begin{equation}
\label{eq_100_-1}
\begin{aligned}
\|e^{t\Delta_g} \Delta_g u \|_{L^\infty(M)} \leq C \|e^{t\Delta_g} \Delta_g u \|_{H^s(M)} 
&\leq C \|e^{t\Delta_g} (1 - \Delta_g)^{s/2} \Delta_g u \|_{L^2(M)} \\
&\leq C e^{-\beta t} \|(1 - \Delta_g)^{s/2} \Delta_g u \|_{L^2(M)}.
\end{aligned}
\end{equation}

For further reference, we note that the following bound holds for $t > 0$,
\begin{equation}
\label{eq_800_1}
\|e^{t\Delta_g}\|_{L^\infty(M) \to L^\infty(M)} \le 1,
\end{equation}
see \cite[Theorem 3.5]{St}.

We shall denote by $\Psi^\mu_{\text{cl}}(M)$ the space of classical pseudodifferential operators of order $\mu$ on $M$. According to Seeley's theorem, we have $(-\Delta_g)^\alpha \in \Psi^{2\alpha}_{\text{cl}}(M)$; see \cite{Seeley_1966}. Consequently, $(-\Delta_g)^\alpha: C^\infty(M) \to C^\infty(M)$, and $(-\Delta_g)^\alpha: H^s(M) \to H^{s-2\alpha}(M)$ is bounded for all $s \in \mathbb{R}$; see \cite[Chapter 7, Section 10]{Taylor_book_vol_II} or \cite[Theorem 8.5, page 206]{Grubb_book}.

We shall also need the following result, which states that when $u \in C^\infty(M)$, the equality \eqref{frac_laplace_heat} holds pointwise.
\begin{proposition}
Let $\alpha \in (0,1)$ and let $u \in C^\infty(M)$. Then we have the following pointwise formula:
\begin{equation}
\label{frac_laplace_heat_pointwise}
((-\Delta_g)^\alpha u)(x) = \frac{1}{\Gamma(-\alpha)} \int_0^\infty ((e^{t\Delta_g}u)(x) - u(x)) \frac{dt}{t^{1+\alpha}}, \quad \text{for all } x \in M.
\end{equation}
\end{proposition}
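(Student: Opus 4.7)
The plan is to show that the integral on the right-hand side of \eqref{frac_laplace_heat_pointwise} converges absolutely and uniformly for $x \in M$, defining a continuous function on $M$, and then to identify this continuous function with $(-\Delta_g)^\alpha u \in C^\infty(M)$ using the already-established $L^2$ identity \eqref{frac_laplace_heat}. Once both sides are known to be continuous and to agree almost everywhere on $M$, they must coincide pointwise.

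First, I would handle the near-singularity $t \in (0,1]$. Since $u \in C^\infty(M)$, the fundamental theorem of calculus gives
\[
e^{t\Delta_g}u - u = \int_0^t e^{s\Delta_g}\Delta_g u\, ds,
\]
and combining this with the $L^\infty$-contraction bound \eqref{eq_800_1}, $\|e^{s\Delta_g}\|_{L^\infty(M) \to L^\infty(M)} \leq 1$, yields $\|e^{t\Delta_g}u - u\|_{L^\infty(M)} \leq t\,\|\Delta_g u\|_{L^\infty(M)}$. Consequently,
\[
\int_0^1 \|e^{t\Delta_g}u - u\|_{L^\infty(M)}\,\frac{dt}{t^{1+\alpha}} \leq \|\Delta_g u\|_{L^\infty(M)} \int_0^1 t^{-\alpha}\, dt < \infty,
\]
since $\alpha \in (0,1)$.

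For the tail $t \in [1,\infty)$, the integrand is not automatically small, so I would project out the constants. Setting $\bar u := \mathrm{vol}_g(M)^{-1}(u,1)_{L^2(M)}$ and $w := u - \bar u$, the identity $e^{t\Delta_g}\bar u = \bar u$ gives $e^{t\Delta_g}u - u = e^{t\Delta_g}w - w$. Since $w \perp 1$ in $L^2(M)$, the same is true of $(I-\Delta_g)^{s/2}w$ for every $s \in \mathbb{R}$ (because $(I-\Delta_g)^{s/2}$ preserves the orthogonal complement of the zero eigenspace), so \eqref{heat_kernel_bound_2} applies. Replicating the argument of \eqref{eq_100_-1} with $\Delta_g u$ replaced by $w$, fixing $s > n/2$ with $s/2 \in \mathbb{N}$, and using Sobolev embedding together with the commutativity of $(I-\Delta_g)^{s/2}$ and $e^{t\Delta_g}$, I would obtain
\[
\|e^{t\Delta_g}w\|_{L^\infty(M)} \leq C\, e^{-\beta t}\,\|(I-\Delta_g)^{s/2}w\|_{L^2(M)},\qquad t \geq 0,
\]
which is plainly integrable against $t^{-1-\alpha}\, dt$ on $[1,\infty)$, as is the $w$ term itself (a constant in $t$).

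With these two estimates, the truncated integrals $\int_\varepsilon^N (e^{t\Delta_g}u - u)(x)\,t^{-1-\alpha}\,dt$ converge in $L^\infty(M)$ as $\varepsilon\to 0^+$ and $N\to\infty$; each truncated integral is continuous in $x$ since $t \mapsto e^{t\Delta_g}u \in C([0,\infty); C(M))$, so the uniform limit is continuous on $M$. Uniform convergence also implies $L^2$-convergence, and the $L^2$-limit equals $\Gamma(-\alpha)\cdot(-\Delta_g)^\alpha u$ by \eqref{frac_laplace_heat}. Since $(-\Delta_g)^\alpha u \in C^\infty(M)$ by Seeley's theorem, two continuous functions on $M$ agreeing almost everywhere must be equal, which gives \eqref{frac_laplace_heat_pointwise}. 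I do not anticipate any serious obstacle; the only subtle point is the projection onto the mean-zero subspace, which is precisely what makes the exponential decay \eqref{heat_kernel_bound_2} usable at the level of $L^\infty$ via Sobolev embedding.
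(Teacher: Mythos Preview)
Your proof is correct and follows the same overall strategy as the paper: establish that the right-hand side defines a continuous function on $M$, then invoke the $L^2$ identity \eqref{frac_laplace_heat} and Seeley's theorem to conclude. The short-time estimate via the fundamental theorem of calculus and the $L^\infty$-contraction bound is exactly what the paper does.

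The one noteworthy difference is your treatment of the tail $t\in[1,\infty)$. You project onto the mean-zero subspace and invoke the exponential decay \eqref{heat_kernel_bound_2} together with Sobolev embedding to control $\|e^{t\Delta_g}w\|_{L^\infty}$. This is correct but unnecessary: since $\alpha>0$, the weight $t^{-1-\alpha}$ is already integrable on $[1,\infty)$, so the trivial bound $\|e^{t\Delta_g}u-u\|_{L^\infty(M)}\le 2\|u\|_{L^\infty(M)}$ (from \eqref{eq_800_1}) suffices. The paper takes this simpler route. Your remark that ``the integrand is not automatically small'' is true but beside the point---it only needs to be bounded.
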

\begin{proof}
First, we claim that for all $x \in M$, the function
\begin{equation}
\label{eq_800_2}
t \mapsto \frac{(e^{t\Delta_g}u)(x) - u(x)}{t^{1+\alpha}} \in L^1((0, \infty)).
\end{equation}
Indeed, for $t > 1$ and $x \in M$, in view of \eqref{eq_800_1}, we get
\begin{equation}
\label{eq_800_3}
|(e^{t\Delta_g}u)(x) - u(x)| \le 2\|u\|_{L^\infty(M)}.
\end{equation}
When $0 < t < 1$ and $x \in M$, recalling that $e^{t\Delta_g} u \in C^\infty([0, \infty); C^\infty(M))$ and using the fundamental theorem of calculus, we get
\begin{equation}
\label{eq_800_4}
(e^{t\Delta_g}u)(x) - u(x) = \int_0^t (e^{s\Delta_g}\Delta_g u)(x) \, ds.
\end{equation}
Using \eqref{eq_800_1}, we obtain from \eqref{eq_800_4} that
\begin{equation}
\label{eq_800_5}
|(e^{t\Delta_g}u)(x) - u(x)| \le t \|\Delta_g u\|_{L^\infty(M)}.
\end{equation}
The claim \eqref{eq_800_2} follows from \eqref{eq_800_3} and \eqref{eq_800_5}.

Now, since $u \in \mathcal{D}(-\Delta_g)$, by the functional calculus, \eqref{frac_laplace_heat_pointwise} holds in $L^2(M)$, see \eqref{frac_laplace_heat}. The function on the left-hand side of \eqref{frac_laplace_heat_pointwise} is in $C^\infty(M)$, and the function on the right-hand side of \eqref{frac_laplace_heat_pointwise} is in $C(M)$. The latter can be seen by the dominated convergence theorem relying on  \eqref{eq_800_2},  \eqref{eq_800_3}, and \eqref{eq_800_5}. Hence, \eqref{frac_laplace_heat_pointwise} holds pointwise for all $x \in M$.
\end{proof}

We will next state a classical result for the solvability of the inhomogeneous fractional Schrödinger equation \eqref{pf} below, which demonstrates that the Cauchy data set $\mathcal C^O_{M,g,V}$, defined in \eqref{DN_map}, possesses a rich structure. The proof of this result follows, for example, by applying \cite[Theorem 2.34]{McLean_book}. For the sake of completeness and the reader's convenience, we present a short proof based on the Lax–Milgram lemma.
\begin{proposition}
	\label{prop_direct}
	Let $V \in C^{\infty}(M)$. Then the operator $(-\Delta_g)^\alpha + V: H^\alpha(M) \to H^{-\alpha}(M)$ is Fredholm of index zero, and hence, the Fredholm alternative holds for the equation
\begin{equation}
\label{pf}
P_{M,g,V}\, u := (-\Delta_g)^{\alpha} u + V u = f \qquad \text{on} \quad M.
\end{equation}
Specifically, letting
\[
\mathcal{K}_{M,g,V} := \{ u \in H^{\alpha}(M) : P_{M,g,V}\, u = 0 \quad \text{on} \quad M \},
\]
there are two mutually exclusive possibilities:
\begin{itemize}
\item[(i)] $\mathcal{K}_{M,g,V} = \{0\}$. In this case, for each $f \in H^{-\alpha}(M)$, the inhomogeneous equation \eqref{pf} has a unique solution $u \in H^\alpha(M)$.
\item[(ii)] $\dim(\mathcal{K}_{M,g,V}) = N$, $1 \le N < \infty$. In this case, given $f \in H^{-\alpha}(M)$, the inhomogeneous equation \eqref{pf} is solvable if and only if $(f, v)_{H^{-\alpha}(M), H^\alpha(M)} = 0$ for all $v \in \mathcal{K}_{M,g,\overline{V}}$.
\end{itemize}
\end{proposition}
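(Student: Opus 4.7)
The plan is to realize $P_{M,g,V}$ as a compact perturbation of an invertible operator from $H^\alpha(M)$ to $H^{-\alpha}(M)$ via the Lax--Milgram lemma, and then read off the dichotomy from the standard Fredholm alternative. First I would verify boundedness: by Seeley's theorem $(-\Delta_g)^\alpha\in\Psi^{2\alpha}_{\mathrm{cl}}(M)$ maps $H^\alpha(M)$ continuously into $H^{-\alpha}(M)$, while multiplication by $V$ maps $H^\alpha(M)$ into itself and hence into $H^{-\alpha}(M)$ through the natural embedding, so $P_{M,g,V}:H^\alpha(M)\to H^{-\alpha}(M)$ is continuous.

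Next, on $H^\alpha(M)\times H^\alpha(M)$ I would introduce the sesquilinear form
\begin{equation*}
B_\lambda(u,v) := ((-\Delta_g)^{\alpha/2} u, (-\Delta_g)^{\alpha/2} v)_{L^2(M)} + ((V+\lambda)u, v)_{L^2(M)},
\end{equation*}
associated with $P_{M,g,V}+\lambda I$. Continuity of $B_\lambda$ on $H^\alpha(M)\times H^\alpha(M)$ follows from Cauchy--Schwarz, and for $\lambda > \|V\|_{L^\infty(M)}$,
\begin{equation*}
\operatorname{Re} B_\lambda(u,u) \ge \|(-\Delta_g)^{\alpha/2} u\|_{L^2(M)}^2 + (\lambda - \|V\|_{L^\infty(M)}) \|u\|_{L^2(M)}^2 \gtrsim \|u\|_{H^\alpha(M)}^2,
\end{equation*}
using the norm equivalence $\|u\|_{H^\alpha(M)}^2 \sim \|(-\Delta_g)^{\alpha/2} u\|_{L^2(M)}^2 + \|u\|_{L^2(M)}^2$, a spectral consequence of \eqref{eq_prelim_Sobolev_norms} through the elementary estimate $1+\lambda_k^\alpha \sim (1+\lambda_k)^\alpha$ on eigenvalues. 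The Lax--Milgram lemma then produces a continuous inverse for $P_{M,g,V}+\lambda I: H^\alpha(M)\to H^{-\alpha}(M)$.

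Writing $P_{M,g,V} = (P_{M,g,V}+\lambda I) - \lambda I$, the perturbation $\lambda I: H^\alpha(M)\to H^{-\alpha}(M)$ factors through the compact Rellich embedding $H^\alpha(M)\hookrightarrow H^{-\alpha}(M)$ on the closed manifold $M$ and is therefore compact. Thus $P_{M,g,V}$ is a compact perturbation of an isomorphism, and hence Fredholm of index zero; the Fredholm alternative immediately yields the dichotomy (i)--(ii). For the solvability condition in (ii), I would identify the adjoint of $P_{M,g,V}$ with respect to the $L^2$-duality between $H^\alpha(M)$ and $H^{-\alpha}(M)$: since $(-\Delta_g)^\alpha$ is self-adjoint and $V$ acts by multiplication, this adjoint is $P_{M,g,\overline V}$, so the range of $P_{M,g,V}$ consists precisely of those $f\in H^{-\alpha}(M)$ that annihilate $\mathcal K_{M,g,\overline V}$. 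The only step I expect to require some care is the coercivity estimate, specifically the norm equivalence above; the remaining ingredients (Seeley, Rellich, Fredholm alternative, and identification of the formal adjoint) are standard and uniform in $\alpha\in(0,1)$.
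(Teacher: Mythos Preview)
Your proposal is correct and follows essentially the same route as the paper: Lax--Milgram gives invertibility of a shifted operator (the paper takes $(-\Delta_g)^\alpha+1$ and treats multiplication by $V-1$ as the compact perturbation, while you take $(-\Delta_g)^\alpha+V+\lambda$ with $\lambda>\|V\|_{L^\infty}$ and treat $-\lambda I$ as compact via Rellich), after which Fredholm index zero and the alternative follow at once. Your explicit identification of the adjoint as $P_{M,g,\overline V}$ is a welcome addition that the paper leaves implicit.
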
 

\begin{proof}
First, we will demonstrate that the operator $(-\Delta_g)^{\alpha} + 1 : H^{\alpha}(M) \to H^{-\alpha}(M)$ is invertible. To this end, let $u, v \in C^\infty(M)$ and consider the sesquilinear form associated with this operator:
\[
a(u,v) = ((-\Delta_g)^\alpha u, v)_{L^2(M)} + (u, v)_{L^2(M)} = ((-\Delta_g)^{\alpha/2} u, (-\Delta_g)^{\alpha/2} v)_{L^2(M)} + (u, v)_{L^2(M)}.
\]
The second equality follows from functional calculus. Using the Cauchy–Schwarz inequality and \cite[Theorem 4.4]{St} regarding the equivalence of Sobolev norms, we obtain
\begin{align*}
|a(u,v)| &\leq \|(-\Delta_g)^{\alpha/2} u\|_{L^2(M)} \|(-\Delta_g)^{\alpha/2} v\|_{L^2(M)} + \|u\|_{L^2(M)} \|v\|_{L^2(M)} \\
&\leq C \|u\|_{H^\alpha(M)} \|v\|_{H^\alpha(M)},
\end{align*}
for all $u, v \in C^\infty(M)$. Thus, the form $(u, v) \mapsto a(u, v)$ extends uniquely to a continuous sesquilinear form on $H^\alpha(M) \times H^\alpha(M)$. The form $a$ is coercive on $H^\alpha(M)$, since
\[
a(u, u) = \|(-\Delta_g)^{\alpha/2} u\|^2_{L^2(M)} + \|u\|^2_{L^2(M)} \geq c \| u\|^2_{H^\alpha(M)},
\]
for all $u \in H^\alpha(M)$ thanks to the equivalence of the Sobolev norms, see \cite[Theorem 4.4]{St}. Here, $c > 0$. By the Lax–Milgram lemma, the operator $(-\Delta_g)^{\alpha} + 1 : H^{\alpha}(M) \to H^{-\alpha}(M)$ is invertible.

The operator of multiplication by $(V - 1) \in C^\infty(M)$ is compact from $H^{\alpha}(M)$ to $H^{-\alpha}(M)$, see \cite[Theorems 2.3.6 and 2.3.1]{Agranovich_book}. Hence, the operator $(-\Delta_g)^{\alpha} + V : H^\alpha(M) \to H^{-\alpha}(M)$ is Fredholm of index zero, and the result follows.
\end{proof}

\begin{remark}
\label{rem_direct}
As $(-\Delta_g)^\alpha \in \Psi^{2\alpha}_{\text{cl}}(M)$ and $V \in C^\infty(M)$, by elliptic regularity, we see that $\mathcal{K}_{M,g,V} \subset C^\infty(M)$. Furthermore, if $f\in C^\infty(M)$, then any solution $u$ to \eqref{pf} is in $C^\infty(M)$; see  \cite[Chapter 7, Section 10]{Taylor_book_vol_II}.

\end{remark}

For future reference, we shall also record the following standard fact, which is a consequence of Proposition \ref{prop_direct} and Remark \ref{rem_direct}, together with the orthogonal decomposition $H^\alpha(M) = \mathcal{K}_{M,g,V} \oplus (\mathcal{K}_{M,g,V})^\perp$.
\begin{proposition}
		\label{prop_direct_2}
Assume that $\dim(\mathcal{K}_{M,g,V}) = N$, with $1 \le N < \infty$. Then, given any
\[
    f \in \mathcal{H}_{M,g,V} := \{ f \in C^{\infty}(M) : (f, v)_{L^2(M)} = 0 \quad \forall\, v \in \mathcal{K}_{M,g,\overline{V}} \},
\]
the equation \eqref{pf} admits a unique solution $u \in C^{\infty}(M)$ subject to the additional orthogonality condition
\begin{equation}\label{ortho_cond_S}
    (u, v)_{L^2(M)} = 0 \quad \forall\, v \in \mathcal{K}_{M,g,V}.
\end{equation}
\end{proposition}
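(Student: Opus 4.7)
The plan is to combine the Fredholm alternative from Proposition \ref{prop_direct}, the elliptic regularity from Remark \ref{rem_direct}, and a standard finite-dimensional orthogonal projection argument.

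\textbf{Step 1: Existence of a smooth solution.} Let $f \in \mathcal{H}_{M,g,V}$. Since $f \in C^\infty(M) \subset H^{-\alpha}(M)$ and for every $v \in \mathcal{K}_{M,g,\overline{V}} \subset C^\infty(M) \subset H^{\alpha}(M)$, the duality pairing $(f,v)_{H^{-\alpha}(M), H^{\alpha}(M)}$ coincides with the $L^2(M)$ inner product $(f,v)_{L^2(M)}$, which vanishes by definition of $\mathcal{H}_{M,g,V}$. Hence, by Proposition \ref{prop_direct}(ii), there exists $u_0 \in H^\alpha(M)$ such that $P_{M,g,V} u_0 = f$. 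By the elliptic regularity statement in Remark \ref{rem_direct}, since $f \in C^\infty(M)$, we conclude that $u_0 \in C^\infty(M)$.

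\textbf{Step 2: Imposing the orthogonality condition.} Since $\mathcal{K}_{M,g,V}$ is finite dimensional and contained in $C^\infty(M)$, pick an $L^2(M)$-orthonormal basis $\{w_1, \ldots, w_N\}$ of $\mathcal{K}_{M,g,V}$. Define
\begin{equation*}
    u := u_0 - \sum_{j=1}^N (u_0, w_j)_{L^2(M)}\, w_j.
\end{equation*}
Then $u \in C^\infty(M)$, and since each $w_j \in \mathcal{K}_{M,g,V}$ satisfies $P_{M,g,V} w_j = 0$, we have $P_{M,g,V} u = P_{M,g,V} u_0 = f$. By construction, $(u, w_j)_{L^2(M)} = 0$ for all $j = 1, \ldots, N$, which is exactly \eqref{ortho_cond_S} since $\{w_j\}$ spans $\mathcal{K}_{M,g,V}$.

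\textbf{Step 3: Uniqueness.} Suppose $u_1, u_2 \in C^\infty(M)$ both solve \eqref{pf} and both satisfy \eqref{ortho_cond_S}. Then $w := u_1 - u_2$ satisfies $P_{M,g,V} w = 0$, so $w \in \mathcal{K}_{M,g,V}$. On the other hand, $(w, v)_{L^2(M)} = 0$ for every $v \in \mathcal{K}_{M,g,V}$, so in particular $(w, w)_{L^2(M)} = 0$, giving $w = 0$.

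There is no real obstacle here; the only mild subtlety is the identification of the duality pairing $(f, v)_{H^{-\alpha}(M), H^\alpha(M)}$ with the $L^2(M)$ inner product when $f \in L^2(M)$, which is standard and follows from the density of $C^\infty(M)$ in $H^\alpha(M)$ and the self-adjoint functional calculus used to define the Sobolev scale on $M$.
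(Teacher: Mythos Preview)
Your proof is correct and follows exactly the approach indicated in the paper, which simply notes that the result is a consequence of Proposition~\ref{prop_direct}, Remark~\ref{rem_direct}, and the orthogonal decomposition $H^\alpha(M) = \mathcal{K}_{M,g,V} \oplus (\mathcal{K}_{M,g,V})^\perp$. You have just written out those three ingredients explicitly in Steps~1--3.
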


In view of the above proposition, let us state the following definition.
\begin{definition}\label{def_S} 
Let $O \subset M$ be an open, nonempty set such that $M\setminus \overline O\neq \emptyset$. Assume first that $\dim(\mathcal{K}_{M,g,V}) = N$, with $1 \leq N < \infty$. Define
\begin{equation}
    \label{def_H}
    \mathcal{H}^O_{M,g,V} := \{ f \in C^{\infty}_0(O) : (f, v)_{L^2(M)} = 0 \quad \forall v \in \mathcal{K}_{M,g,\overline{V}} \},
\end{equation}
and for each $f \in \mathcal{H}^O_{M,g,V}$, let $u = S_{M,g,V}(f) \in C^{\infty}(M)$ be the unique solution to \eqref{pf} subject to the orthogonality condition \eqref{ortho_cond_S}.

In the case when $\mathcal{K}_{M,g,V} = \{0\}$, we define $\mathcal{H}^O_{M,g,V} = C^{\infty}_0(O)$, and also $u = S_{M,g,V}(f) \in C^{\infty}(M)$ to be the unique solution to \eqref{pf}.
\end{definition}

\begin{remark}
\label{rem_structure_H_set}
Assume that $\dim(\mathcal{K}_{M,g,V}) = N$, where $1 \le N < \infty$. Let $\zeta_1, \dots, \zeta_N \in \mathcal{K}_{M,g,\overline{V}}$ be a collection of linearly independent functions.  Then by Lemma \ref{lem_analog_5_2} below, there are functions $\{\theta_k\}_{k=1}^N \subset C^\infty_0(O)$ such that $(\theta_k, \zeta_l)_{L^2(O)} = \delta_{lk}$ for all $k, l = 1, \dots, N$. Here $\delta_{lk}$ is the Kronecker delta function. Thus, letting $f \in C_0^\infty(O)$ be arbitrary and setting
$$
\tilde{f} = f - \sum_{j=1}^N (f, \zeta_j)_{L^2(O)} \theta_j,
$$
we observe that $\tilde{f} \in \mathcal H^O_{M,g,V}$.
\end{remark}

\section{Entanglement principle for nonlocal equations}
\label{sec_ent}

The main goal of this section is to prove the entanglement principle discussed in Theorem~\ref{thm_ent}. We recall that the main content of the theorem lies in the case $N\geq 2$, as the case $N=1$ is equivalent to the unique continuation principle for the fractional Laplacian. Since we have not been able to locate a proof of the unique continuation principle for the fractional Laplacian on closed manifolds, we mention that the simplified version of our proof provides this,  see Remark~\ref{rem_UCP_N_1} below.

The proof of Theorem~\ref{thm_ent} will rely on some sharp interpolation properties for holomorphic functions in the complex plane that is in the same spirit as the theorem of Carlson in complex analysis \cite{Carlson}. The version that we need here is due to Pila \cite{Pila} based on the asymptotic behaviour of the Gamma function. Theorem~\ref{thm_ent} follows from a combination of such interpolation properties for holomorphic functions combined with unique continuation principles for heat equations. We will first start with the following proposition. 

\begin{proposition}
	\label{prop_interpolate}
		Let $N\in \N$ and let $\{\alpha_j\}_{j=1}^N \subset (0,\infty)$ satisfy \eqref{diff_int}. Assume that  $\{f_j\}_{j=1}^N \subset C^{\infty}((0,\infty))$ and that there exists  constants $c>0$ and $\delta>0$ such that for each $j=1,\ldots,N$, the function $f=f_j$ satisfies the bounds
	\begin{equation}
		\label{f_bounds}
		|f (t)| \leq c\, e^{-\delta t}, \quad  t\in [1,\infty), \quad \text{and} \quad |f(t)|\leq c\,e^{-\,\frac{\delta}{t}}, \quad t \in (0,1].
		\end{equation}
If furthermore there exists $l\in \N$ such that 
	\begin{equation}
		\label{f_density}
		\sum_{j=1}^N \Gamma(m+1+\alpha_j)\, \int_0^{\infty} f_j(t)\, t^{-m}\,dt=0 \quad \text{for all $m=l,l+1,l+2,\dots$},
	\end{equation}
then $f_j(t)=0$ for all $t\in (0,\infty)$ and all $j=1,\ldots,N$.
\end{proposition}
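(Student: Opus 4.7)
The plan is to convert the discrete vanishing condition \eqref{f_density} at integers $m\ge l$ into vanishing of a single meromorphic function on all of $\C$ via a Carlson--Pila-type interpolation theorem, and then disentangle the individual $f_j$ using the non-integrality hypothesis \eqref{diff_int}.

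First, I would introduce the Mellin-type transforms
\[
A_j(z) := \int_0^\infty f_j(t)\,t^{-z}\,dt, \qquad j=1,\ldots,N,
\]
and form
\[
G(z) := \sum_{j=1}^N \Gamma(z+1+\alpha_j)\,A_j(z).
\]
The super-exponential decay of $f_j$ near $t=0$ (after the substitution $u=1/t$) and the exponential decay at $t=\infty$ coming from \eqref{f_bounds} show that each $A_j$ is entire, and since $|t^{-z}|=t^{-\mathrm{Re}\,z}$, the bound on $|A_j(x+iy)|$ is even uniform in $y$. In particular $G$ is meromorphic on $\C$. Thanks to \eqref{diff_int}, the pole loci $\{-1-\alpha_j-k : k=0,1,2,\ldots\}$ contributed by the Gamma factors are pairwise disjoint for distinct $j$; hence $G$ has at worst simple poles there, with residue $\tfrac{(-1)^k}{k!}A_j(-1-\alpha_j-k)$ at $z=-1-\alpha_j-k$.

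Next, I would establish growth bounds on $G$. Stirling's asymptotics give $|\Gamma(x+iy+1+\alpha_j)|$ decaying like $(1+|y|)^{x+\alpha_j+1/2}\,e^{-\pi|y|/2}$ in $y$ while growing like $\Gamma(x+1+\alpha_j)$ in $x$, and the elementary estimates above show $|A_j(x+iy)|$ bounded uniformly in $y$ by a factor of order $\Gamma(|x|+1)/\delta^{|x|}$ (coming from $u=1/t$ in the near-origin part when $x\to+\infty$ and from the exponential decay at infinity when $x\to-\infty$). Multiplying, $G$ has Gamma-type growth in the horizontal direction and exponential decay on vertical lines, placing it exactly within the growth class covered by the Carlson-type interpolation theorem of Pila \cite{Pila}. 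Since \eqref{f_density} says precisely that $G(m)=0$ for every integer $m\ge l$, and these points lie away from all candidate poles, that theorem forces $G\equiv 0$ as a meromorphic function. Taking residues at $z=-1-\alpha_j-k$ and using pole disjointness, we obtain $A_j(-1-\alpha_j-k)=0$, equivalently
\[
\int_0^\infty f_j(t)\,t^{1+\alpha_j+k}\,dt = 0 \qquad \text{for every } j=1,\ldots,N \text{ and } k = 0,1,2,\ldots.
\]
Setting $h_j(t) := t^{1+\alpha_j}f_j(t)$, this says every moment of $h_j$ vanishes. Since $h_j$ inherits exponential decay at infinity, its Laplace transform $\mathcal L h_j(s) = \int_0^\infty h_j(t)e^{-st}\,dt$ is holomorphic in some half-plane $\mathrm{Re}\,s > -\delta'$ with Taylor coefficients at $s=0$ proportional to these moments, so $\mathcal L h_j\equiv 0$; injectivity of the Laplace transform then yields $h_j\equiv 0$, hence $f_j\equiv 0$.

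The main obstacle is the growth-estimate step: one must verify that the horizontal growth of each $A_j$ (driven by the pairing of $t^{-z}$ with $f_j$ near $t=0$) is cancelled precisely enough by the Stirling asymptotics of $\Gamma(z+1+\alpha_j)$ so that $G$ meets the hypotheses of Pila's interpolation theorem. This is where the \emph{sharp} form of the decay $|f_j(t)|\le ce^{-\delta/t}$ in \eqref{f_bounds}, rather than any milder bound, becomes essential, as it is the matching profile to the Gamma function's Stirling growth.
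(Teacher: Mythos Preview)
Your proposal is correct and follows essentially the same route as the paper: build the meromorphic function $G(z)=\sum_j \Gamma(z+1+\alpha_j)\,A_j(z)$ from the Mellin-type transforms $A_j$, verify the Pila growth hypotheses (the paper does this via two lemmas giving exactly the Stirling-type bounds you describe, yielding $c=1$, $\gamma=-1/2$), conclude $G\equiv 0$, read off the vanishing of $A_j(-1-\alpha_j-k)$ from the disjoint simple poles, and finish with the Laplace-transform moment argument. The only cosmetic difference is that the paper explicitly separates the step of analytically continuing $G$ from the right half-plane (where Pila applies) to all of $\C\setminus\{\text{poles}\}$ before taking residues, which you leave implicit.
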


Before presenting the proof of the proposition above, for the reader's convenience, let us state Pila's theorem, see \cite{Pila}.
\begin{theorem}
\label{thm_Pila}
Let $c,\gamma\in \R$ with $c+\gamma<1$  and let $\delta>0$. Write $z=x+iy$ and suppose that $h(z)$ is holomorphic in the region $x\ge 0$, satisfying:
\begin{itemize}
	 \item[(i)]{$\limsup_{|y|\to \infty}\frac{\log|h(iy)|}{\pi |y|}\le \gamma$,}
	\item[(ii)]{$\limsup_{x\to \infty}\frac{\log|h(x)|}{2x\log x}\le c$,} 
	\item[(iii)]{$\log|h(z)|=\mathcal{O}(|z|^{2-\delta})$, throughout $x\ge 0$, as $|z|\to\infty$}. 
	\end{itemize}
Suppose that $h(m)=0$ for all $m=0,1,2,\dots$. Then $h$ vanishes identically on the set $\{z=x+iy\in\C: x\ge 0\}$. 	
\end{theorem}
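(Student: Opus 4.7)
The plan is to apply Pila's theorem (Theorem~\ref{thm_Pila}) to a single auxiliary holomorphic function that simultaneously encodes all of the integral-moment conditions \eqref{f_density}. For each $j=1,\dots,N$, set
\begin{equation*}
I_j(z) \;:=\; \int_0^\infty f_j(t)\, t^{-z-l}\,dt.
\end{equation*}
The double-exponential bounds \eqref{f_bounds} --- crucially $|f_j(t)|\le c e^{-\delta/t}$ as $t\to 0^+$, which kills any singular behavior at large $\operatorname{Re}(z)$, together with $|f_j(t)|\le c e^{-\delta t}$ at infinity --- show that $I_j$ is an \emph{entire} function of $z$. I would then define
\begin{equation*}
h(z) \;:=\; \sum_{j=1}^N \Gamma(z+l+1+\alpha_j)\, I_j(z).
\end{equation*}
Since $\alpha_j>0$ and $l\in\mathbb{N}$, the Gamma factors have no poles in $\{\operatorname{Re}(z)\ge 0\}$, so $h$ is holomorphic there. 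Writing $m'=m+l$, hypothesis \eqref{f_density} translates precisely into $h(m)=0$ for every $m=0,1,2,\dots$.

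Next I would verify the three growth conditions of Pila's theorem. Standard Stirling asymptotics give $|\Gamma(iy+c)|\asymp |y|^{c-1/2} e^{-\pi|y|/2}$ as $|y|\to\infty$, while $|I_j(iy)|\le \int_0^\infty |f_j(t)|\,t^{-l}\,dt<\infty$ is uniformly bounded in $y$. Hence
\begin{equation*}
\limsup_{|y|\to\infty}\,\frac{\log|h(iy)|}{\pi|y|}\;\le\;-\tfrac12\;=:\;\gamma.
\end{equation*}
On the positive real axis, the substitution $s=1/t$ reduces $\int_0^1 e^{-\delta/t} t^{-x-l}\,dt$ to the incomplete-Gamma-type integral $\int_1^\infty e^{-\delta s} s^{x+l-2}\,ds\lesssim \Gamma(x+l-1)/\delta^{x+l-1}$, so Stirling yields $\log|I_j(x)|\sim x\log x$; combined with $\log\Gamma(x+l+1+\alpha_j)\sim x\log x$, this gives
\begin{equation*}
\limsup_{x\to\infty}\,\frac{\log|h(x)|}{2x\log x}\;\le\;1\;=:\;c.
\end{equation*}
The same bounds applied throughout $\operatorname{Re}(z)\ge 0$ yield $\log|h(z)|=O(|z|\log|z|)=O(|z|^{3/2})$, fulfilling Pila's condition (iii) with $\delta_{\text{Pila}}=\tfrac12$. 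Since $c+\gamma\le\tfrac12<1$, Theorem~\ref{thm_Pila} yields $h\equiv 0$ on the closed right half-plane.

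Because each $I_j$ is entire, $h$ extends meromorphically to all of $\mathbb{C}$, and the identical vanishing on an open set forces $h\equiv 0$ globally as a meromorphic function. Here the non-integer-difference hypothesis \eqref{diff_int} plays its decisive role: the poles of $\Gamma(z+l+1+\alpha_j)$, located at $z=-l-1-\alpha_j-k$ for $k=0,1,2,\dots$, are pairwise disjoint across $j$ (a coincidence would force $\alpha_j-\alpha_{j'}\in\mathbb{Z}$). Thus at each such pole only a single term of $h$ is singular, and reading off the residue gives
\begin{equation*}
I_j(-l-1-\alpha_j-k) \;=\; \int_0^\infty f_j(t)\, t^{1+\alpha_j+k}\,dt \;=\; 0
\qquad (j=1,\dots,N,\; k=0,1,2,\dots).
\end{equation*}
Finally, I would conclude by a standard moment-problem argument: with $g_j(t):=f_j(t)\,t^{1+\alpha_j}$, the bounds \eqref{f_bounds} make the Laplace transform $\mathcal{L}g_j(s)=\int_0^\infty g_j(t)e^{-st}\,dt$ holomorphic on the connected half-plane $\{\operatorname{Re}(s)>-\delta\}$, and differentiation under the integral gives $\mathcal{L}g_j^{(k)}(0)=(-1)^k\int_0^\infty g_j(t) t^k\,dt=0$ for every $k\ge 0$. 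Hence every Taylor coefficient of $\mathcal{L}g_j$ at the origin vanishes, $\mathcal{L}g_j\equiv 0$ by analyticity, and $f_j\equiv 0$ by injectivity of the Laplace transform. The main obstacle I expect is the delicate growth analysis in the second step: the exponential decay of $\Gamma$ on the imaginary axis must beat the Stirling-type growth of both $\Gamma$ and $I_j$ on the real axis with enough margin to secure $c+\gamma<1$, and it is precisely the double-exponential decay $e^{-\delta/t}$ near $t=0$ that both makes $I_j$ entire and keeps $|I_j(iy)|$ bounded, providing the required slack.
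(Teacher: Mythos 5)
Your proposal does not prove the statement in question. You were asked for a proof of Theorem~\ref{thm_Pila} (Pila's interpolation theorem for holomorphic functions of limited growth in a half-plane vanishing on the nonnegative integers), but your argument opens with \emph{``The plan is to apply Pila's theorem (Theorem~\ref{thm_Pila}) to a single auxiliary holomorphic function\ldots''} and proceeds to assume the theorem as a black box. What you have actually written is a proof of Proposition~\ref{prop_interpolate}, the application of Pila's theorem to the moment conditions~\eqref{f_density}. That is a genuine gap in the sense that no part of your argument --- the Phragm\'en--Lindel\"of-type interpolation along an arithmetic progression, the role of the critical threshold $c+\gamma<1$, the use of the subquadratic bound $\log|h(z)|=\mathcal{O}(|z|^{2-\delta})$ to control the maximum modulus away from the imaginary axis, the Jensen/Carleman-type counting that would exploit the zeros at $m=0,1,2,\dots$ --- is present. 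If you wish to prove Theorem~\ref{thm_Pila} itself, the standard route (following Pila~\cite{Pila}, which refines Carlson's theorem~\cite{Carlson}) is to consider $h(z)/\Gamma(-z)$ or a comparable quotient that cancels the zeros on $\mathbb{N}\cup\{0\}$, bound it via Phragm\'en--Lindel\"of on the half-plane using (i)--(iii) together with Stirling asymptotics for $\Gamma$, and derive a contradiction with $c+\gamma<1$ unless $h\equiv 0$. None of this is sketched.

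For the record, Theorem~\ref{thm_Pila} is not proved in the paper either: the authors cite it from Pila's article and use it directly, so there is no in-paper proof to compare against. As for what you \emph{did} prove: your argument for Proposition~\ref{prop_interpolate} is correct and essentially matches the paper's. The only cosmetic difference is that you absorb the shift by $l$ into the definitions $I_j(z)=\int_0^\infty f_j(t)\,t^{-z-l}\,dt$ and $h(z)=\sum_j \Gamma(z+l+1+\alpha_j)I_j(z)$, so that $h(m)=0$ for all $m\ge 0$ directly, whereas the paper leaves $h$ unshifted and invokes Remark~\ref{rem_Pila} to handle vanishing only from $m=l$ onward; and you conclude via injectivity of the Laplace transform rather than the paper's Fourier-transform argument in Lemma~\ref{lem_stone}, but these are the same argument in different clothes. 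The growth constants you obtain ($\gamma=-1/2$, $c=1$, $c+\gamma=1/2<1$, $\log|h|=\mathcal{O}(|z|^{3/2})$) agree with the paper's.
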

\begin{remark}
\label{rem_Pila}
In Theorem \ref{thm_Pila}, it suffices to require that $h$ vanishes for all positive integers starting from some number $l \in \mathbb{N}$. In the latter case, one can apply Theorem \ref{thm_Pila} to the function $\tilde{h}(z) = z(z-1)\dots(z-(l-1))h(z)$, which satisfies growth conditions (i), (ii), and (iii), instead of the function $h$, to obtain the desired conclusion.
\end{remark}

Next, we will state a couple of lemmas required for the proof of Proposition \ref{prop_interpolate}. The following lemma is a consequence of the upper bound for the Gamma function, see \cite[formula (2.1.19) on page 34]{Paris_Kaminski_book}; see also \cite[page 300]{Olver_book},
\begin{equation}
\label{eq_bound_Gamma_f}
|\Gamma(z)| \le \sqrt{2\pi} |z|^{x-\frac{1}{2}} e^{-\frac{\pi}{2}|y|} e^{\frac{1}{6} |z|^{-1}},
\end{equation}
valid for all $ z = x + iy \in \C$, where $x\ge 0$.

\begin{lemma}\label{lem_H_bounds}
Let $\alpha \in (0,\infty)$. The function 
$$ H(z)=\Gamma(z+1+\alpha),$$
is meromorphic on $\mathbb{C}$, with its only singularities being simple poles at $z=-k-1-\alpha$ for $k=0,1,2,\dots$, and satisfies the bounds
	\begin{itemize}
		\item[(1)]{$|H(\textrm{\em i}y)|\leq C (2|y|)^{\alpha+\frac{1}{2}} e^{-\frac{\pi}{2}|y|}$ for all $y\in \R$ such that $|y|\ge 1+\alpha$,}
		\item[(2)]{$|H(x)| \leq C e^{x \log x}e^{x\log 2+(\alpha+\frac{1}{2})\log(2x)}$ for all $x\ge 1+\alpha$,} 
		\item[(3)]{$|H(z)|\leq C e^{2|z|\log(2|z|)}$ for all $z\in \{x+\textrm{\em i}y\in \C\,:\, x\ge  0, y\in \R\}$ such that $|z|\ge 1+\alpha$}.
	\end{itemize} 
Here $C:=\sqrt{2\pi}e^{\frac{1}{6}}$. 
\end{lemma}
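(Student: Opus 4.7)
The plan is to deduce the three bounds as direct specializations of the classical Gamma function estimate \eqref{eq_bound_Gamma_f}, applied to $w := z + 1 + \alpha$. The meromorphy of $H$ and the location of its poles are immediate from the corresponding statements for $\Gamma$: since $\Gamma$ is meromorphic on $\C$ with simple poles at the nonpositive integers, $H(z) = \Gamma(z+1+\alpha)$ has simple poles precisely when $z + 1 + \alpha = -k$, i.e.\ at $z = -k-1-\alpha$ for $k = 0, 1, 2, \dots$, and is holomorphic elsewhere.

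For the quantitative bounds, I write $z = a + \mathrm{i}b$ and $w = (a+1+\alpha) + \mathrm{i}b$. Since $a \ge 0$ in all three regimes, $\operatorname{Re}(w) = a+1+\alpha > 0$, so \eqref{eq_bound_Gamma_f} applies and yields
\begin{equation*}
|H(z)| \le \sqrt{2\pi}\, |w|^{a+\alpha+1/2}\, e^{-\pi|b|/2}\, e^{|w|^{-1}/6}.
\end{equation*}
In each regime one has $|w| \ge 1+\alpha \ge 1$, hence $e^{|w|^{-1}/6} \le e^{1/6}$, which combines with $\sqrt{2\pi}$ to produce the advertised constant $C = \sqrt{2\pi}\, e^{1/6}$. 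All that remains is to estimate $|w|^{a+\alpha+1/2}$ and the factor $e^{-\pi|b|/2}$ case by case.

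For (1), I set $a = 0$ and $b = y$, so that $|w|^2 = (1+\alpha)^2 + y^2 \le 2y^2$ when $|y| \ge 1+\alpha$, giving $|w|^{\alpha+1/2} \le (2|y|)^{\alpha+1/2}$, while the factor $e^{-\pi|y|/2}$ appears verbatim. For (2), I set $b = 0$ and $a = x \ge 1+\alpha$, so $|w| = x+1+\alpha \le 2x$ and the exponent is $x + \alpha + 1/2$, yielding
\begin{equation*}
|w|^{x+\alpha+1/2} \le (2x)^{x+\alpha+1/2} = e^{x\log x + x\log 2 + (\alpha+1/2)\log(2x)}.
\end{equation*}
For (3), I use $|w| \le |z| + (1+\alpha) \le 2|z|$ and $a + \alpha + 1/2 \le |z| + \alpha + 1/2 \le 2|z|$ (the last step from $|z| \ge 1+\alpha \ge \alpha + 1/2$), obtaining $|w|^{a+\alpha+1/2} \le (2|z|)^{2|z|} = e^{2|z|\log(2|z|)}$, and then discard the benign factor $e^{-\pi|b|/2} \le 1$.

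There is no serious obstacle: the lemma is bookkeeping on top of \eqref{eq_bound_Gamma_f}. The only care required is in verifying the elementary inequalities $|w| \le 2|y|$, $|w| \le 2x$, $|w| \le 2|z|$ in their respective regimes, and in checking that $|z| \ge 1+\alpha$ is enough to dominate the exponent $a+\alpha+1/2$ by $2|z|$ in (3), both of which are straightforward.
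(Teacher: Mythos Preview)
Your proof is correct and follows exactly the approach indicated in the paper, which simply asserts that the lemma is a consequence of the Gamma function bound \eqref{eq_bound_Gamma_f}. The case-by-case bookkeeping you carry out---specializing $w=z+1+\alpha$ and bounding $|w|$ and the exponent in each regime---is precisely what is needed, and your elementary inequalities are all valid.
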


\begin{lemma}
	\label{lem_analytic}
	Let $f \in C^{\infty}((0,\infty))$ and suppose that \eqref{f_bounds} is satisfied for some $c > 0$ and $\delta > 0$. Then the function
$$ F(z) = \int_0^\infty f(t) \, t^{-z} \, dt, \quad z \in \mathbb{C}, $$
is entirely holomorphic. Moreover, there exist constants $c_1, c_2 > 0$ depending on $c$ and $\delta$ such that:
\begin{itemize}
 \item[(i)] $|F(\mathrm{i}y)| \leq c_1$ for all $y \in \mathbb{R}$,
 \item[(ii)] $|F(x)| \leq c_2 \delta^{-x} e^{x \log x} e^{x \log 2 + \frac{1}{2} \log(2x)}$ for all $x \ge 1$,
 \item[(iii)] $|F(z)| \leq c_2 e^{2|z| \log(2|z|)}e^{|z| |\log\delta|}$ for all $z \in \{x + \mathrm{i}y \in \mathbb{C}: x \ge 0, y \in \mathbb{R}\}$ such that $|z| \ge 1$.
\end{itemize}
\end{lemma}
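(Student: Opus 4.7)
The plan is to verify each of the four assertions by direct estimation of the Mellin-type integral defining $F$, using only the pointwise bound \eqref{f_bounds} and Stirling's formula for the Gamma function.

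\emph{Entirety.} For each fixed $t > 0$, the map $z \mapsto f(t)t^{-z} = f(t)e^{-z\log t}$ is holomorphic in $z$. Given any compact set $K \subset \mathbb{C}$, the integrand is dominated uniformly on $K$ by an $L^1((0,\infty))$ function of $t$, since \eqref{f_bounds} implies that near $t = 0$ the super-polynomial decay $e^{-\delta/t}$ crushes the polynomial factor $t^{-\mathrm{Re}(z)}$, and near $t = \infty$ the decay $e^{-\delta t}$ dominates the polynomial growth of $t^{-\mathrm{Re}(z)}$. Differentiation under the integral sign, or alternatively Morera's theorem combined with Fubini, shows that $F$ is entire on $\mathbb{C}$.

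\emph{Bounds (i) and (ii).} For (i), $|t^{-iy}| = 1$ together with \eqref{f_bounds} immediately gives
\[
|F(iy)| \le c\int_0^1 e^{-\delta/t}\,dt + c\int_1^\infty e^{-\delta t}\,dt \le c\Bigl(1 + \frac{e^{-\delta}}{\delta}\Bigr).
\]
For (ii), split $F(x) = \int_0^1 + \int_1^\infty$. The tail is bounded by $ce^{-\delta}/\delta$ since $t^{-x}\le 1$ for $t\ge 1$ and $x \ge 0$. For the singular piece, the substitution $u = \delta/t$ produces a Gamma-type integral: for $x > 1$,
\[
\int_0^1 e^{-\delta/t}t^{-x}\,dt = \delta^{1-x}\int_\delta^\infty e^{-u}u^{x-2}\,du \le \delta^{1-x}\Gamma(x-1).
\]
The Stirling inequality $\Gamma(x-1) \le C(x-1)^{x-3/2}e^{-(x-1)}$ then produces an exponent of the form $x\log x + O(\log x)$, which is comfortably absorbed into the target exponent $x\log x + x\log 2 + \tfrac{1}{2}\log(2x)$; writing $\delta^{1-x} = \delta \cdot \delta^{-x}$ accounts for the $\delta^{-x}$ prefactor in the claim. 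The boundary case $x = 1$ and the narrow range $x \in (1,2)$ are handled by uniform estimation of $\int_\delta^\infty e^{-u} u^{x-2}\,du$, split at $u = 1$, and the resulting bounded terms are absorbed into $c_2$.

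\emph{Bound (iii) and main obstacle.} For (iii), $|t^{-z}| = t^{-\mathrm{Re}(z)}$ implies $|F(z)| \le \int_0^\infty |f(t)|t^{-\mathrm{Re}(z)}\,dt$. When $\mathrm{Re}(z) \ge 1$, apply (ii) at $x = \mathrm{Re}(z) \le |z|$, together with the inequalities $\delta^{-\mathrm{Re}(z)} \le e^{|z|\,|\log \delta|}$ (valid in both regimes $\delta \ge 1$ and $\delta < 1$) and $x\log x + x\log 2 + \tfrac{1}{2}\log(2x) \le 2|z|\log(2|z|)$ for $|z|\ge 1$. When $0\le \mathrm{Re}(z) < 1$, the elementary bound $t^{-\mathrm{Re}(z)} \le t^{-1}$ on $(0,1]$ and $t^{-\mathrm{Re}(z)}\le 1$ on $[1,\infty)$ shows that $|F(z)|$ is uniformly bounded, and (iii) holds trivially since $e^{2|z|\log(2|z|)}\ge 1$ for $|z|\ge 1$. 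The only point requiring genuine care is the Stirling computation in (ii), where the precise exponent $x\log x + x\log 2 + \tfrac{1}{2}\log(2x)$ must be recovered cleanly; this specific form is engineered to match the hypotheses of Pila's theorem (Theorem~\ref{thm_Pila}) when combined with the bounds on $H$ in Lemma~\ref{lem_H_bounds}, and is what will be needed in the proof of Proposition~\ref{prop_interpolate}.
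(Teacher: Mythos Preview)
Your proof is correct and follows essentially the same approach as the paper: split the integral at $t=1$, substitute to reduce to a Gamma-type integral, and invoke Stirling. The paper's execution is slightly cleaner: after substituting $t\mapsto 1/t$ in the $\int_0^1$ piece it observes that both integrals are bounded by $\int_1^\infty e^{-\delta t}t^x\,dt \le \delta^{-x-1}\Gamma(x+1)$, which avoids the pole of $\Gamma(x-1)$ and the separate handling of $x\in[1,2]$, and yields (ii) and (iii) in one stroke rather than via your case split on $\mathrm{Re}(z)$.
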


\begin{proof}
The fact that $F$ is an entire holomorphic function follows from \cite[Theorem 3.3.7]{Lerner}, given that the function $\C\ni z \mapsto t^{-z}$ is holomorphic when $t > 0$, and the bounds \eqref{f_bounds}.

Let us prove the bounds (i)--(iii). To that end, we first note that $t^{-z} = e^{-z \log t}$ for $t > 0$. Bound (i) follows since $f \in L^1((0,\infty))$ by \eqref{f_bounds}. For (ii) and (iii), we write for each $z = x + \mathrm{i}y$, with $x > 0$ and $y \in \mathbb{R}$,	
\begin{multline*}
		|F(z)|\leq  \int_0^1  |f(t)|\,t^{-x}\,dt+ \int_1^{\infty}|f(t)|\,t^{-x}\,dt 
		\leq  c\int_0^1 e^{-\frac{\delta}{t} } \,t^{-x}\,dt+ c\int_1^{\infty}e^{-\delta t}\,t^{-x}\,dt \\
		=  c\int_1^\infty e^{-\delta t} \,t^{x-2}\,dt+ c\int_1^{\infty}e^{-\delta t}\,t^{-x}\,dt \leq 2c  \int_1^\infty e^{-\delta t} \,t^{x}\,dt\\
		=2c\, \delta^{-x-1}  \int_\delta^\infty e^{-t} \,t^{x}\,dt \leq 2c\, \delta^{-x-1}\,\Gamma(x+1). 
	\end{multline*}
Assuming that $x \ge 1$, bound (ii) follows by combining the preceding bound with \eqref{eq_bound_Gamma_f}. Bound (iii) follows similarly.
\end{proof}

\begin{lemma}
	\label{lem_stone}
	Let $f\in C^{\infty}((0,\infty))$ and suppose that \eqref{f_bounds} is satisfied for some $c>0$ and $\delta>0$. Suppose also that 
	\begin{equation}\label{s_f_density}  \int_0^{\infty} f(t)\,t^{k}\,dt=0 \quad \text{for $k=0,1,\ldots$}.\end{equation}
	Then, $f$ vanishes identically.
	\end{lemma}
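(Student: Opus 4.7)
The plan is to apply Pila's theorem (Theorem \ref{thm_Pila} together with Remark \ref{rem_Pila}) to the Mellin-type transform
\[
H(z) = \int_0^\infty f(t)\, t^z \, dt, \quad z \in \C.
\]
The decay bounds \eqref{f_bounds} ensure that this integral converges absolutely for every $z \in \C$: the large-$t$ tail is controlled by an incomplete gamma integral, and the small-$t$ contribution is controlled by $e^{-\delta/t}$ beating any power of $t$. A standard differentiation-under-the-integral argument then shows that $H$ is entire, and hypothesis \eqref{s_f_density} translates precisely to $H(k) = 0$ for all non-negative integers $k$.

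Next, I would establish growth bounds on $H$ in the closed right half-plane $\{\operatorname{Re} z \geq 0\}$ matching the conditions of Pila's theorem. On the imaginary axis, the bound $|H(iy)| \leq \|f\|_{L^1(0,\infty)}$ is uniform in $y$, so condition (i) holds with $\gamma = 0$. On the positive real axis, the estimate
\[
|H(x)| \leq c\int_1^\infty e^{-\delta t}\, t^x\, dt + O(1) \leq c_1\, \delta^{-x-1}\, \Gamma(x+1) + O(1)
\]
together with Stirling's formula yields $\limsup_{x\to\infty} \log|H(x)|/(2x\log x) \leq 1/2$, giving condition (ii) with $c = 1/2$. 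The same estimate extends throughout $\{\operatorname{Re} z \geq 0\}$ and gives $\log|H(z)| = O(|z| \log |z|)$, which is in particular $O(|z|^{2-\delta})$ for any $\delta \in (0,1)$, so condition (iii) also holds.

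Since $c + \gamma = 1/2 < 1$, Theorem \ref{thm_Pila} (via Remark \ref{rem_Pila}) forces $H \equiv 0$ on $\{\operatorname{Re} z \geq 0\}$, and hence $H \equiv 0$ on all of $\C$ by analytic continuation. To recover $f$ from this, I would restrict to the imaginary axis and perform the substitution $t = e^u$: the identity $H(iy) = 0$ for all $y \in \R$ becomes $\hat g(y) = 0$ for all $y \in \R$, where $g(u) := f(e^u)\, e^u$ lies in $L^1(\R) \cap C(\R)$ thanks to \eqref{f_bounds}. Fourier inversion then forces $g \equiv 0$, and hence $f \equiv 0$ on $(0,\infty)$.

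The main obstacle I anticipate is the careful calibration of the growth bounds on $H$ so that the pair $(c,\gamma)$ lands in the admissible region $c+\gamma<1$ of Pila's theorem; this is where the precise super-exponential decay of $f$ near $0$ and the exponential decay at $\infty$ interact with the Stirling estimate for $\Gamma(x+1)$. The remaining steps---the entireness of $H$, the translation of the vanishing hypothesis, and the Fourier inversion at the end---are essentially routine.
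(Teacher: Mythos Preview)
Your argument is correct, but it takes a noticeably different route from the paper's. The paper's proof is considerably more elementary: it considers the Fourier transform $\mathcal F(1_{[0,\infty)}f)(\xi)=\int_0^\infty f(s)e^{-i\xi s}\,ds$, observes (using the exponential decay of $f$ at infinity) that this function is holomorphic in the half-plane $\{\textrm{Im}\,\xi<\delta\}$, and then uses the moment conditions \eqref{s_f_density} to conclude that all derivatives of this holomorphic function vanish at $\xi=0$. Analytic continuation then forces the transform to vanish identically, hence $f\equiv 0$. No appeal to Pila's theorem is made.

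Your approach via the Mellin transform and Theorem~\ref{thm_Pila} works, and it has the appeal of reusing the same interpolation machinery that the paper employs for Proposition~\ref{prop_interpolate}; indeed, the growth bounds you need on $H$ are essentially those already proved in Lemma~\ref{lem_analytic} for $F(z)=\int_0^\infty f(t)t^{-z}\,dt=H(-z)$. The cost is that you invoke a nontrivial Carlson-type theorem for a step that can be handled by the identity theorem for holomorphic functions. The paper's argument is shorter and keeps the heavier tool in reserve for where it is genuinely needed.
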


\begin{proof}
	Consider the the Fourier transform of $1_{[0,\infty)}f$,
	$$\mathcal F(1_{[0,\infty)}f)(\xi) = \int_0^{\infty}f(s)e^{-\textrm{i}\,\xi s}\,ds.$$
	Applying of \cite[Theorem 3.3.7]{Lerner}, it follows that $\mathcal F(1_{[0,\infty)}f)(\xi)$ is holomorphic for all $\xi \in \C$ that satisfy $\textrm{Im}\, \xi < \delta$. In view of \eqref{s_f_density}, we deduce that $\mathcal F(1_{[0,\infty)}f)(\xi)$ and all its derivatives vanish at the point $\xi=0$, thus yielding that $f$ must vanish identically. 
\end{proof}

\begin{proof}[Proof of Proposition~\ref{prop_interpolate}]
	Let us define the countable set $\mathbb A $ as follows
	$$ \mathbb A =\{z\in \C\,:\, -z-1-\alpha_j \in \{0\}\cup \N \quad \text{for some $j=1,\ldots,N$}\}\subset (-\infty,-1).$$
	We define the function 
	$$ h(z):=\sum_{j=1}^N H_j(z)\, F_j(z),   \quad  z\in \C,$$
where 
\[
H_j(z):=\Gamma(z+1+\alpha_j), \quad F_j(z):=\int_0^{\infty} f_j(t)\, t^{-z}\,dt, \quad j=1,\ldots,N.
\]
Considering Lemma~\ref{lem_analytic} and the fact that $H_j$ are holomorphic on $\mathbb{C} \setminus \mathbb{A}$ with simple poles at each $z=-k-1-\alpha_j$, $k=0,1,2,\dots$, we observe that the function $h$ is holomorphic on $\mathbb{C} \setminus \mathbb{A}$ and also has (at most) simple poles at each $z \in \mathbb{A}$.	
	
Applying the bounds (1)--(3) in Lemma~\ref{lem_H_bounds} for the functions $H_j(z)$ and the bounds (i)-(iii) in Lemma~\ref{lem_analytic} for $F_j(z)$ for $\textrm{Re}\, z\geq 0$ and with $j=1,\ldots,N$, we deduce that the function $h(z)$ satisfies the growth rates in the right half plane that is stated in Theorem \ref{thm_Pila} (with $c=1$ and $\gamma=-\frac{1}{2}$ in the statement of the theorem) and consequently, as $h(z)$ vanishes on positive integers greater than or equal to $l$ (see \eqref{f_density}), we must have $h(z)=0$ for all $z$ in the right half plane. 

Noting that $\mathbb C\setminus \mathbb A$ is connected, it follows from analytic continuation that
	\begin{equation}\label{h_zero}
	h(z) =0 \qquad \forall\, z\in \mathbb C \setminus \mathbb A.
	\end{equation}
	Let us now fix $j\in \{1,\ldots N\}$ and consider for each $k=0,1,2,\ldots$ the following limit,
	$$ \lim_{z\to -\alpha_j-1-k} \left(z+1+\alpha_j+k\right)\, h(z).$$
	On the one hand, the limit above is zero, owing to \eqref{h_zero}. On the other hand, using property \eqref{diff_int}, we note that
	$$  \lim_{z\to -\alpha_j-1-k} \left(z+1+\alpha_j+k\right)\, \Gamma(z+1+\alpha_l)\,\int_0^{\infty} f_l(t) \,t^{-z}\,dt=0 \quad \text{for all $l\neq j$.}
	$$
	Hence, 
	$$
	 \lim_{z\to -\alpha_j-1-k} \left(z+1+\alpha_j+k\right)\, \Gamma(z+1+\alpha_j)\,\int_0^{\infty} f_j(t) \,t^{-z}\,dt=0.
	 $$
	 Noting that Gamma functions have simple poles at nonpositive integers $-k$, we conclude via residue calculus that 
	 $$
	 \int_0^\infty f_j(t)\,t^{1+\alpha_j}\,t^{k}\,dt=0 \quad \text{for all $k=0,1,\ldots$.}
	 $$ 
	 The claim now follows from Lemma~\ref{lem_stone}.
	\end{proof}

Let us now turn to the proof of the entanglement principle, Theorem~\ref{thm_ent}. We first observe that it suffices to establish Theorem~\ref{thm_ent} in the case when all $\alpha_j \in (0,1)$ for $j = 1, \dots, N$. Specifically, Theorem~\ref{thm_ent} follows from the following result and the unique continuation principle for the Laplacian.  

\begin{lemma}
	\label{lem_ent_new}
	 Let $\{v_j\}_{j=1}^N\subset  C^{\infty}(M)$, where $N\in \N$,  satisfy \eqref{v_12_O} for some $b_1,\ldots,b_N \in \C\setminus\{0\}$, and some $\{\alpha_j\}_{j=1}^N \subset (0,1)$ such that  $\alpha_j \ne \alpha_k$  for all $j,k=1,\ldots,N$ with $j\neq k$. Then $v_j=0$ on $M$ for all $j=1,\ldots,N$.
\end{lemma}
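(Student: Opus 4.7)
The plan is to combine the pointwise heat--semigroup formula \eqref{frac_laplace_heat_pointwise} for $(-\Delta_g)^{\alpha_j}$ with the interpolation result Proposition~\ref{prop_interpolate} and a weak unique continuation principle for the heat equation. Since $(-\Delta_g)^{\alpha_j}$ commutes with every $\Delta_g^m$ by the functional calculus, and $\Delta_g$ is a local operator, the hypothesis \eqref{v_12_O} propagates to the family
\[
(\Delta_g^m v_j)|_O=0,\qquad \sum_{j=1}^N b_j\,\bigl((-\Delta_g)^{\alpha_j}\Delta_g^m v_j\bigr)\big|_O=0,\qquad m=0,1,2,\ldots.
\]
Fix an arbitrary $\phi\in C_0^\infty(O)$ with $(\phi,1)_{L^2(M)}=0$, and set $F_j(t):=(e^{t\Delta_g}v_j,\phi)_{L^2(M)}$. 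Applying \eqref{frac_laplace_heat_pointwise} to $\Delta_g^m v_j$ on $O$, pairing against $\phi$, and using the identity $\Delta_g^m e^{t\Delta_g}=\partial_t^m e^{t\Delta_g}$ yield, for every $m\geq 0$,
\[
\sum_{j=1}^N\frac{b_j}{\Gamma(-\alpha_j)}\int_0^\infty \partial_t^m F_j(t)\,\frac{dt}{t^{1+\alpha_j}}=0.
\]

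Next, I would integrate by parts $m$ times in $t$, using $\partial_t^m(t^{-(1+\alpha_j)})=(-1)^m\,\Gamma(m+1+\alpha_j)/\Gamma(1+\alpha_j)\cdot t^{-(m+1+\alpha_j)}$ and noting that the boundary contributions vanish thanks to the decay estimates stated below. Setting
\[
f_j(t):=\frac{b_j}{\Gamma(-\alpha_j)\,\Gamma(1+\alpha_j)}\,F_j(t)\,t^{-(1+\alpha_j)},
\]
the identities take exactly the form required by Proposition~\ref{prop_interpolate}:
\[
\sum_{j=1}^N \Gamma(m+1+\alpha_j)\int_0^\infty f_j(t)\,t^{-m}\,dt=0,\qquad m=0,1,2,\ldots.
\]
To verify the bounds \eqref{f_bounds} on $f_j$, I would use two estimates on $F_j$: on $[1,\infty)$ the spectral expansion $F_j(t)=\sum_{k\geq 1}e^{-t\lambda_k}(\pi_k v_j,\phi)_{L^2(M)}$ (with the $k=0$ mode eliminated by $(\phi,1)=0$) gives $|F_j(t)|\leq Ce^{-\lambda_1 t}$, while on $(0,1]$ the off-diagonal Gaussian bound \eqref{Gauss_heat}, together with the positive separation $d_0:=\mathrm{dist}_g(\supp(\phi),\supp(v_j))>0$, gives $|F_j(t)|\leq Ce^{-c d_0^2/t}$; in both regimes the polynomial weight $t^{-(1+\alpha_j)}$ is easily absorbed.

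Proposition~\ref{prop_interpolate} then forces $f_j\equiv 0$, hence $F_j(t)=0$ for all $t>0$ and all $j$. As $\phi$ ranges over all mean-zero functions in $C_0^\infty(O)$, we deduce that $(e^{t\Delta_g}v_j)|_O$ is orthogonal in $L^2(O)$ to every mean-zero test function, and is therefore constant on each connected component of $O$; denote this value by $c_j(t)$. The heat equation applied to $u=e^{t\Delta_g}v_j$ gives $c_j'(t)=(\partial_t u)|_O=(\Delta_g u)|_O=0$, while continuity of $e^{t\Delta_g}v_j$ at $t=0$ combined with $v_j|_O=0$ gives $c_j(0^+)=0$. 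Thus $(e^{t\Delta_g}v_j)(x)=0$ for all $x\in O$ and $t>0$. The weak unique continuation principle for the parabolic operator $\partial_t-\Delta_g$ on the connected manifold $M$ (see, e.g., Lin~\cite{Lin}) then forces $e^{t\Delta_g}v_j\equiv 0$ on $M$ for every $t>0$, and passing to the limit $t\to 0^+$ yields $v_j\equiv 0$ on $M$.

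The main obstacle lies in producing the two distinct decay regimes of $F_j$ needed by Proposition~\ref{prop_interpolate}: exponential decay at infinity, obtained via the spectral gap $\lambda_1>0$ together with the mean-zero constraint on $\phi$ that eliminates the troublesome constant mode of $v_j$; and $e^{-c/t}$ decay near zero, obtained from the off-diagonal Gaussian heat kernel estimate combined with the positive spatial separation of $\supp(\phi)\subset O$ from $\supp(v_j)\subset M\setminus\overline O$. Both estimates must persist under the time derivatives $\partial_t^p$ so that the boundary contributions vanish in the $m$-fold integration by parts; this is achieved by applying the same arguments to $\partial_t^p F_j(t)=(e^{t\Delta_g}\Delta_g^p v_j,\phi)_{L^2(M)}$, for which $\Delta_g^p v_j$ still vanishes on $O$ and still has support separated from $\supp(\phi)$.
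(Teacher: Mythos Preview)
Your proof is correct and follows essentially the same architecture as the paper's: propagate the hypothesis to $\Delta_g^m v_j$ via locality and functional calculus, invoke the heat--semigroup formula \eqref{frac_laplace_heat_pointwise}, integrate by parts $m$ times, feed the result into Proposition~\ref{prop_interpolate}, and conclude by parabolic unique continuation. The two places where you diverge from the paper are purely technical. First, the paper works pointwise at $x\in\omega\subset\subset O$, while you pair against a test function $\phi\in C_0^\infty(O)$; both produce functions of $t$ alone with the required decay. Second, to secure exponential decay of $F_j$ (or its analogue) as $t\to\infty$ one must kill the zero eigenmode: the paper does this by applying one extra Laplacian and working with $e^{t\Delta_g}\Delta_g v_j$ (so that the argument has mean zero), whereas you impose $(\phi,1)_{L^2(M)}=0$ on the test function, which accomplishes the same thing via self-adjointness of the semigroup. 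Your final step---showing $(e^{t\Delta_g}v_j)|_O$ is locally constant, then constant in $t$ by the heat equation, then zero by continuity at $t=0$---is a clean alternative to the paper's route through $\Delta_g v_j=0$ followed by elliptic unique continuation. None of these variations changes the substance of the argument.
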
 

\begin{proof}[Proof of Theorem~\ref{thm_ent}]
To see that Theorem~\ref{thm_ent} indeed follows from Lemma \ref{lem_ent_new}, we proceed as follows. If some $\alpha_j > 1$ in Theorem~\ref{thm_ent}, we express it as $\alpha_j = m_j + \tilde{\alpha}_j$, where $m_j \in \mathbb{N}$ is the integer part of $\alpha_j$ and $\tilde{\alpha}_j \in (0,1)$. Then, by the functional calculus, see \cite[Theorem 4.15]{Dimassi_Sjostrand_book},
 we have
\[
(-\Delta_g)^{\alpha_j}v_j = (-\Delta_g)^{\tilde \alpha_j}(-\Delta_g)^{m_j}v_j.
\]
Let $\tilde{v}_j := (-\Delta_g)^{m_j} v_j$ for this $j$. We set $\tilde{\alpha}_j := \alpha_j$ and $\tilde{v}_j := v_j$ if $\alpha_j \in (0,1)$ in Theorem~\ref{thm_ent}. Note that the condition \eqref{diff_int} implies that $\tilde \alpha_j \neq \tilde \alpha_k$ for all $j,k=1,\ldots,N$ with $j \neq k$. Applying Lemma \ref{lem_ent_new} with $\tilde{\alpha}_j$ and $\tilde{v}_j$ allows us to conclude that $\tilde{v}_j = 0$ on $M$. If $\tilde{v}_j = (-\Delta_g)^{m_j} v_j$, using the fact that $v_j|_{\mathcal{O}} = 0$ and the unique continuation principle for the Laplacian, we obtain $v_j = 0$ on $M$, thereby establishing Theorem~\ref{thm_ent}.
\end{proof}

%Finally, note in view of the spectral resolution of $-\Delta_{g}$ on $(M,g)$ that there holds
%\begin{equation}
%	\label{heat_kernel_bound_2}
%	\|e^{t\Delta_{g}}\|_{L^2(M)\to L^{2}(M)} \leq e^{-\beta t} \quad t>0,
%\end{equation}
%for some $\beta>0$ depending only on $(M,g)$.

\begin{proof}[Proof of Lemma \ref{lem_ent_new}]
First, let $m=1,2,\dots$, and observe that the equality \eqref{v_12_O} implies that
\begin{equation}
\label{eq_100_1}
(\Delta_g^{m+1}v_1)|_{\mathcal{O}}=\dots=(\Delta_g^{m+1}v_N)|_{\mathcal{O}}=0,
\end{equation}
and 
\begin{equation}
\label{eq_100_2}
\sum_{j=1}^N b_j((-\Delta_g)^{\alpha_j}\Delta_g^{m+1} v_j)|_{\mathcal{O}}=0.
\end{equation}
Note that the functional calculus of self-adjoint operators was employed to get~\eqref{eq_100_2}. 

Let $\omega \subset\subset \mathcal{O}$ be a nonempty open set. Due to the fact that $\alpha_j \in (0,1)$, by utilizing \eqref{frac_laplace_heat_pointwise} along with \eqref{eq_100_1}, we derive from \eqref{eq_100_2} that
\begin{equation}
\label{eq_100_3}
\sum_{j=1}^N b_j \frac{1}{\Gamma(-\alpha_j)}\int_0^\infty  (e^{t\Delta_g} \Delta_g^{m+1} v_j)(x) \frac{dt}{t^{1+\alpha_j}}=0,
\end{equation}
for $x\in \omega$ and $m=1,2,\dots$.  

Next, we will argue as in \cite[proof of Proposition 3.1]{Ghosh_Uhlmann_2021}, see also \cite[proof of Theorem 1.1]{FGKU}. Using the fact that the function $t\mapsto e^{t\Delta_g} (\Delta_g v_j)\in C^\infty([0,\infty); C^\infty(M))$, and  that $e^{t\Delta_g}\Delta_{g}^{m}= \Delta_{g}^{m} e^{t\Delta_g}$ for all $t\ge 0$ on $\mathcal{D}(\Delta_{g}^{m})$, we obtain for any $m=1,2,\dots$,
\begin{equation}
\label{eq_100_4}
\big(e^{t\Delta_g}\Delta_g^{m} (\Delta_g v_j)\big)(x)=\p_t^{m} \big(e^{t\Delta_g} (\Delta_g v_j)\big)(x), 
\end{equation}
for  $x\in \omega$ and $j=1,\dots, N$. Combining \eqref{eq_100_3} and  \eqref{eq_100_4}, we get 
\begin{equation}
\label{eq_100_5}
\sum_{j=1}^N b_j \frac{1}{\Gamma(-\alpha_j)}\int_0^\infty  \p_t^m(e^{t\Delta_g} \Delta_g v_j)(x) \frac{dt}{t^{1+\alpha_j}}=0,
\end{equation}
for $x\in \omega$ and $m=1,2,\dots$.  

We shall repeatedly integrate by parts in \eqref{eq_100_5} $m$ times. We claim that there will be no contributions from the endpoints. Indeed, fixing $s > n/2$ such that $s/2 \in \mathbb{N}$, for $t > 0$ and $x \in \omega$, using \eqref{eq_100_4} and \eqref{eq_100_-1}, we obtain 
\begin{equation}
\label{eq_100_5_1}
|\p_t^l (e^{t\Delta_g}\Delta_g v_j)(x)|=|e^{t\Delta_g}\Delta_g^{l+1}v_j(x)|\le Ce^{-\beta t}\|(1-\Delta_{g})^{s/2}\Delta_g^{l+1} v_j\|_{L^2(M)},
\end{equation}
where $\beta > 0$ and $l = 0, 1, \dots, m-1$. The bound \eqref{eq_100_5_1} shows that no contribution from $t = \infty$ arises when integrating by parts in \eqref{eq_100_5}.

To demonstrate that no contribution arises at $t=0$ when integrating by parts in \eqref{eq_100_5}, using \eqref{eq_100_4}, we first obtain, for $t>0$ and $x \in \omega$,
\begin{equation}
\label{eq_100_6}
\p_t^l (e^{t\Delta_g}\Delta_g v_j)(x)=\int_{M\setminus \mathcal{O}} e^{t\Delta_g}(x,y) (\Delta_g^{l+1} v_j)(y)dV_g(y),
\end{equation}
for $l=0, 1, \ldots, m-1$. Here $dV_g$ is the Riemannian volume element. Using \eqref{Gauss_heat}, we derive from \eqref{eq_100_6} that for $0 < t < 1$ and $x \in \omega$,
\begin{equation}
\label{eq_100_7}
|\p_t^l (e^{t\Delta_g}\Delta_g v_j)(x)|\le \|e^{t\Delta_g}(\cdot,\cdot)\|_{L^\infty(\omega\times (M\setminus\mathcal{O}))}\|\Delta_g^{l+1} v_j\|_{L^1(M)}\le Ce^{-\frac{\tilde c}{t}}\|\Delta_g^{l+1} v_j\|_{L^1(M)},
\end{equation}
$l=0, 1, \ldots, m-1$, and $j=1, \ldots, N$. Here $\tilde{c} > 0$ depends on $\mathrm{dist}_{g}(\overline{\omega}, M \setminus \mathcal{O}) > 0$. The bound \eqref{eq_100_7} shows that no contribution arises at $t=0$ when integrating by parts in \eqref{eq_100_5}.

Integrating by parts $m$ times in \eqref{eq_100_5}, we obtain that 
\begin{equation}
\label{eq_100_8}
\sum_{j=1}^N b_j \frac{1}{\Gamma(-\alpha_j)}\kappa_j\int_0^\infty  (e^{t\Delta_g} \Delta_g v_j)(x) \frac{dt}{t^{m+1+\alpha_j}}=0,
\end{equation}
for $x\in \omega$ and $m=1,2,\dots$.  Here 
\begin{equation}
\label{eq_100_9}
\kappa_j=(-1)^m(1+\alpha_j)(2+\alpha_j)\dots (m+\alpha_j)=(-1)^m\frac{\Gamma(m+1+\alpha_j)}{\Gamma(1+\alpha_j)}.
\end{equation}
Fixing an arbitrary point $x\in \omega$ and letting 
\begin{equation}
\label{eq_100_10}
f_j(t)=\frac{b_j}{\Gamma(-\alpha_j)\Gamma(1+\alpha_j)}(e^{t\Delta_g} \Delta_g v_j)(x) t^{-(1+\alpha_j)}= 
\frac{-b_j \sin(\pi \alpha_j)}{\pi}(e^{t\Delta_g} \Delta_g v_j)(x) t^{-(1+\alpha_j)},
\end{equation}
for $t>0$, $j=1,\dots, N$, 
we get from \eqref{eq_100_8}, \eqref{eq_100_9}, and \eqref{eq_100_10},  that 
\begin{equation}
\label{eq_100_11}
\sum_{j=1}^N \Gamma(m+1+\alpha_j)\int_0^\infty f_j(t) t^{-m}dt=0, \quad m=1,2,\dots.
\end{equation}
Note that in the second equality in \eqref{eq_100_10}, we used the reflection formula for the Gamma function
\[
\Gamma(-\alpha_j)\Gamma(1+\alpha_j)=-\frac{\pi}{\sin(\pi \alpha_j)},
\]
see \cite[formula (2.1.20) on page 35]{Paris_Kaminski_book}. We have that $f_j\in C^\infty((0,\infty))$, $j=1,\dots, N$. Using the bounds \eqref{eq_100_5_1}  and  \eqref{eq_100_7} with $l=0$, we get for $t\ge 1$,  	
\begin{equation}
\label{eq_100_12}
|f_j(t)|\le C|b_j| e^{-\beta t}\|(1-\Delta_{g})^{s/2}\Delta_g  v_j\|_{L^2(M)}\le Ce^{-\beta t},
\end{equation}	
and for $t\in (0,1]$,
\begin{equation}
\label{eq_100_13}
|f_j(t)|\le C|b_j| e^{-\frac{\tilde c}{t}}\|\Delta_g v_j\|_{L^1(M)}t^{-2}\le Ce^{-\frac{\tilde c}{2t}},
\end{equation}	
for all $j=1,\dots, N$. The bounds \eqref{eq_100_12} and \eqref{eq_100_13} show that the functions $f_j$, $j=1,\ldots,N$ , satisfy the bounds \eqref{f_bounds}. In view of \eqref{eq_100_11}, we conclude, by applying Proposition~\ref{prop_interpolate}, that $f_j$ must vanish identically for  $j=1,\ldots,N$. Hence, 
\[
 (e^{t\Delta_g}\Delta_g v_j)(x)=0, \quad t>0,\quad  x\in \omega, \quad j=1,\ldots,N.
 \]
Furthermore, the function $e^{t\Delta_g}\Delta_g v_j \in C^\infty((0,\infty)\times M)$ satisfies the heat equation $(\partial_t - \Delta_g)(e^{t\Delta_g}\Delta_g v_j) = 0$ in $M$, $j = 1, \dots, N$. It follows from the unique continuation principle for heat equations (see, e.g., \cite[Sections 1 and 4]{Lin}) together with the fact that $M$ is connected that $(e^{t\Delta_g}\Delta_g v_j)(x) = 0$ for all $t > 0$ and $x \in M$, $j = 1, \ldots, N$. Using that $\lim_{t\to 0}(e^{t\Delta_g}\Delta_g v_j)(x)=(\Delta_g v_j)(x)$ for any $x\in M$, see \cite[Theorem 4.1.4, page 105]{Hsu_book}, we get
$$\Delta_g v_1 = \ldots = \Delta_g v_N = 0 \quad \text{on } M.$$
Finally, in view of \eqref{v_12_O} and the unique continuation principle for elliptic equations, we deduce that $v_1 = \ldots = v_N = 0$ on $M$.
 \end{proof}

\begin{remark}
\label{rem_UCP_N_1}
The case $N=1$ in Theorem~\ref{thm_ent} and Lemma~\ref{lem_ent_new} corresponds to the unique continuation principle for the fractional Laplacian $(- \Delta_g)^\alpha$, with $\alpha \in (0,\infty) \setminus \N$, on a closed connected manifold $(M,g)$. In this case, the proof does not require the use of Theorem~\ref{thm_Pila} by Pila and simply follows from Lemma~\ref{lem_stone} in view of \eqref{eq_100_11} with $N=1$. We also refer the reader to \cite{Ghosh_Uhlmann_2021} for the proof of unique continuation principle for fractional Laplacian on $\R^n$ using heat semigroups. 
\end{remark}

Next, for future reference, we shall state some consequences of Theorem~\ref{thm_ent}. We refer to Proposition~\ref{prop_direct} and Remark~\ref{rem_direct} for the notation used in the results stated below.
\begin{lemma}
\label{lem_analog_5_2}
Let $\alpha \in (0,1)$. Let $(M, g)$ be a smooth closed and connected Riemannian manifold, and let $V \in C^{\infty}(M)$.  Assume that $\dim(\mathcal{K}_{M,g,V}) = N$, where $1 \le N < \infty$. Let $\zeta_1, \dots, \zeta_N \in \mathcal{K}_{M,g,\overline{V}}$ be linearly independent functions. Then, for any $c = (c_1, \dots, c_N) \in \mathbb{C}^N$, there exists a function $h \in C^\infty_0(O)$ such that $(h, \zeta_l)_{L^2(O)} = c_l$ for $l = 1, \dots, N$.
\end{lemma}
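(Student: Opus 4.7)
The plan is to prove the dual statement: the $\mathbb{C}$-linear map $T \colon C^\infty_0(O) \to \mathbb{C}^N$ defined by
\[
T(h) = \bigl( (h,\zeta_1)_{L^2(O)}, \ldots, (h,\zeta_N)_{L^2(O)} \bigr)
\]
is surjective. Since the codomain is finite-dimensional, it suffices to show that the only $b = (b_1,\ldots,b_N) \in \mathbb{C}^N$ annihilating the range of $T$ is $b = 0$. Thus, I would argue by contradiction: assume $\sum_{j=1}^N b_j (h,\zeta_j)_{L^2(O)} = 0$ for all $h \in C^\infty_0(O)$ with some nonzero $b$, and derive a contradiction via the unique continuation principle for the fractional Schr\"odinger operator.

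First I would use the fundamental lemma of the calculus of variations. Writing $(h,\zeta_j)_{L^2(O)} = \int_O h\,\overline{\zeta_j}\,dV_g$, the annihilation condition reads $\int_O h \bigl( \sum_{j=1}^N b_j \overline{\zeta_j} \bigr)\,dV_g = 0$ for every $h \in C^\infty_0(O)$, so $\sum_{j=1}^N b_j \overline{\zeta_j} = 0$ on $O$. Taking complex conjugates and setting
\[
w := \sum_{j=1}^N \overline{b_j}\,\zeta_j \in C^\infty(M),
\]
we obtain $w|_O = 0$ and, since $\mathcal{K}_{M,g,\overline{V}}$ is a $\mathbb{C}$-linear subspace, $w \in \mathcal{K}_{M,g,\overline{V}}$, i.e.\ $(-\Delta_g)^\alpha w = -\overline{V} w$ on $M$. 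Restricting this identity to $O$ and using $w|_O = 0$ yields $((-\Delta_g)^\alpha w)|_O = 0$ as well.

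Now I would invoke Theorem~\ref{thm_ent} in the case $N=1$, with $v_1 = w$, $b_1 = 1$, and $\alpha_1 = \alpha \in (0,1) \subset (0,\infty)\setminus \mathbb{N}$; condition \eqref{diff_int} is vacuous, and the hypothesis \eqref{v_12_O} is precisely what we just verified. The theorem produces $w \equiv 0$ on $M$. By the linear independence of $\zeta_1,\ldots,\zeta_N$, this forces $\overline{b_j} = 0$ for all $j$, contradicting the nontriviality of $b$. Hence $T$ is surjective, which is the assertion of the lemma.

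There is no serious obstacle beyond recognizing that the desired statement is a purely linear-algebraic surjectivity of a map into $\mathbb{C}^N$, whose annihilator can be analyzed pointwise and then ruled out by the unique continuation principle for $(-\Delta_g)^\alpha + \overline{V}$, already packaged in the $N=1$ version of Theorem~\ref{thm_ent}. The only subtle point is to keep track of complex conjugates so that the conclusion $w|_O = 0$ is actually a statement about an element of $\mathcal{K}_{M,g,\overline{V}}$ to which the unique continuation principle applies.
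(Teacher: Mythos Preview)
Your proof is correct and follows essentially the same approach as the paper's: both argue surjectivity of the linear map into $\mathbb{C}^N$ by contradiction, show that a nonzero annihilator $b$ forces a nontrivial linear combination $\sum \overline{b_j}\,\zeta_j$ (the paper writes it as $\sum b_l \zeta_l$) to vanish on $O$, and then apply the $N=1$ case of Theorem~\ref{thm_ent} to conclude it vanishes globally, contradicting linear independence. The only cosmetic difference is your explicit bookkeeping of complex conjugates.
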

\begin{proof}
We shall show that the following linear map
\[
L: C_0^\infty(O) \ni h \mapsto \big((h, \zeta_1)_{L^2(O)}, \dots, (h, \zeta_N)_{L^2(O)}\big) \in \mathbb{C}^N
\]
is surjective. Suppose, for the sake of contradiction, that $L$ is not surjective. Then there exists a nonzero vector $b = (b_1, \dots, b_N) \in \mathbb{C}^N$ such that 
\begin{equation}
\label{eq_lemma_anal_1}
0 = L(h) \cdot b = (h, \zeta)_{L^2(O)},
\end{equation}
for all $h \in C^\infty_0(O)$. Here, $\zeta := \sum_{l=1}^N b_l \zeta_l \in C^\infty(M)$, and $v \cdot w = \sum_{l=1}^N v_l \overline{w}_l$ denotes the inner product of vectors $v, w \in \mathbb{C}^N$. It follows from \eqref{eq_lemma_anal_1} that $\zeta|_{O} = 0$. Since $\zeta \in \mathcal{K}_{M,g,\overline{V}}$, we have $((-\Delta_g)^{\alpha} \zeta)|_{O} = 0$. By the unique continuation principle for $(-\Delta_g)^{\alpha}$ of Theorem \ref{thm_ent} with $N = 1$, we conclude that $\zeta = 0$ on $M$, implying that $b = 0$, which is a contradiction.
\end{proof}

We refer to Definition \ref{def_S} for the notation used in the statement of the following lemma. This lemma will be used in the next section and is a key ingredient in the reduction step from the Cauchy data set $\mathcal C^O_{M,g,V}$ to the variant of Gel'fand inverse spectral problem that was discussed in the introduction.
\begin{lemma}
	\label{density_lemma}
	Let $\alpha \in (0,1)$. Let $(M, g)$ be a smooth closed and connected Riemannian manifold, and let $V \in C^{\infty}(M)$ satisfy $V|_{O} = 0$. Let $\phi \in C^{\infty}(M)$ be an eigenfunction of $-\Delta_g$ on $(M, g)$ corresponding to some eigenvalue $\lambda > 0$. Then, there holds,
	\begin{equation}
	\label{S_ortho_100}
	(S_{M,g,V}(f),\phi)_{L^2(M)} \neq 0 \quad \text{for some $f\in \mathcal H_{M,g,V}^O$}.
	\end{equation}
\end{lemma}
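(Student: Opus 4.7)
The plan is to argue by contradiction. Assume $(S_{M,g,V}(f),\phi)_{L^2(M)}=0$ for every $f\in\mathcal H_{M,g,V}^O$; the goal is to combine a standard adjoint/duality reduction with the eigenfunction identity $(-\Delta_g-\lambda)\phi=0$ and the entanglement principle (Theorem \ref{thm_ent}) to eventually force $\phi\in\mathcal K_{M,g,V}$, which is incompatible with $V|_O=0$ and $\lambda>0$.

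I would first verify that the $L^2(M)$-projection $\phi_2\in C^\infty(M)$ of $\phi$ onto $\mathcal K_{M,g,V}^\perp$ is nonzero: otherwise $\phi\in\mathcal K_{M,g,V}$ would give $(\lambda^\alpha+V)\phi\equiv 0$ on $M$, and since a nonzero Laplace eigenfunction cannot vanish on an open set, this forces $V\equiv-\lambda^\alpha$, contradicting $V|_O=0$ and $\lambda>0$. By the Fredholm alternative (Proposition \ref{prop_direct}) there is $w\in C^\infty(M)$ with $P_{M,g,\overline V}w=\phi_2$. Since $S_{M,g,V}(f)\perp\mathcal K_{M,g,V}$ and $\phi-\phi_2\in\mathcal K_{M,g,V}$, the contradiction hypothesis reads $(S_{M,g,V}(f),\phi_2)=0$; the adjoint duality $(f,w)=(P_{M,g,V}S_{M,g,V}(f),w)=(S_{M,g,V}(f),P_{M,g,\overline V}w)$ then gives $(f,w)_{L^2(M)}=0$ for every $f\in\mathcal H_{M,g,V}^O$. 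Using the characterization of $\mathcal H_{M,g,V}^O$ as the annihilator in $C_0^\infty(O)$ of a basis $\{\zeta_l\}$ of $\mathcal K_{M,g,\overline V}$ together with the $L^2(O)$-linear independence of $\{\zeta_l|_O\}$ (a consequence of Theorem \ref{thm_ent} with $N=1$, cf.\ Lemma \ref{lem_analog_5_2}), one concludes $w|_O\in\linspan\{\zeta_l|_O\}$; subtracting the corresponding combination of the $\zeta_l$ from $w$ produces $\tilde w\in C^\infty(M)$ with $\tilde w|_O=0$ and still $P_{M,g,\overline V}\tilde w=\phi_2$.

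The heart of the argument is to extract the desired contradiction from $\tilde w|_O=0$, $P_{M,g,\overline V}\tilde w=\phi_2$, and the eigenfunction equation for $\phi$ by invoking Theorem \ref{thm_ent}. Restricting $P_{M,g,\overline V}\tilde w=\phi_2$ to $O$ and using $V|_O=0$ with $\tilde w|_O=0$ gives $((-\Delta_g)^\alpha\tilde w)|_O=\phi_2|_O$. Applying the local operator $-\Delta_g-\lambda$ to the global equation $(-\Delta_g)^\alpha\tilde w=\phi_2-\overline V\tilde w$, using the functional-calculus commutativity $[(-\Delta_g)^\alpha,-\Delta_g]=0$, the vanishing of $(-\Delta_g-\lambda)(\overline V\tilde w)$ on $O$ (by Leibniz and the flatness of $\tilde w$ on $O$), and $(-\Delta_g-\lambda)\phi=0$, one obtains $((-\Delta_g)^\alpha h)|_O=-((-\Delta_g-\lambda)\phi_1)|_O$, where $h:=(-\Delta_g-\lambda)\tilde w$ and $\phi_1:=\phi-\phi_2\in\mathcal K_{M,g,V}$; moreover $h|_O=0$ by locality. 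When $\mathcal K_{M,g,V}=\{0\}$, the right-hand side vanishes, Theorem \ref{thm_ent} with $N=1$ forces $h\equiv 0$ on $M$, and thus $\tilde w$ is a $\lambda$-eigenfunction of $-\Delta_g$ vanishing on $O$; elliptic unique continuation then yields $\tilde w\equiv 0$, whence $\phi_2=P_{M,g,\overline V}\tilde w=0$, contradicting the nonvanishing of $\phi_2$ established above.

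The main obstacle is the general case where $\mathcal K_{M,g,V}\neq\{0\}$ and $((-\Delta_g-\lambda)\phi_1)|_O$ need not vanish. To handle this I would invoke Theorem \ref{thm_ent} with $N\geq 2$ using the auxiliary function $V\phi_1\in C^\infty(M)$, which vanishes on $O$ (since $V|_O=0$) and, thanks to $\phi_1\in\mathcal K_{M,g,V}$, satisfies the clean identity $V\phi_1=-(-\Delta_g)^\alpha\phi_1$, giving $((-\Delta_g)^\beta(V\phi_1))|_O=-((-\Delta_g)^{\beta+\alpha}\phi_1)|_O$ for all $\beta>0$. Combining these with the iterated relations $((-\Delta_g)^\alpha(-\Delta_g-\lambda)^k\tilde w)|_O=-((-\Delta_g-\lambda)^k\phi_1)|_O$, $k\geq 1$, and choosing exponents compatible with the condition \eqref{diff_int}, the entanglement principle should yield a nontrivial polynomial identity $p(-\Delta_g-\lambda)\tilde w\equiv 0$ on $M$, thereby localizing $\tilde w$ to a finite-dimensional span of Laplace eigenspaces; higher-order elliptic unique continuation then forces $\tilde w\equiv 0$ and reduces matters to the previous case.
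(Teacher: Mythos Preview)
Your setup is correct and coincides with the paper's: the contradiction hypothesis, the solvability of $P_{M,g,\overline V}w=\phi-\Pi(\phi)$, the duality $(f,w)_{L^2(M)}=0$ for all $f\in\mathcal H_{M,g,V}^O$, and the reduction to a $\tilde w$ with $\tilde w|_O=0$ are exactly as in the paper. Your treatment of the case $\mathcal K_{M,g,V}=\{0\}$ is also essentially the paper's argument.

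The gap is in the general case $\mathcal K_{M,g,V}\neq\{0\}$. Applying $(-\Delta_g-\lambda)$ to the equation $(-\Delta_g)^\alpha\tilde w+\overline V\tilde w=\phi-\Pi(\phi)$ kills $\phi$ but leaves the residual $-(-\Delta_g-\lambda)\phi_1$, whose restriction to $O$ contains the term $-\lambda\phi_1|_O$. This term is \emph{not} of the form $(-\Delta_g)^{\alpha_j}v_j|_O$ with $v_j|_O=0$: elements of $\mathcal K_{M,g,V}$ do not vanish on $O$ (by the very $N=1$ case of Theorem~\ref{thm_ent} you invoke), and your identity $((-\Delta_g)^\beta(V\phi_1))|_O=-((-\Delta_g)^{\beta+\alpha}\phi_1)|_O$ only rewrites terms involving $(-\Delta_g)^j\phi_1$ for $j\ge 1$, never the zeroth-order term $\phi_1$ itself. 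The sentence ``choosing exponents compatible with \eqref{diff_int}, the entanglement principle should yield a nontrivial polynomial identity $p(-\Delta_g-\lambda)\tilde w\equiv 0$'' is precisely the missing step, not a proof of it. (One can in fact eliminate $\phi_1|_O$ by taking the right linear combination of the $k=1$ and $k=2$ iterates, arriving at a two-term relation with exponents $\alpha$ and $1-\alpha$; but this still fails when $\alpha=1/2$, and in any case you have not carried it out.)

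The paper's key move is different and cleaner: instead of applying the local operator $-\Delta_g-\lambda$, it applies the \emph{nonlocal} operator $(-\Delta_g)^\alpha+V$ to the equation $(-\Delta_g)^\alpha w+\overline V w=\phi-\Pi(\phi)$. This annihilates $\Pi(\phi)$ outright (since $\Pi(\phi)\in\mathcal K_{M,g,V}$), and after restricting to $O$ and then applying $-\Delta_g-\lambda$, one obtains directly
\[
(-\Delta_g)^{2\alpha}v_1=(-\Delta_g)^{\alpha}v_2\quad\text{on }O,\qquad v_1=(\Delta_g+\lambda)w,\quad v_2=-(\Delta_g+\lambda)(\overline V w),
\]
with $v_1|_O=v_2|_O=0$. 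Theorem~\ref{thm_ent} with exponents $2\alpha,\alpha$ then applies when $\alpha\neq 1/2$, and the case $\alpha=1/2$ is handled separately by noting that $(-\Delta_g)^{2\alpha}=-\Delta_g$ is local. Replacing your application of $-\Delta_g-\lambda$ by $(-\Delta_g)^\alpha+V$ is the missing idea.
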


\begin{proof}
\noindent{\bf Case I}. Assume that $\mathcal{K}_{M,g,V} = \{0\}$. To show \eqref{S_ortho_100}, we shall argue by contradiction. We assume that
\begin{equation}
\label{S_ortho_trivial_case}
(S_{M,g,V}(f), \phi)_{L^2(M)} = 0 \quad \forall f \in C^\infty_0(O).
\end{equation}
By Proposition \ref{prop_direct} and Remark \ref{rem_direct}, the equation
\begin{equation}
\label{S_ortho_trivial_case_2}
(-\Delta_g)^\alpha v + \overline{V} v = \phi \quad \text{on} \quad M
\end{equation}
has a unique solution $v \in C^\infty(M)$. It follows from \eqref{S_ortho_trivial_case} and \eqref{S_ortho_trivial_case_2} that
\[
0=(S_{M,g,V}(f),\phi)_{L^2(M)}=(S_{M,g,V}(f),(-\Delta_g)^\alpha v+\overline{V}v)_{L^2(M)} = (f,v)_{L^2(M)},
\]
for all $f \in C^\infty_0(O)$. This shows that $v|_{O} = 0$.

Applying $-\Delta_g$ to \eqref{S_ortho_trivial_case_2}, we obtain
 \begin{equation}
	\label{S_ortho_trivial_case_3} 
(-\Delta_g)^\alpha (-\Delta_g) v+(-\Delta_g)(\overline{V}v)=\lambda \phi \quad  \text{on} \quad M, 
\end{equation} 
and multiplying \eqref{S_ortho_trivial_case_2} by $-\lambda$ yields
\begin{equation}
	\label{S_ortho_trivial_case_4} 
-\lambda(-\Delta_g)^\alpha v-\lambda \overline{V}v=-\lambda\phi \quad  \text{on} \quad M.
\end{equation}
Adding \eqref{S_ortho_trivial_case_3} and \eqref{S_ortho_trivial_case_4}, we obtain
\begin{equation}
	\label{S_ortho_trivial_case_5} 
(-\Delta_g)^\alpha v_1=(-\Delta_g-\lambda)(\overline{V}v)  \quad  \text{on} \quad M, 
\end{equation}  
where
\begin{equation}
	\label{S_ortho_trivial_case_6} 
v_1=(\Delta_g+\lambda)v.
\end{equation} 
Since $v|_{O} = 0$, we conclude from \eqref{S_ortho_trivial_case_5} and \eqref{S_ortho_trivial_case_6} that $((-\Delta_g)^\alpha v_1)|_{\mathcal{O}} = 0$ and $v_1|_{O} = 0$. By Theorem \ref{thm_ent} with $N = 1$, thanks to the fact that $M$ is connected, we conclude that $v_1 = 0$ on $M$. In view of \eqref{S_ortho_trivial_case_6}, we get $(\Delta_g + \lambda)v = 0$ on $M$. As $v|_{O} = 0$, by the elliptic unique continuation, we conclude that $v = 0$ on $M$. Therefore, it follows from \eqref{S_ortho_trivial_case_2} that $\phi = 0$ on $M$, which is a contradiction.

\noindent{\bf Case II}. Assume that $\dim(\mathcal{K}_{M,g,V}) = N$, where $1 \leq N < \infty$. First, we claim that $\phi \notin \mathcal{K}_{M,g,V}$. Indeed, if $\phi \in \mathcal{K}_{M,g,V}$, then $\lambda^\alpha \phi + V \phi = 0$ in $M$. Given that $V|_{O} = 0$ and $\lambda \neq 0$, it follows that $\phi|_{O} = 0$. By the unique continuation property, $\phi = 0$ in $M$, which is a contradiction. Thus, the claim follows.

Now, to prove \eqref{S_ortho_100}, we proceed similarly to Case I and argue by contradiction. We assume that
\begin{equation}
\label{S_ortho}
(S_{M,g,V}(f), \phi)_{L^2(M)} = 0 \quad \forall f \in \mathcal{H}_{M,g,V}^O.
\end{equation}
Let $\{\eta_k\}_{k=1}^N \subset \mathcal{K}_{M,g,V}$ be an $L^2(M)$-orthonormal basis for $\mathcal{K}_{M,g,V}$. Let
$$\Pi: L^2(M) \to \mathcal{K}_{M,g,V}$$
be the projection operator defined by
$$\Pi(u) = \sum_{k=1}^N (u, \eta_k)_{L^2(M)} \eta_k \quad \forall u \in L^2(M).$$
Using the definition of $S_{M,g,V}(f)$ (see Definition~\ref{def_S}) and \eqref{S_ortho}, we deduce that
\begin{equation}
\label{phi_proj}
(S_{M,g,V}(f), \phi - \Pi(\phi))_{L^2(M)} = 0 \quad \forall f \in \mathcal{H}_{M,g,V}^O.
\end{equation}

Next, using the fact that
$$(\phi - \Pi(\phi), v) = 0 \quad \forall v \in \mathcal{K}_{M,g,V},$$
we may use Proposition \ref{prop_direct} and Remark \ref{rem_direct} to deduce that there exists some solution $\tilde{w} \in C^{\infty}(M)$ to the equation
\begin{equation}
\label{lem_density_new_0}
(-\Delta_g)^\alpha \tilde{w} + \overline{V} \tilde{w} = \phi - \Pi(\phi) \quad \text{on } M.
\end{equation}
Using this equation together with \eqref{phi_proj}, we deduce that
\begin{equation}
\label{lem_density_new_1}
0 = (S_{M,g,V}(f), \phi - \Pi(\phi))_{L^2(M)} = (S_{M,g,V}(f), (-\Delta_g)^\alpha \tilde{w} + \overline{V} \tilde{w})_{L^2(M)} = (f, \tilde{w})_{L^2(O)},
\end{equation}
for all $f \in \mathcal{H}_{M,g,V}^O$.
	 
Let $\mathcal{K}_{M,g,\overline{V}} = \text{Span}\{\zeta_1, \dots, \zeta_N\}$, where $\zeta_1, \dots, \zeta_N \in C^\infty(M)$ is a collection of linearly independent functions on $M$. By the unique continuation principle for $(-\Delta_g)^{\alpha}$ (see Theorem \ref{thm_ent} with $N = 1$), we have that $\zeta_1|_{O}, \dots, \zeta_N|_{O}$ are also linearly independent on $O$. Let us define
\[
W = \text{Span}\{\zeta_1|_{O}, \dots, \zeta_N|_{O}\} \subset L^2(O).
\]
Writing $L^2(O) = W \oplus W^\perp$, we deduce that
\begin{equation}
\label{lem_density_new_1_2}
\tilde{w}|_{O} = \zeta + w_0,
\end{equation}
where $\zeta \in W$ and $w_0 \in W^\perp$, i.e.,
\begin{equation}
\label{lem_density_new_2}
(w_0, \zeta_k|_{O})_{L^2(O)} = 0 \quad \text{for all} \quad k = 1, \dots, N.
\end{equation}
We shall next show that the condition \eqref{lem_density_new_1} implies that $w_0 = 0$. To that end, let $\{h_\ell\}_{\ell=1}^\infty \subset C_0^\infty(O)$ be such that
\begin{equation}
\label{lem_density_new_3}
\|h_\ell - w_0\|_{L^2(O)} \to 0 \quad \text{as} \quad \ell \to \infty.
\end{equation}
It follows from \eqref{lem_density_new_2} and \eqref{lem_density_new_3} that
\begin{equation}
\label{lem_density_new_4}
\lim_{\ell \to \infty} (h_\ell, \zeta_k)_{L^2(O)} = 0 \quad \text{for all} \quad k = 1, \dots, N.
\end{equation}
By Lemma \ref{lem_analog_5_2}, there exist functions $\{\theta_k\}_{k=1}^N \subset C_0^\infty(O)$ such that
\begin{equation}
\label{lem_density_new_5}
(\theta_k, \zeta_j) = \delta_{kj} \quad \text{for all} \quad k, j = 1, \dots, N.
\end{equation}
Consider the sequence of functions $f_\ell \in C_0^\infty(O)$ defined by
\[
f_\ell = h_\ell - \sum_{j=1}^N (h_\ell, \zeta_j)_{L^2(O)} \theta_j, \quad \ell = 1,2, \dots.
\]
It follows from \eqref{lem_density_new_5} that $(f_\ell, \zeta_k)_{L^2(O)} = 0$ for all $k = 1, \dots, N$, $\ell=1,2,\dots$, and therefore, $f_\ell \in \mathcal{H}_{M,g,V}^O$ for all $\ell = 1, 2, \dots$. Thus, we conclude from \eqref{lem_density_new_1} and \eqref{lem_density_new_1_2} that
\begin{equation}
\label{lem_density_new_6}
(f_\ell, w_0)_{L^2(O)} = 0 \quad \text{for all} \quad \ell = 1, 2, \dots.
\end{equation}
We observe from \eqref{lem_density_new_3} and \eqref{lem_density_new_4} that
\begin{equation}
\label{lem_density_new_7}
\|f_\ell - w_0\|_{L^2(O)} \to 0 \quad \text{as} \quad \ell \to \infty.
\end{equation}
It follows from \eqref{lem_density_new_6} and \eqref{lem_density_new_7} that $(w_0, w_0)_{L^2(O)} = 0$ and therefore, $w_0 = 0$, proving the claim.

Thus, in view of \eqref{lem_density_new_1_2}, we see that $\tilde{w}|_{O} = \zeta$ where $\zeta \in W$. Letting $\tilde{\zeta} \in \mathcal{K}_{M,g,\overline{V}}$ be such that $\tilde{\zeta}|_{O} = \zeta$, and defining $w = \tilde{w} - \tilde{\zeta}$, we deduce from \eqref{lem_density_new_0} that the function $w$ satisfies
\begin{equation}
\label{def_w}
(-\Delta_g)^\alpha w + \overline{V} w = \phi - \Pi(\phi) \quad \text{on } M \quad \text{and} \quad w|_{O} = 0.
\end{equation}

We claim that the above equation implies that $\phi=0$ on $M$, which would yield a contradiction. To this end, we recall that $w, \phi \in C^{\infty}(M)$ and apply $(-\Delta_g)^\alpha +V$ to both sides of the above equation to obtain
\begin{equation}\label{w_1}
((-\Delta_g)^\alpha +V)((-\Delta_g)^\alpha + \overline V) w = ((-\Delta_g)^\alpha +V) \phi \ne 0 \quad \text{on $M$} \quad \text{and} \quad w|_{O}=0.
\end{equation}
Here we used that $\phi \notin \mathcal K_{M,g,V}$. Rearranging the above equation, we write
\begin{equation}\label{w_2}
(-\Delta_g)^{2\alpha} w + (-\Delta_g)^\alpha (\overline V w) - (-\Delta_g)^\alpha \phi = F,
\end{equation}
where
$$
F = V\phi - V\overline V w - V (-\Delta_g)^\alpha w.
$$
In particular, since $V=0$ on the set $O$, we deduce that the function $F$ vanishes on the set $O$. Therefore, 
  \begin{equation}\label{w_3}
  	(-\Delta_g)^{2\alpha} w + (-\Delta_g)^\alpha(\overline V w)- \lambda^\alpha \phi = 0 \quad \text{on $O$.}
  \end{equation}
Next, by applying $-\Delta_g$ to the equation \eqref{w_2} and noting again that $\Delta_g F$ also vanishes on the set $O$, we arrive at the equation 
 \begin{equation}\label{w_4}
	(-\Delta_g)^{1+2\alpha} w + (-\Delta_g)^{1+\alpha}(\overline V w)- \lambda^{1+\alpha} \phi = 0 \quad \text{on $O$.}
	\end{equation}
Finally, multiplying equation \eqref{w_3} with $-\lambda$ and adding it to \eqref{w_4} we obtain 
 \begin{equation}\label{w_5}
	(-\Delta_g)^{2\alpha} v_1 = (-\Delta_g)^{\alpha}v_2 \quad \text{on $O$,}
\end{equation}
where $v_1,v_2 \in C^{\infty}(M)$ are defined by
\begin{equation}
	\label{v_12_exp}
	v_1 = \Delta_g w +\lambda w \quad \text{and} \quad v_2= -\Delta_g(\overline V w) - \lambda \overline V w \quad \text{on $M$}.
\end{equation}
Note that as $w=0$ on the set $O$, we have $v_1=v_2=0$ on the set $O$ as well.

Let us recall that $\alpha \in (0,1)$ and consider two cases: $2\alpha = 1$ or $2\alpha \in (0,1) \cup (1,2)$.

\noindent{\bf Case II.1.} We assume that $\alpha \in (0,1/2) \cup (1/2,1)$. As $v_1|_{O} = v_2|_{O} = 0$, equation \eqref{w_5} implies that 
$$ 
v_1 = v_2 = 0 \quad \text{on $M$},
$$
thanks to Theorem~\ref{thm_ent}. Next, using the definition of $v_1$ in \eqref{v_12_exp}, the fact that $w|_{O} = 0$, together with the elliptic unique continuation principle, we deduce that 
$w$ must vanish identically on $M$. As $\lambda > 0$, equation \eqref{w_3} now implies that $\phi|_{O}$ must be zero, and subsequently by the unique continuation principle for elliptic equations, $\phi = 0$ on $M$, which yields a contradiction to it being an eigenfunction.

\noindent{\bf Case II.1.} We assume the remaining case $\alpha=1/2$. Then, it follows from \eqref{w_5} together with the fact that $v_1|_{O}=0$ that 
$$ 
(-\Delta_g)^{1/2} v_2 = 0 \quad \text{on $O$}, \quad \text{and} \quad v_2|_{O} = 0.
$$
From the unique continuation property for fractional operators, we have that $v_2 = 0$ everywhere on $M$, see Theorem~\ref{thm_ent}. Using the definition of $v_2$ together with unique continuation principles for elliptic equations, we deduce now that
\begin{equation}\label{Vbarw}
\overline{V} w = 0 \quad \text{on $M$}.
\end{equation}
Note that as $2\alpha = 1$ and as $w$ vanishes on $O$, the function $(-\Delta_g)^{2\alpha} w$ must also vanish on the set $O$. Returning to \eqref{w_3}, using the previous observation together with \eqref{Vbarw}, as well as the fact that $\lambda > 0$, we conclude that $\phi = 0$ on $O$. Using the unique continuation principle for elliptic equations, it follows that $\phi = 0$ on $M$, reaching a contradiction again.
\end{proof}

\begin{remark}
Note that when $\mathcal K_{M,g,V}= \{0\}$, the condition that $V|_{O}=0$ is not needed.
\end{remark}

\section{General reduction to an inverse spectral problem}
\label{sec_reduction_to_spectral_data}
The main goal of this section is to prove that the Cauchy data set $\mathcal C_{M,g,V}^O$ uniquely determines the eigenvalues and an $L^2(M)$ Schauder basis consisting of eigenfunctions of $-\Delta_g$ on $(M,g)$. We do not need to impose the assumption (H) on the set $O$ to achieve this reduction to an inverse spectral problem. 

\begin{proposition}
	\label{prop1}
	Let $\alpha \in (0,1)$. For $j=1,2,$ let $(M_j,g_j)$ be a smooth closed and connected Riemannian manifold and let $V_j \in C^{\infty}(M_j).$ Let $O\subset M_1\cap M_2$ be a nonempty connected open set such that $M_j\setminus\overline{O}$ is nonempty, and assume that $g_1|_{O}=g_2|_{O}$ and  $V_1|_{O}=V_2|_O=0.$ Suppose that
	$$ \mathcal C_{M_1,g_1,V_1}^{O} = \mathcal C_{M_2,g_2,V_2}^{O}.$$
	Then, for $j=1,2,$ there exists an $L^2(M_j)$ Schauder basis consisting of eigenfunctions $\{\psi_k^{(j)}\}_{k=0}^{\infty}\subset C^{\infty}(M_j)$ for $-\Delta_{g_j}$ on $(M_j,g_j)$ corresponding to (not necessarily distinct) eigenvalues 
	$$0=\mu_0^{(j)}<\mu_1^{(j)}\leq \mu_2^{(j)}\leq \mu_3^{(j)} \ldots$$
	such that given any $k=0,1,2,\ldots,$ there holds
\begin{equation}\label{Gelfand_claim} \mu_k^{(1)}= \mu_k^{(2)} \quad \text{and} \quad \psi_k^{(1)}(x)=\psi^{(2)}_k(x)\quad \forall \, x\in O. \end{equation} 
\end{proposition}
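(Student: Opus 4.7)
The plan is to extract the spectral data of $-\Delta_{g_j}$ from the Cauchy data by recasting it as a source-to-solution map and then identifying it with the heat kernel on $O\times O$.

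First, I would translate the Cauchy data equality into equality of source-to-solution maps. For $f\in\mathcal H^O_{M_j,g_j,V_j}$ and $u_j:=S_{M_j,g_j,V_j}(f)$, the pair $(u_j|_O,((-\Delta_{g_j})^\alpha u_j)|_O)$ lies in $\mathcal C^O_{M_j,g_j,V_j}$, and because $V_j|_O=0$ the second coordinate equals $f$. Propositions~\ref{prop_direct} and~\ref{prop_direct_2} together with Remark~\ref{rem_structure_H_set} show conversely that every element of $\mathcal C^O$ arises from such a source modulo the finite-dimensional kernel $\mathcal K_{M_j,g_j,V_j}$. Using Lemma~\ref{lem_analog_5_2} to handle the compatibility conditions arising from both $\mathcal K_{M_1,g_1,\overline{V_1}}$ and $\mathcal K_{M_2,g_2,\overline{V_2}}$ simultaneously, the equality of the Cauchy data sets will reduce to the equality $S_{M_1,g_1,V_1}(f)|_O=S_{M_2,g_2,V_2}(f)|_O$ on a common dense subspace of $C^\infty_0(O)$.

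Next, I would use this source-to-solution equality to extract the heat kernel $e^{t\Delta_{g_j}}$ on $O\times O$ for all $t>0$. Writing $(-\Delta_{g_j})^\alpha u_j=f-V_j u_j$ and applying the heat-semigroup representation~\eqref{frac_laplace_heat_inverse}, I would express $u_j|_O$ as an integral over $t$ of $(e^{t\Delta_{g_j}}f)(x)$ minus a propagated contribution of $V_j u_j$ coming from $M_j\setminus O$. The entanglement principle (Theorem~\ref{thm_ent}) then enters to control and disentangle these off-$O$ contributions across $j=1,2$: setting $v_j$ to be suitable differences of solutions built on the two manifolds, the simultaneous vanishing of several $(-\Delta_{g_j})^{\alpha_k} v_j$ on $O$ will force the $v_j$ themselves to vanish. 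The density lemma (Lemma~\ref{density_lemma}) is essential here, guaranteeing that every eigenfunction of $-\Delta_{g_j}$ is ``seen'' by the source-to-solution map so no spectral information is lost. The outcome of this step will be the equality $e^{t\Delta_{g_1}}(x,y)=e^{t\Delta_{g_2}}(x,y)$ for all $x,y\in O$ and $t>0$.

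Finally, from the spectral expansion $e^{t\Delta_g}(x,y)=\sum_k e^{-t\lambda_k}\phi_k(x)\phi_k(y)$ on $O\times O$, the distinct eigenvalues $\lambda_k$ can be read off from large-$t$ asymptotics, and the Schwartz kernels of the spectral projectors $\pi_k$ on $O\times O$ can be extracted as residues. A Gram--Schmidt procedure inside each eigenspace using the $L^2(O)$-inner product---legitimate because, by unique continuation (the $N=1$ case of Theorem~\ref{thm_ent}), no eigenfunction can restrict to zero on $O$---will produce Schauder bases $\{\psi_k^{(j)}\}$ of eigenfunctions of $-\Delta_{g_j}$ with the matching property $\psi_k^{(1)}|_O=\psi_k^{(2)}|_O$. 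The hard part will be the middle step: unlike the case $V\equiv 0$ treated in~\cite{FGKU}, where $(-\Delta_g)^\alpha$ and $-\Delta_g$ commute and a clean Mellin-type extraction of the heat kernel is possible, here the potential $V$ (nonzero off $O$) couples different spectral components in a nontrivial way, and disentangling this coupling is precisely where Theorem~\ref{thm_ent} is indispensable.
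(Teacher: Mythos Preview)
Your outline has the right architecture—source-to-solution map, heat-semigroup representation, spectral extraction—but the middle step contains a genuine gap, and the paper takes a different route there.

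\textbf{The gap.} You propose to arrive at $e^{t\Delta_{g_1}}(x,y)=e^{t\Delta_{g_2}}(x,y)$ for $x,y\in O$, and to use Theorem~\ref{thm_ent} to ``disentangle the off-$O$ contributions'' by forming differences $v_j$ of solutions built on the two manifolds. But Theorem~\ref{thm_ent} is a statement about functions on a \emph{single} manifold; it cannot be applied to compare objects living on $M_1$ and $M_2$ simultaneously. More concretely, what the heat-semigroup representation yields (after the integration-by-parts argument of Lemma~\ref{lem_reduce_to_heat}) is the equality on $O$ of $e^{t\Delta_{g_j}}(f-V_ju_j)$, not of $e^{t\Delta_{g_j}}f$. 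The extra piece $e^{t\Delta_{g_j}}(V_ju_j)$ involves $V_ju_j$ supported on $M_j\setminus O$, and there is no mechanism here to strip it off and recover the pure heat kernel on $O\times O$. A second, smaller issue: the equality $S_{M_1,g_1,V_1}(f)|_O=S_{M_2,g_2,V_2}(f)|_O$ need not hold on the nose when the kernels $\mathcal K_{M_j,g_j,V_j}$ are nontrivial; the paper absorbs this ambiguity by introducing auxiliary solutions $\tilde u_1,\tilde u_2$ satisfying $u_1|_O=\tilde u_2|_O$ and $u_2|_O=\tilde u_1|_O$.

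\textbf{What the paper does instead.} Rather than extracting the heat kernel, Lemma~\ref{lem_reduce_to_heat} gives
\[
\big(e^{t\Delta_{g_1}}(-\Delta_{g_1})^\alpha u_1\big)(x)=\big(e^{t\Delta_{g_2}}(-\Delta_{g_2})^\alpha \tilde u_2\big)(x),\qquad t>0,\ x\in O,
\]
which in spectral form reads $\sum_k(\lambda_k^{(1)})^\alpha e^{-\lambda_k^{(1)}t}(\pi_k^{(1)}u_1)(x)=\sum_k(\lambda_k^{(2)})^\alpha e^{-\lambda_k^{(2)}t}(\pi_k^{(2)}\tilde u_2)(x)$. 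Taking the Laplace transform in $t$ produces meromorphic functions of $z$ with simple poles at $-\lambda_k^{(j)}$, and a residue argument matches the eigenvalues and the restrictions $(\pi_k^{(j)}u_j)|_O$. The density lemma (Lemma~\ref{density_lemma})—whose proof is where Theorem~\ref{thm_ent} with $N\ge 2$ actually enters—then guarantees that as $f$ ranges over $\mathcal H^O$, the projections $\pi_k^{(1)}u_1$ span the full eigenspace, yielding equal multiplicities and the desired matching Schauder bases. So the entanglement principle is used not to compare the two manifolds directly but, via Lemma~\ref{density_lemma}, to ensure no eigenfunction is invisible to the source-to-solution map on a single manifold.
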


For the remainder of this section, we will assume that the hypotheses of Proposition~\ref{prop1} are satisfied and proceed to prove it via a series of lemmas. In the statement of the following lemma, we make use of this notation 
$$ \mathcal K_{M,g,V}^{O}=\{ v|_{O}\,:\, v\in \mathcal K_{M,g,V}\}.$$ 
We state the following lemma that follows trivially from the equality of the Cauchy data sets $\mathcal C_{M_1,g_1,V_1}^O=\mathcal C_{M_2,g_2,V_2}^O$.
\begin{lemma}
	\label{lem_K}
There holds,
\begin{equation}\label{K_1} \mathcal K_{M_1,g_1,V_1}^{O} =  \mathcal K_{M_2,g_2,V_2}^{O}.
\end{equation}
\end{lemma}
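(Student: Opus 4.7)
The plan is to translate membership in $\mathcal{K}_{M_j,g_j,V_j}^{O}$ into a statement about the Cauchy data sets $\mathcal{C}_{M_j,g_j,V_j}^{O}$ and then invoke the assumed equality of these sets. The key observation making the lemma almost automatic is that the hypothesis $V_j|_O = 0$ forces the second coordinate of any Cauchy pair arising from a global solution to vanish on $O$.

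To prove the inclusion $\mathcal{K}_{M_1,g_1,V_1}^{O} \subset \mathcal{K}_{M_2,g_2,V_2}^{O}$, I would start with an arbitrary $v \in \mathcal{K}_{M_1,g_1,V_1}$. By definition, $P_{M_1,g_1,V_1} v = 0$ on all of $M_1$, so in particular on $M_1 \setminus \overline{O}$, and restricting the identity $(-\Delta_{g_1})^\alpha v + V_1 v = 0$ to $O$ and using $V_1|_O = 0$ yields $((-\Delta_{g_1})^\alpha v)|_O = 0$. Hence the pair $(v|_O, 0)$ lies in $\mathcal{C}_{M_1,g_1,V_1}^{O} = \mathcal{C}_{M_2,g_2,V_2}^{O}$, so there exists $u \in C^\infty(M_2)$ with $P_{M_2,g_2,V_2} u = 0$ on $M_2 \setminus \overline{O}$ and
\[
(u|_O,\, ((-\Delta_{g_2})^\alpha u)|_O) = (v|_O,\, 0).
\]

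To upgrade $u$ to an element of $\mathcal{K}_{M_2,g_2,V_2}$, I then check that $P_{M_2,g_2,V_2} u = 0$ on all of $M_2$. On the open set $O$, the condition $V_2|_O = 0$ together with $((-\Delta_{g_2})^\alpha u)|_O = 0$ gives $(P_{M_2,g_2,V_2} u)|_O = 0$ directly. Since $(-\Delta_{g_2})^\alpha$ is a classical pseudodifferential operator of order $2\alpha$ acting on a smooth function, $P_{M_2,g_2,V_2} u \in C^\infty(M_2)$; as it vanishes on the two open sets $O$ and $M_2 \setminus \overline{O}$, continuity forces it to vanish on $\overline{O}$, hence on all of $M_2$. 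Therefore $u \in \mathcal{K}_{M_2,g_2,V_2}$ and $v|_O = u|_O \in \mathcal{K}_{M_2,g_2,V_2}^{O}$. Swapping the roles of the two triples yields the reverse inclusion and hence \eqref{K_1}.

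The argument is essentially mechanical; I do not anticipate a genuine obstacle. The only subtlety worth highlighting is the nonlocality of $(-\Delta_{g_j})^\alpha$: the hypothesis $g_1|_O = g_2|_O$ does not make the two fractional Laplacians agree on $O$, but this is not required, since the Cauchy data already record the values of the full global operators on $O$, and the vanishing of the potentials there is exactly what converts ``solution on $M_j \setminus \overline{O}$ with trivial fractional data'' into ``solution on all of $M_j$''.
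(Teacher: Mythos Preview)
Your argument is correct and is precisely the unpacking of what the paper asserts without proof: the paper simply says the lemma ``follows trivially from the equality of the Cauchy data sets $\mathcal C_{M_1,g_1,V_1}^O=\mathcal C_{M_2,g_2,V_2}^O$,'' and your proof supplies the details of that triviality. The only minor remark is that one actually needs only $V_1|_O=V_2|_O$ (not both zero) for the step $(P_{M_2,g_2,V_2}u)|_O=0$, but since the standing hypotheses include $V_1|_O=V_2|_O=0$ your use of the stronger assumption is harmless.
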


Furthermore, using $(\textrm{UCP})'$, see Theorem~\ref{thm_ent} with $N=1$, we conclude from the equality of the Cauchy data sets $\mathcal{C}_{M_1,g_1,V_1}^O = \mathcal{C}_{M_2,g_2,V_2}^O$ and $V_1|_{O} = V_2|_{O}$ that either 
\begin{equation}
\label{eq_H_O-new_1_1} 
\mathcal{K}_{M_1, g_1, V_1} = \mathcal{K}_{M_2, g_2, V_2} = \{0\} 
\end{equation}
or 
\begin{equation}
\label{eq_H_O-new_1_2}
\dim(\mathcal{K}_{M_1, g_1, V_1}) = \dim(\mathcal{K}_{M_2, g_2, V_2}) = N,
\end{equation}
for some $1 \le N < \infty$. 

Now, recalling Definition \ref{def_S}, we note that 
\begin{equation}
\label{eq_H_O-new_1}
\mathcal{H}_{M_1, g_1, V_1}^O = \mathcal{H}_{M_2, g_2, V_2}^O,
\end{equation} 
when \eqref{eq_H_O-new_1_1} holds. In view of Lemma~\ref{lem_K} together with $\mathcal{K}_{M, g, \overline{V}} = \overline{\mathcal{K}_{M, g, V}}$, we note that given any $f \in C^{\infty}_0(O)$, there holds
\begin{equation}
\label{K2}
(f, v)_{L^2(O)} = 0 \quad \forall\, v \in \mathcal{K}_{M_1, g_1, \overline{V_1}} \quad \iff \quad (f, v)_{L^2(O)} = 0 \quad \forall\, v \in \mathcal{K}_{M_2, g_2, \overline{V_2}}.
\end{equation}
This shows that \eqref{eq_H_O-new_1} also holds when \eqref{eq_H_O-new_1_2} takes place.

Let $f \in \mathcal H_{M_1,g_1,V_1}^O$ be arbitrary and consider two functions $$u_1=S_{M_1,g_1,V_1}(f) \quad \text{and} \quad u_2=S_{M_2,g_2,V_2}(f).$$ 
Note that for $j=1,2,$
\begin{equation}\label{u_1} 
	(-\Delta_{g_j})^\alpha u_j + V_j u_j = f \quad \text{on $M_j$}.
\end{equation}
As $\mathcal C^O_{M_1,g_1,V_1}= \mathcal C^O_{M_2,g_2,V_2}$ and $V_1|_{O}=V_2|_{O}$, we deduce that there exists some $\tilde u_1 \in C^{\infty}(M_1)$ and $\tilde u_2\in C^{\infty}(M_2)$ satisfying
\begin{equation} \label{u_2}
	(-\Delta_{g_j})^\alpha \tilde u_j + V_j \tilde u_j = f \quad \text{on $M_j$} \quad j=1,2,
	\end{equation}
with the additional property that
\begin{equation}
	\label{u_12}
	u_1(x)= \tilde{u}_2(x) \quad \text{and}\quad \tilde{u}_1(x)=u_2(x)\quad  \forall\, x\in O.
\end{equation}

Note that when \eqref{eq_H_O-new_1_1} holds, we have $u_1 = \tilde{u}_1$ on $M_1$ and $u_2 = \tilde{u}_2$ on $M_2$. In what follows, we shall proceed by writing four solutions $u_1$, $\tilde{u}_1$, $u_2$, and $\tilde{u}_2$ to treat both cases \eqref{eq_H_O-new_1_1} and \eqref{eq_H_O-new_1_2} simultaneously, keeping in mind the equality of solutions in the case of \eqref{eq_H_O-new_1_1}.

We remark that in the following lemma, the assumption that $V_1|_O = V_2|_O = 0$ is crucial.
\begin{lemma} 
	\label{lem_reduce_to_heat}
	Assume that $V_1|_{O} = V_2|_{O} = 0$. Let $f \in \mathcal H_{M_1,g_1,V_1}^O = \mathcal H^O_{M_2,g_2,V_2}$ be arbitrary, and let $u_1 = S_{M_1,g_1,V_1}(f) \in C^\infty(M_1)$ and $u_2 = S_{M_2,g_2,V_2}(f) \in C^\infty(M_2)$. Let $\tilde{u}_1 \in C^\infty(M_1)$ and $\tilde{u}_2 \in C^\infty(M_2)$ be given by \eqref{u_2} and \eqref{u_12}. Then there holds,
$$
(e^{t\Delta_{g_1}}(-\Delta_{g_1})^\alpha u_1)(x) = (e^{t\Delta_{g_2}}(-\Delta_{g_2})^\alpha \tilde{u}_2)(x) \quad \forall\, t \in (0,\infty) \quad \forall\, x \in O,
$$
and
$$
(e^{t\Delta_{g_1}}(-\Delta_{g_1})^\alpha \tilde{u}_1)(x) = (e^{t\Delta_{g_2}}(-\Delta_{g_2})^\alpha u_2)(x) \quad \forall\, t \in (0,\infty) \quad \forall\, x \in O.
$$
\end{lemma}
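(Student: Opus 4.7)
The plan is to first establish that the difference
\[
\Phi(t,x) := (e^{t\Delta_{g_1}} u_1)(x) - (e^{t\Delta_{g_2}} \tilde u_2)(x)
\]
vanishes identically for $t > 0$ and $x \in O$, and then to deduce the stated identity via \eqref{frac_laplace_heat_pointwise}. The starting point is the matching condition
\[
(-\Delta_{g_1})^{\alpha+k} u_1\big|_O = (-\Delta_{g_2})^{\alpha+k} \tilde u_2\big|_O, \qquad k=0,1,2,\ldots,
\]
which follows by applying the \emph{local} operator $(-\Delta_{g_j})^k$ to the equation $(-\Delta_{g_j})^\alpha u_j + V_j u_j = f$ on $M_j$, using that $V_j$ and all its derivatives vanish on $O$ (so that $(-\Delta_{g_j})^k(V_j u_j)|_O = 0$ by Leibniz) together with $(-\Delta_{g_1})^k f = (-\Delta_{g_2})^k f$ on $O$ (since $\operatorname{supp} f \subset O$ and $g_1|_O = g_2|_O$).

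Next, I write $(-\Delta_{g_j})^{\alpha+k} u_j = (-\Delta_{g_j})^{\alpha}[(-\Delta_{g_j})^k u_j]$, apply \eqref{frac_laplace_heat_pointwise} to the smooth function $(-\Delta_{g_j})^k u_j$, and use $e^{t\Delta_{g_j}}(-\Delta_{g_j})^k = (-1)^k \partial_t^k e^{t\Delta_{g_j}}$. Taking the difference between the two manifolds, the $t$-independent contributions $(-\Delta_{g_j})^k u_j|_O$ agree (again by locality and $u_1|_O = \tilde u_2|_O$) and cancel out, yielding
\[
\int_0^\infty \partial_t^k \Phi(t,x)\, \frac{dt}{t^{1+\alpha}} = 0, \qquad x \in O,\quad k=0,1,2,\ldots.
\]
Since $\partial_t^l \Phi(0^+, x) = (-1)^l[(-\Delta_{g_1})^l u_1 - (-\Delta_{g_2})^l \tilde u_2](x) = 0$ for every $l \ge 0$ and $x \in O$, integrating by parts $k$ times produces no boundary contributions and yields the equivalent moments $\int_0^\infty \Phi(t,x)\, t^{-1-\alpha-k}\, dt = 0$ for all $k \ge 0$.

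The direct application of Lemma~\ref{lem_stone} is obstructed by the fact that $\Phi(t,x)$ tends to the possibly nonzero constant $\bar u_1 - \bar u_2$ (difference of the mean values of $u_1$ and $\tilde u_2$) as $t \to \infty$. I circumvent this by passing to $\Psi(t,x) := \partial_t \Phi(t,x) = (e^{t\Delta_{g_1}} \Delta_{g_1} u_1)(x) - (e^{t\Delta_{g_2}} \Delta_{g_2} \tilde u_2)(x)$. Since $\Delta_{g_j} u_j$ has zero mean on the closed manifold $M_j$, the bound \eqref{heat_kernel_bound_2} together with Sobolev embedding as in \eqref{eq_100_5_1} gives $|\Psi(t,x)| \le C e^{-\beta t}$ for $t \ge 1$. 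For the behavior near $t = 0^+$, I express each term of $\Psi$ as $\int_{M_j} e^{t\Delta_{g_j}}(x,y) (\Delta_{g_j} u_j)(y)\, dV_{g_j}(y)$ and split into integrals over $O$ and $M_j \setminus O$. The contributions from $M_j \setminus O$ decay like $O(e^{-c/t})$ by the Gaussian estimate \eqref{Gauss_heat} as in \eqref{eq_100_7}; for the contributions from $O$, where $g_1|_O = g_2|_O$ and $\Delta_{g_1} u_1|_O = \Delta_{g_2} \tilde u_2|_O$, I invoke the standard locality of short-time heat kernel asymptotics, giving $|e^{t\Delta_{g_1}}(x,y) - e^{t\Delta_{g_2}}(x,y)| = O(e^{-c/t})$ for $x,y$ in a compact subset of $O$, whence $|\Psi(t,x)| \le C e^{-c/t}$ for $t \in (0,1]$. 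An additional integration by parts converts the moments of $\Phi$ into $\int_0^\infty \Psi(t,x)\, t^{-1-\alpha-l}\, dt = 0$ for $l \ge 0$, and after the change of variable $s = 1/t$ the function $h(s) := \Psi(1/s,x)\, s^{\alpha-1}$ satisfies the hypotheses of Lemma~\ref{lem_stone}. Thus $\Psi \equiv 0$ on $O \times (0,\infty)$; since $\Phi(0^+, x) = u_1(x) - \tilde u_2(x) = 0$ on $O$, we obtain $\Phi \equiv 0$ on $O \times (0,\infty)$.

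To finish, I use the commutation $e^{t\Delta_{g_j}} A_j = (-\Delta_{g_j})^\alpha e^{t\Delta_{g_j}} u_j$ (with $A_j := (-\Delta_{g_j})^\alpha u_j$) together with \eqref{frac_laplace_heat_pointwise} applied to the smooth function $e^{t\Delta_{g_j}} u_j$, writing $e^{t\Delta_{g_j}} A_j(x)$ as a multiple of $\int_0^\infty [e^{(t+s)\Delta_{g_j}} u_j(x) - e^{t\Delta_{g_j}} u_j(x)]\, s^{-1-\alpha}\, ds$. The vanishing of $\Phi$ on $O \times (0,\infty)$ makes both pieces of the integrand coincide at $x \in O$ for every $s, t > 0$, which gives the first claimed identity; the second follows by swapping the roles of $u_j$ and $\tilde u_j$ throughout. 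The main obstacle in this plan is the exponential-in-$1/t$ estimate on $\Psi$ as $t \to 0^+$, which relies on the standard but nontrivial locality of short-time heat kernel asymptotics comparing the two manifolds on their common region.
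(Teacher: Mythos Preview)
Your argument is correct, but it follows a genuinely different technical route from the paper's proof, and the comparison is instructive.

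The paper works not with $\Phi(t,x)=e^{t\Delta_{g_1}}u_1-e^{t\Delta_{g_2}}\tilde u_2$ but with
\[
\rho(t,x)=\big(e^{t\Delta_{g_1}}(f-V_1u_1)\big)(x)-\big(e^{t\Delta_{g_2}}(f-V_2\tilde u_2)\big)(x),
\]
obtained by inverting via \eqref{frac_laplace_heat_inverse} rather than expanding via \eqref{frac_laplace_heat_pointwise}. The payoff is that $f-V_ju_j$ is supported in $\operatorname{supp}(f)\cup(M_j\setminus O)$, so if one restricts $x$ to an open set $\omega\subset\subset O$ chosen disjoint from $\operatorname{supp}(f)$, then \emph{each term of $\rho$ separately} is $O(e^{-\tilde c/t})$ by the Gaussian bound \eqref{Gauss_heat}. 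No comparison of heat kernels on two different manifolds is needed. The price is that one only obtains $\rho\equiv 0$ on the smaller set $\omega$, and the paper then invokes unique continuation for the heat equation on $(0,\infty)\times O$ to upgrade this to all of $O$.

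Your route instead extracts the small-$t$ decay of $\Psi=\partial_t\Phi$ from the \emph{difference} $e^{t\Delta_{g_1}}(x,y)-e^{t\Delta_{g_2}}(x,y)$ on the common region, which is the ``not feeling the boundary'' principle for the heat kernel. This is correct and standard, and your remark that it is the main nontrivial input is on point. Two small clarifications worth making explicit: (i) the integration by parts at $t=0^+$ is justified because $\partial_t^l\Phi(0,x)=0$ for \emph{all} $l\ge 0$ on $O$, so Taylor's theorem gives $\partial_t^{k-j}\Phi(t,x)=O(t^N)$ for every $N$, dominating any inverse power of $t$; (ii) for the $O$-integral in your splitting, you should take a further intermediate compact $\omega\subset\subset\omega'\subset\subset O$: on $\omega'$ the heat-kernel locality bound applies (both $x$ and $y$ in a fixed compact set), while on $O\setminus\omega'$ each kernel is individually $O(e^{-c/t})$ by \eqref{Gauss_heat} since $\operatorname{dist}(\omega,O\setminus\omega')>0$. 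With this, your stated compact-subset version of locality indeed suffices.

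In summary: the paper's approach trades the heat-kernel comparison for a support trick plus parabolic unique continuation; your approach trades parabolic unique continuation for heat-kernel locality, and as a bonus yields the stronger intermediate statement $e^{t\Delta_{g_1}}u_1\big|_{(0,\infty)\times O}=e^{t\Delta_{g_2}}\tilde u_2\big|_{(0,\infty)\times O}$, from which the lemma follows cleanly via \eqref{frac_laplace_heat_pointwise} applied to $e^{t\Delta_{g_j}}u_j$.
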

The proof of the above lemma follows similar ideas to those in the proof of \cite[Theorem 1.1]{FGKU}, but with some modifications; see also the proof of Lemma \ref{lem_ent_new} above. 
\begin{proof}
We only prove the first claim in the lemma as the second claim follows by symmetry. Letting $m = 1, 2, \dots$ and applying the operator $\Delta_{g_1}^m$ to equation \eqref{u_1} with $j = 1$ and using the functional calculus, we get
\begin{equation}
\label{eq_300_1}
(-\Delta_{g_1})^{\alpha} \Delta_{g_1}^m u_1=\Delta_{g_1}^m(f-V_1u_1) \quad \text{on}\quad M_1.
\end{equation}
Similarly, applying $\Delta_{g_2}^m$ to the equation \eqref{u_2} with $j=2$, we obtain that   	
\begin{equation}
\label{eq_300_2}
(-\Delta_{g_2})^{\alpha} \Delta_{g_2}^m \tilde u_2=\Delta_{g_2}^m(f-V_2\tilde u_2) \quad \text{on}\quad M_2.
\end{equation}	
Since $\Delta_{g_1}^m(f-V_1u_1)\in C^\infty(M_1)$ and $\Delta_{g_2}^m(f-V_2\tilde u_2) \in C^\infty(M_2)$ are such that    
\[
(\Delta_{g_1}^m(f-V_1u_1), 1)_{L^2(M_1)}=0 \quad\text{and} \quad (\Delta_{g_2}^m(f-V_2\tilde u_2), 1)_{L^2(M_2)}=0,
\]
 we conclude from  \eqref{eq_300_1} and \eqref{eq_300_2} that 	
\begin{equation}
\label{eq_300_3}
\Delta_{g_1}^m u_1=(-\Delta_{g_1})^{-\alpha} (\Delta_{g_1}^m(f-V_1u_1)) \quad \text{on}\quad M_1,
\end{equation}	
and 	
\begin{equation}
\label{eq_300_4}
\Delta_{g_2}^m \tilde u_2=(-\Delta_{g_2})^{-\alpha} (\Delta_{g_2}^m(f-V_2\tilde u_2)) \quad \text{on}\quad M_2,
\end{equation}	
for $m = 1, 2, \dots$.  Using \eqref{u_12} and the fact that $g_1|_{O} = g_2|_{O}$, we obtain from \eqref{eq_300_3} and \eqref{eq_300_4} that
\begin{equation}
\label{eq_300_5}
\big((-\Delta_{g_1})^{-\alpha} \Delta_{g_1}^m(f-V_1u_1)\big)|_{O}= \big((-\Delta_{g_2})^{-\alpha} \Delta_{g_2}^m(f-V_2\tilde u_2)\big)|_{O}. 
\end{equation}
Using \eqref{frac_laplace_heat_inverse}, we get from \eqref{eq_300_5} that 
\begin{equation}
\label{eq_300_6}
	\int_0^\infty \bigg(\big(e^{t\Delta_{g_1}} \Delta_{g_1}^m(f-V_1u_1)\big)(x) - \big(e^{t\Delta_{g_2}} \Delta_{g_2}^m(f-V_2\tilde u_2)\big)(x)\bigg) \frac{1}{t^{1-\alpha}}dt=0,
\end{equation}
for $x\in O$, and $m=1,2,\dots$.  Note that the functions $e^{t\Delta_{g_1}} \Delta_{g_1}^m(f - V_1u_1) \in C^\infty([0,\infty); C^\infty(M_1))$ and $e^{t\Delta_{g_2}} \Delta_{g_2}^m(f - V_2\tilde{u}_2) \in C^\infty([0,\infty); C^\infty(M_2))$. In view of the bound \eqref{eq_100_-1}, the integral in \eqref{eq_300_6} converges uniformly for $x \in O$.

Using \eqref{eq_100_4}, we derive from  \eqref{eq_300_6}	
that 
\begin{equation}
\label{eq_300_7}
	\int_0^\infty \p_t^m\bigg(\big(e^{t\Delta_{g_1}} (f-V_1u_1)\big)(x) - \big(e^{t\Delta_{g_2}} (f-V_2\tilde u_2)\big)(x)\bigg) \frac{1}{t^{1-\alpha}}dt=0,
\end{equation}
for $x\in O$, and $m=1,2,\dots$.

Let $K := \supp(f) \subset O$ and let $\omega \subset\subset O$ be such that $K \cap \overline{\omega} = \emptyset$. Assuming that $x \in \omega$, we shall integrate by parts in \eqref{eq_300_7} $m$ times. We claim that there will be no contributions from the endpoints. Here, our assumption that $V_j|_O = 0$ is crucial.

To show that no contribution arises at $t = 0$ when integrating by parts in \eqref{eq_300_7}, using \eqref{eq_100_4}, we first write, for $t > 0$ and $x \in \omega$,
\begin{equation}
\label{eq_300_8}
\begin{aligned}
\p_t^l\big(e^{t\Delta_{g_1}} (f-V_1u_1)\big)(x)=&\int_{K}   e^{t\Delta_{g_1}}(x,y) (\Delta_{g_1}^l f)(y)dV_{g_1}(y)\\
&-
\int_{M_1\setminus O} e^{t\Delta_{g_1}}(x,y) (\Delta_{g_1}^l(V_1u_1)) (y)dV_{g_1}(y),
\end{aligned}
\end{equation}
for $l=0,1,\dots, m-1$.  Using \eqref{Gauss_heat}, we obtain from \eqref{eq_300_8} for $0<t<1$ and $x\in \omega$ that 
\begin{equation}
\label{eq_300_9}
\begin{aligned}
|\p_t^l\big(e^{t\Delta_{g_1}} (f-V_1u_1)\big)(x)|& \le  \|e^{t\Delta_{g_1}}(\cdot,\cdot)\|_{L^\infty(\omega\times K)}\|\Delta_{g_1}^l f\|_{L^1(M_1)}\\
&+  \|e^{t\Delta_{g_1}}(\cdot,\cdot)\|_{L^\infty(\omega\times (M\setminus O))}\|\Delta_{g_1}^l (V_1u_1)\|_{L^1(M_1)}\\
&\le Ce^{-\frac{\tilde c}{t}}(\|\Delta_{g_1}^l f\|_{L^1(M_1)}+ \|\Delta_{g_1}^l (V_1u_1)\|_{L^1(M_1)}),
\end{aligned}
\end{equation}
for $l=0,1,\dots, m-1$.  Here, $\tilde{c} > 0$ depends on $\textrm{dist}_{g}(K, \overline{\omega}) > 0$ and $\textrm{dist}_{g}(\overline{\omega}, M \setminus O) > 0$. The bound \eqref{eq_300_9} and a similar bound for $|\partial_t^l \big(e^{t\Delta_{g_2}} (f - V_2 \tilde{u}_2)\big)(x)|$ show that no contribution arises at $t = 0$ when integrating by parts in \eqref{eq_300_7}.

To demonstrate that no contribution arises at $t = +\infty$ when integrating by parts in \eqref{eq_300_7}, we use the stochastic completeness of the heat kernel,
\[
\int_{M} e^{t\Delta_{g_1}}(x,y) \, dV_g(y) = 1,
\]
for $t > 0$ and $x \in M$, see \cite[Ch. VIII, Theorem 5, page 191]{Chavel_book}. We conclude from \eqref{eq_300_8} that for $t > 0$ and $x \in \omega$
\begin{equation}
\label{eq_300_10}
|\p_t^l\big(e^{t\Delta_{g_1}} (f-V_1u_1)\big)(x)| \le \|\Delta_{g_1}^l f\|_{L^\infty(M_1)}+ \|\Delta_{g_1}^l (V_1u_1)\|_{L^\infty(M_1)},
\end{equation}
for $l = 0, 1, \dots, m - 1$. The bound \eqref{eq_300_10}, together with a similar bound for $|\partial_t^l \big(e^{t\Delta_{g_2}} (f - V_2 \tilde{u}_2)\big)(x)|$, shows that no contribution arises at $t = +\infty$ when integrating by parts in \eqref{eq_300_7} thanks to the presence of the factor $t^{-(1-\alpha)}$. 

Now integrating by parts $m$ times in \eqref{eq_300_7}, we obtain
\begin{equation}
\label{eq_300_11}
	\int_0^\infty \bigg(\big(e^{t\Delta_{g_1}} (f-V_1u_1)\big)(x) - \big(e^{t\Delta_{g_2}} (f-V_2\tilde u_2)\big)(x)\bigg) \frac{1}{t^{1+m-\alpha}}dt=0,
\end{equation}
for $x \in \omega$, and $m = 1, 2, \dots$. Rewriting \eqref{eq_300_11} as
\begin{equation}
\label{eq_300_12}
	\int_0^\infty \bigg(\big(e^{t\Delta_{g_1}} (f-V_1u_1)\big)(x) - \big(e^{t\Delta_{g_2}} (f-V_2\tilde u_2)\big)(x)\bigg) \frac{1}{t^{2+m-\alpha}}dt=0,
\end{equation}
for $x \in \omega$, and $m = 0, 1, 2, \dots$, and introducing the change of variables $s = 1/t$, we deduce that
\begin{equation}
\label{eq_300_13}
\int_0^\infty Q(s)s^{m}ds=0, \quad m=0,1,2,\dots, 
\end{equation}
where
\[
Q(s)=\big(\big(e^{\frac{1}{s}\Delta_{g_1}} (f-V_1u_1)\big)(x) - \big(e^{\frac{1}{s}\Delta_{g_2}} (f-V_2\tilde u_2)\big)(x)\big) s^{-\alpha},
\]
for $x \in \omega$.

Recalling estimates \eqref{eq_300_9} and \eqref{eq_300_10} with $l = 0$, we observe that for $s > 0$,
\begin{equation}
\label{eq_300_14}
|Q(s)| \le \mathcal{O}(1) \frac{e^{-cs}}{s^\alpha},
\end{equation}
where $c > 0$. Let
\[
\mathcal{F}(1_{[0,\infty)}Q)(\xi) = \int_0^\infty Q(s) e^{-i\xi s} \, ds
\]
be the Fourier transform of $1_{[0,\infty)}Q$. Thanks to the bound \eqref{eq_300_14}, an application of \cite[Theorem 3.3.7]{Lerner} demonstrates that the function $\mathcal{F}(1_{[0,\infty)}Q)(\xi)$ is holomorphic for $\text{Im}\ \xi < c$. 

It follows from \eqref{eq_300_13} that $\mathcal{F}(1_{[0,\infty)}Q)$ vanishes at $0$ with all derivatives, and hence, $Q(s) = 0$ for $s > 0$. Thus, letting 
\[
\rho(t, x) := \big(e^{t\Delta_{g_1}} (f - V_1 u_1)\big)(x) - \big(e^{t\Delta_{g_2}} (f - V_2 \tilde{u}_2)\big)(x),
\]
we get
\begin{equation}
\label{eq_300_15}
\rho(t, x) = 0, \quad t > 0, \quad x \in \omega.
\end{equation}
Note that the function $\rho|_{(0,\infty)\times \overline{O}} \in C^\infty((0,\infty) \times \overline{O})$, and since $g_1|_{O} = g_2|_{O}$, we see that $\rho$ satisfies the heat equation
\begin{equation}
\label{eq_300_16}
(\p_t-\Delta_{g_1})\rho=0\quad \text{in}\quad (0,\infty)\times O. 
\end{equation}
In view of \eqref{eq_300_16}, \eqref{eq_300_15}, and the fact that $O$ is connected, it follows by the unique continuation for the heat equation, see \cite[Sections 1 and 4]{Lin}, that
\begin{equation}
\label{eq_300_17}
\big(e^{t\Delta_{g_1}} (f-V_1u_1)\big)(x) =\big(e^{t\Delta_{g_2}} (f-V_2\tilde u_2)\big)(x),\quad t>0, \quad x\in O.
\end{equation}
Using \eqref{u_1} with $j = 1$ and \eqref{u_2} with $j = 2$, we obtain from \eqref{eq_300_17} that
\[
\big(e^{t\Delta_{g_1}} (-\Delta_{g_1})^\alpha u_1\big)(x) =\big(e^{t\Delta_{g_2}} (-\Delta_{g_2})^\alpha \tilde u_2\big)(x),\quad t>0, \quad x\in O,
\]
showing the claim of the lemma.
\end{proof}

We are ready to state the proof of Proposition~\ref{prop1}. In the proof of the proposition, we will use the notation $\{(\lambda_k^{(j)},\phi_{k,\ell}^{(j)})\}_{k=0}^{\infty}$, $d_k^{(j)}$, $\pi_k^{(j)}$ for $j=1,2$, to stand for the spectral decomposition of $-\Delta_{g_j}$ on $M_j$ as described in Section~\ref{sec_pre}. 
\begin{proof}[Proof of Proposition~\ref{prop1}]
	Note that $\lambda_0^{(1)}=\lambda_0^{(2)}=0$ and that both corresponding eigenfunctions are nontrivial constant functions on $M_j$, $j=1,2$. Therefore, the crux of the proof lies in proving similar claims for the strictly positive eigenvalues of $(M_j,g_j)$, $j=1,2$ and their corresponding eigenfunctions. To this end, let $f\in \mathcal H_{M_1,g_1,V_1}^O= \mathcal H_{M_2,g_2,V_2}^O$ and $u_1=S_{M_1,g_1,V_1}(f)$ and let $u_2=S_{M_2,g_2,V_2}(f).$  Also, let $\tilde{u}_1$, $\tilde{u}_2$ be as in \eqref{u_2} such that \eqref{u_12} is satisfied. We note that in view of Lemma~\ref{lem_reduce_to_heat} there holds
	\begin{equation}
		\label{u_12_heat_kernel}
	 (e^{t\Delta_{g_1}}(-\Delta_{g_1})^\alpha u_1)(x)=  (e^{t\Delta_{g_2}}(-\Delta_{g_2})^\alpha \tilde{u}_2)(x) \quad \forall\, t\in [0,\infty) \quad \forall\,x\in O,
	\end{equation}
and analogously that
	\begin{equation}
	\label{u_tilde_12_heat_kernel}
	(e^{t\Delta_{g_1}}(-\Delta_{g_1})^\alpha \tilde{u}_1)(x)=  (e^{t\Delta_{g_2}}(-\Delta_{g_2})^\alpha u_2)(x) \quad \forall\, t\in [0,\infty) \quad \forall\,x\in O.
\end{equation}
Writing the above two equalities in terms of the spectral representation of the heat kernel $e^{t\Delta_{g_j}}$ on $(M_j,g_j)$, we obtain
	\begin{equation}
	\label{u_12_spectral_eq}
\sum_{k=1}^{\infty}  (\lambda_k^{(1)})^\alpha e^{-\lambda_k^{(1)}t}\, (\pi_k^{(1)}\,u_1)(x)= \sum_{k=1}^{\infty}  (\lambda_k^{(2)})^\alpha e^{-\lambda_k^{(2)}t}  (\pi_k^{(2)}\,\tilde{u}_2)(x) \quad t\ge 0, \quad x\in O,
\end{equation}
and
	\begin{equation}
	\label{u_tilde_12_spectral_eq}
	\sum_{k=1}^{\infty}  (\lambda_k^{(1)})^\alpha e^{-\lambda_k^{(1)}t}\, (\pi_k^{(1)}\,\tilde{u}_1)(x)= \sum_{k=1}^{\infty}  (\lambda_k^{(2)})^\alpha e^{-\lambda_k^{(2)}t}  (\pi_k^{(2)}\,u_2)(x) \quad t\ge  0, \quad x\in O,
\end{equation}
We claim that each of the four series above is uniformly convergent for $x$ in the corresponding manifold $M_1$ or $M_2$ and $t \ge 0$ due to the smoothness of the functions $u_j$ and $\tilde{u}_j$ on $M_j$, $j = 1, 2$. Indeed, let us show this for the first series in \eqref{u_12_spectral_eq}; the proof for the others is similar. See also \cite[Section 2]{Fei21}.
 In doing so, 
we first recall that 
\begin{equation}
\label{eq_400_1}
\pi_k^{(1)}\,u_1=\sum_{l=1}^{d_k^{(1)}}(u_1,\phi_{k,l}^{(1)})_{L^2(M_1)}\,\phi_{k,l}^{(1)}.
\end{equation}
Using that $-\Delta_{g_1} \phi_{k,l}^{(1)} = \lambda_k^{(1)} \phi_{k,l}^{(1)}$ on $M_1$, for  $l=1,\dots, d_k^{(1)}$, $k=1,2,\dots$,  and the Cauchy--Schwarz inequality, we get
\begin{equation}
\label{eq_400_2}
|(u_1,  \phi_{k,l}^{(1)})_{L^2(M_1)}|=(\lambda_k^{(1)})^{-m}|((-\Delta_{g_1})^m u_1, \phi_{k,l}^{(1)})_{L^2(M_1)}|\le (\lambda_k^{(1)})^{-m}\|(-\Delta_{g_1})^m u_1\|_{L^2(M_1)},
\end{equation}
for any $m\in \N$.  It follows from \eqref{eq_400_1} and \eqref{eq_400_2} that 
\begin{equation}
\label{eq_400_1_new_1}
\|\pi_k^{(1)}\,u_1\|_{L^\infty(M_1)}\le (\lambda_k^{(1)})^{-m}\|(-\Delta_{g_1})^m u_1\|_{L^2(M_1)} \sum_{l=1}^{d_k^{(1)}}\|\phi_{k,l}^{(1)}\|_{L^\infty(M_1)}.
\end{equation}
To proceed, let us recall the following sup-norm estimate for $L^2$-normalized eigenfunctions: there exists a constant $C>0$ such that
\begin{equation}
\label{eq_400_2_2}
\| \phi_{l,k}^{(1)}\|_{L^\infty(M_1)} \le C(\lambda_k^{(1)})^{\frac{n-1}{4}},
\end{equation}
for all $\lambda_k^{(1)} \ge 1$; see \cite[Sections 3.2, formula (3.2.2)]{Sogge_book_eigenfunctions}. 
Furthermore, the following consequence of Weyl's law holds: there exists a constant $C > 0$ such that 
\begin{equation}
\label{eq_400_3_new_1}
N(\lambda) \le C \lambda^{\frac{n}{2}},
\end{equation}
for $\lambda \ge 1$ sufficiently large, where $N(\lambda)$ is the number of eigenvalues of $-\Delta_{g_1}$, counted with multiplicity, that are $\le \lambda$;  see \cite[Theorem 3.3.1]{Sogge_book_eigenfunctions}. 
It follows from \eqref{eq_400_3_new_1} that 
\begin{equation}
\label{eq_400_3}
d_k^{(1)} \le C (\lambda_{k}^{(1)})^{\frac{n}{2}}, \quad \lambda_k^{(1)} \ge C^{-\frac{2}{n}} k^{\frac{2}{n}},
\end{equation}
for $k\ge 1$ sufficiently large. 

Fixing $m \in \N$ such that $m - \alpha - \frac{(3n-1)}{4} \ge n$, and using \eqref{eq_400_1_new_1}, \eqref{eq_400_2_2}, and \eqref{eq_400_3}, we obtain that 
\begin{equation}
\label{eq_400_4}
\begin{aligned}
(\lambda_k^{(1)})^\alpha e^{-\lambda_k^{(1)} t}\, \| \pi_{k}^{(1)}u_1\|_{L^\infty(M_1)}  \le C (\lambda_k^{(1)})^\alpha (\lambda_k^{(1)})^{-m} d_k^{(1)}(\lambda_k^{(1)})^{\frac{n-1}{4}}\|(-\Delta_{g_1})^m u_1\|_{L^2(M_1)}\\
\le  C \frac{1}{k^{\frac{2}{n}(m - \alpha - \frac{(3n-1)}{4})}} \|(-\Delta_{g_1})^m u_1\|_{L^2(M_1)} \le \frac{C}{k^2}\|(-\Delta_{g_1})^m u_1\|_{L^2(M_1)}, \quad t \ge 0,
\end{aligned}
\end{equation}
for $k\ge 1$ sufficiently large.
The bound \eqref{eq_400_4} shows that the first series in \eqref{u_12_spectral_eq} converges uniformly for $x \in M_1$ and $t \ge 0$, establishing the claim.

By taking the Laplace transform 
$$\mathcal L(h)(z)= \int_0^{\infty} h(t)\, e^{-zt}\,dt, \quad \textrm{Re}(z)>0,$$
of the equations  \eqref{u_12_spectral_eq} and \eqref{u_tilde_12_spectral_eq} for each fixed $x\in O$, we arrive at the equalities
	\begin{equation}
	\label{resolvent_1}
	\sum_{k=1}^{\infty}  \frac{(\lambda_k^{(1)})^\alpha\,(\pi_k^{(1)}\,u_1)(x)}{\lambda_k^{(1)}+z}= \sum_{k=1}^{\infty}  \frac{(\lambda_k^{(2)})^\alpha\,(\pi_k^{(2)}\,\tilde{u}_2)(x)}{\lambda_k^{(2)}+z} \quad x\in O, \quad \textrm{Re}\,z>0.
\end{equation}
and
\begin{equation}
	\label{resolvent_2}
	\sum_{k=1}^{\infty}  \frac{(\lambda_k^{(1)})^\alpha\,(\pi_k^{(1)}\,\tilde{u}_1)(x)}{\lambda_k^{(1)}+z}= \sum_{k=1}^{\infty}  \frac{(\lambda_k^{(2)})^\alpha\,(\pi_k^{(2)}\,u_2)(x)}{\lambda_k^{(2)}+z} \quad x\in O, \quad \textrm{Re}\,z>0.
\end{equation}
Letting  $\Omega_j:=\C\setminus \bigcup_{k=1}^\infty\{-\lambda_k^{(j)}\}$, we define the functions, 
\[
 \mathcal R^{(j)}(z;x)=\sum_{k=1}^{\infty}  \frac{(\lambda_k^{(j)})^\alpha\,(\pi_k^{(j)}\,u_j)(x)}{\lambda_k^{(j)}+z}, \quad  z\in \Omega_j, \quad x\in M_j, 
 \]
and 
\[
 \widetilde{\mathcal R}^{(j)}(z;x)= \sum_{k=1}^{\infty}  \frac{(\lambda_k^{(j)})^\alpha\,(\pi_k^{(j)}\,\tilde{u}_j)(x)}{\lambda_k^{(j)}+z},\quad  z\in \Omega_j, \quad x\in M_j,
 \]
 for  $j=1,2$.  We claim that the functions $z \mapsto \mathcal{R}^{(j)}(z;x)$ and $z \mapsto \widetilde{\mathcal{R}}^{(j)}(z;x)$ are holomorphic in $\Omega_j$, with possibly simple poles at the points $z = -\lambda_k^{(j)}$, when $x \in M_j$, $j = 1, 2$. We shall show that the function $z \mapsto \mathcal{R}^{(1)}(z;x)$ is holomorphic in $\Omega_1$; the proof for the other functions is similar, see also \cite[Lemma 2.8]{Fei21}. In doing so, we first note that each term of the series in the definition of $\mathcal{R}^{(1)}(z;x)$ is holomorphic in $\Omega_1$, and therefore, the result will follow if we show that the series in the definition of $\mathcal{R}^{(1)}(z;x)$ converges uniformly on any compact subset of $\Omega_1$. To show this, let $K \subset \Omega_1$ be compact. We first claim that there exists a constant $c > 0$ such that
\begin{equation}
\label{eq_700_1}
|\lambda^{(1)}_k + z| \ge c \quad \text{for all } z \in K \text{ and all } k \in \mathbb{N}.
\end{equation}
Indeed, first note that for each fixed $k$, we have $\min_{z \in K} |\lambda^{(1)}_k + z| > 0$. Let $B(0, R)$ be an open ball centered at 0 with radius $R > 0$ such that $K \subset B(0, R)$. Since the set of eigenvalues $\{\lambda_k^{(1)}\}_{k=1}^\infty$ is discrete, only finitely many $-\lambda_k^{(1)}$ are in $\overline{B(0, R)}$. Therefore,
\[
\min_{\{-\lambda_k^{(1)} : -\lambda_k^{(1)} \in \overline{B(0, R)}\}} \min_{z \in K} |\lambda^{(1)}_k + z| > 0.
\]
For all $-\lambda_k^{(1)} \in \mathbb{C} \setminus \overline{B(0, R)}$, we have
\[
\min_{z \in K} |\lambda^{(1)}_k + z| = \text{dist}(K, -\lambda_k^{(1)}) \ge \text{dist}(K, \partial B(0, R)) > 0.
\]
Thus, the claim \eqref{eq_700_1} follows.

Now, using \eqref{eq_700_1}, similarly to \eqref{eq_400_4}, we obtain that for all $z\in K$,
\begin{equation}
\label{eq_700_2}
\frac{(\lambda_k^{(1)})^\alpha  \| \pi_{k}^{(1)}u_1\|_{L^\infty(M_1)} }{|\lambda_k^{(j)}+z|} \le \frac{C}{k^2}\|(-\Delta_{g_1})^m u_1\|_{L^2(M_1)},
\end{equation}
for $k \ge 1$ sufficiently large. Here, $m \in \mathbb{N}$ is fixed such that $m - \alpha - \frac{(3n-1)}{4} \ge n$. The bound \eqref{eq_700_2} shows that the series in the definition of $\mathcal{R}^{(1)}(z;x)$ converges uniformly on $K$, and therefore, the function $z \mapsto \mathcal{R}^{(1)}(z;x)$ is holomorphic in $\Omega_1$, proving the claim.

Now in view of the equalities \eqref{resolvent_1}-\eqref{resolvent_2} together with analytic continuation we have that for each $x\in O$ there holds
\begin{equation}\label{Resolvent_1_eq}
 \mathcal R^{(1)}(z;x) = \widetilde{\mathcal R}^{(2)}(z;x)  \quad z\in \C \setminus \bigcup_{k=1}^{\infty}\{-\lambda_k^{(1)},-\lambda_k^{(2)}\},
\end{equation}
and
\begin{equation}\label{Resolvent_2_eq}
	\widetilde{\mathcal R}^{(1)}(z;x) = {\mathcal R}^{(2)}(z;x) \quad z\in \C \setminus \bigcup_{k=1}^{\infty}\{-\lambda_k^{(1)},-\lambda_k^{(2)}\}.
\end{equation}

Next, using \eqref{Resolvent_1_eq} and \eqref{Resolvent_2_eq}, we shall show that
\begin{equation}
\label{eq_400_5}
\lambda_k^{(1)}=\lambda_k^{(2)}, \quad \text{and} \quad d_k^{(1)}=d_k^{(2)},
\end{equation}
for all $k=1,2,\dots$. To do this, let us start with $k=1$ and first assume that $\lambda_1^{(1)} \le \lambda_1^{(2)}$. Then, using \eqref{Resolvent_1_eq} and \eqref{resolvent_1}, we obtain for $x \in O$ that
\begin{equation}
\label{eq_400_6}
\begin{aligned}
(\lambda_1^{(1)})^{\alpha} (\pi_1^{(1)}u_1)(x) &= \lim_{z \to -\lambda_1^{(1)}} (z + \lambda_1^{(1)}) \mathcal{R}^{(1)}(z; x) = \lim_{z \to -\lambda_1^{(1)}} (z + \lambda_1^{(1)}) \widetilde{\mathcal{R}}^{(2)}(z; x) \\
&= 
\begin{cases} 
0, & \text{if} \quad \lambda_1^{(1)} \ne \lambda_1^{(2)}, \\
(\lambda_1^{(1)})^{\alpha} (\pi_1^{(2)}\tilde{u}_2)(x), & \text{if} \quad \lambda_1^{(1)} = \lambda_1^{(2)}.
\end{cases}
\end{aligned}
\end{equation}
Here we used that $\lambda_1^{(1)} \le \lambda_1^{(2)} < \lambda_2^{(2)} < \lambda_3^{(2)} < \dots$. Recalling that $u_1 = S_{M_1,g_1,V_1}(f)$ and using Lemma~\ref{density_lemma}, we see that there is $f \in \mathcal{H}_{M_1,g_1,V_1}^O$ such that $(u_1, \phi_{1,1}^{(1)})_{L^2(M_1)} \ne 0$. In view of 
\[
\pi_1^{(1)} u_1 = \sum_{l=1}^{d_1^{(1)}} (u_1, \phi^{(1)}_{1,l})_{L^2(M_1)} \phi_{1,l}^{(1)},
\]
and the fact that $\phi_{1,1}^{(1)}, \dots, \phi_{1,d_1^{(1)}}^{(1)}$ are linearly independent on $O$, 
we have that $\pi_1^{(1)} u_1 \ne 0$ on $O$. Therefore, \eqref{eq_400_6} implies that $\lambda_1^{(1)} = \lambda_1^{(2)}$. In the case when $\lambda_1^{(2)} \le \lambda_1^{(1)}$, we proceed similarly as above, but now using \eqref{Resolvent_2_eq} and \eqref{resolvent_2} instead of \eqref{Resolvent_1_eq} and \eqref{resolvent_1}. We obtain for $x \in O$ that
\begin{equation}
\label{eq_400_6_for_2}
\begin{aligned}
(\lambda_1^{(2)})^{\alpha} (\pi_1^{(2)} u_2)(x) &= \lim_{z \to -\lambda_1^{(2)}} (z + \lambda_1^{(2)}) \mathcal{R}^{(2)}(z; x) = \lim_{z \to -\lambda_1^{(2)}} (z + \lambda_1^{(2)}) \widetilde{\mathcal{R}}^{(1)}(z; x) \\
&=
\begin{cases} 
0, & \text{if} \quad \lambda_1^{(2)} \ne \lambda_1^{(1)}, \\
(\lambda_1^{(2)})^{\alpha} (\pi_1^{(1)} \tilde{u}_1)(x), & \text{if} \quad \lambda_1^{(2)} = \lambda_1^{(1)}.
\end{cases}
\end{aligned}
\end{equation}
As in the previous case, by choosing $f \in \mathcal{H}_{M_2,g_2,V_2}^O$ such that $(u_2, \phi_{1,1}^{(2)})_{L^2(M_2)} \ne 0$, we get $\pi_1^{(2)} u_2 \ne 0$ on $O$. Therefore, we conclude from \eqref{eq_400_6_for_2} that $\lambda_1^{(1)} = \lambda_1^{(2)}$ in this case as well. Proceeding similarly, by induction on $k = 2, 3, \dots$, we obtain that $\lambda_k^{(1)} = \lambda_k^{(2)}$.

With the equality $\lambda_k^{(1)} = \lambda_k^{(2)} =: \lambda_k$ established, we return to \eqref{eq_400_6} (as well as similar expressions with $\lambda_k$ and $\pi_k^{(j)}$, $k = 2, 3, \dots$, instead of $\lambda_1$ and $\pi_1^{(j)}$, $j = 1, 2$). We obtain that for $x \in O$,
\begin{equation}
\label{eq_400_7}
(\pi_k^{(1)} u_1)(x) = (\pi_k^{(2)} \tilde{u}_2)(x),
\end{equation}
where $u_1 = S_{M_1, g_1, V_1}(f)$ with $f \in \mathcal{H}_{M_1, g_1, V_1}^O$ being arbitrary. Here $k = 1, 2, \dots$. Note that for all $f \in \mathcal{H}_{M_1, g_1, V_1}^O$ such that $\pi_k^{(1)} u_1 \ne 0$, we have that $\pi_k^{(1)} u_1$ is an eigenfunction for $-\Delta_{g_1}$ on $M_1$ with the eigenvalue $\lambda_k$, and $\pi_k^{(2)} \tilde{u}_2$ is an eigenfunction for $-\Delta_{g_2}$ on $M_2$ with the eigenvalue $\lambda_k$. The equality \eqref{eq_400_7} shows that these eigenfunctions are equal on $O$. Letting  
\[
S := \textrm{Span}\{\pi_k^{(1)} u_1 : u_1 = S_{M_1, g_1, V_1}(f), f \in \mathcal{H}_{M_1, g_1, V_1}^O\} \subset \text{Ker}(-\Delta_{g_1} - \lambda_k),
\]
for all $k = 1, 2, \dots$, we next claim that
\begin{equation}
\label{eq_400_8}
S = \text{Ker}(-\Delta_{g_1} - \lambda_k).
\end{equation}
Indeed, to show the remaining inequality, we first note that if $\phi_{k, l}^{(1)} \in S$ for all $l = 1, \dots, d_k^{(1)}$, then the claim follows. If there exists $\phi_{k, l_0}^{(1)} \notin S$, then writing $\text{Ker}(-\Delta_{g_1} - \lambda_k) = S \oplus S^\perp$, we see that $\phi_{k, l_0}^{(1)} \in S^\perp$. Therefore,
\[
0 = (\pi_k^{(1)} u_1, \phi_{k, l_0}^{(1)})_{L^2(M_1)} = (u_1, \phi_{k, l_0}^{(1)})_{L^2(M_1)}
\]
for all $f \in \mathcal{H}_{M_1, g_1, V_1}^O$, which contradicts Lemma~\ref{density_lemma}, thereby proving the claim \eqref{eq_400_8}.

The equality \eqref{eq_400_8} shows that the set $\{\pi_k^{(1)} u_1 : u_1 = S_{M_1, g_1, V_1}(f), f \in \mathcal{H}_{M_1, g_1, V_1}^O\}$ contains $d_k^{(1)}$ linearly independent eigenfunctions for $-\Delta_{g_1}$ on $M_1$, associated with $\lambda_k$. According to \eqref{eq_400_7}, these eigenfunctions coincide on $O$ with $d_k^{(1)}$ eigenfunctions for $-\Delta_{g_2}$ on $M_2$, associated with $\lambda_k$. They should be linearly independent on $M_2$ because the linear dependence of eigenfunctions associated with a fixed eigenvalue is equivalent to the linear dependence of their restrictions on the set $O$, thanks to the unique continuation principle for elliptic equations. Hence, we get $d_k^{(2)} \geq d_k^{(1)}$.

Returning to \eqref{eq_400_6_for_2} (as well as similar expressions with $\lambda_k$ and $\pi_k^{(j)}$, $k = 2, 3, \dots$, instead of $\lambda_1$ and $\pi_1^{(j)}$, $j = 1, 2$), we obtain that for $x \in O$,
\begin{equation}
\label{eq_400_7_for_2}
(\pi_k^{(2)} u_2)(x) = (\pi_k^{(1)} \tilde{u}_1)(x),
\end{equation}
where $u_2 = S_{M_2, g_2, V_2}(f)$ with $f \in \mathcal{H}_{M_2, g_2, V_2}^O$ being arbitrary. Here $k = 1, 2, \dots$. Arguing similarly as after \eqref{eq_400_7}, now based on \eqref{eq_400_7_for_2}, we see that $d_k^{(1)} \geq d_k^{(2)}$, completing the establishment of \eqref{eq_400_5}.

Finally, to obtain the required $L^2(M_j)$ Schauder bases, we select $d_k^{(1)}$ linearly independent eigenfunctions for $-\Delta_{g_1}$ on $M_1$, associated with $\lambda_k$, from the set $\{\pi_k^{(1)} u_1 : u_1 = S_{M_1, g_1, V_1}(f), f \in \mathcal{H}_{M_1, g_1, V_1}^O\}$. The equality \eqref{eq_400_7} demonstrates the existence of $d_k^{(1)}$ linearly independent eigenfunctions for $-\Delta_{g_2}$ on $M_2$, associated with $\lambda_k$, of the form $\pi_k^{(2)} \tilde{u}_2$. This completes the proof. 
\end{proof}

\section{Variant of Gel'fand inverse spectral problem}
\label{sec_Gelfand}

The aim of this section is to give a proof of the variant of Gel'fand inverse spectral problem presented in Theorem~\ref{t2}. We will need the following lemma that are simpler instances of the more general observability estimates of \cite{BLR0,BLR} subject to the sharp so-called geometric control condition.
\begin{lemma}
	\label{lem_nontrapping}
	Let $(M,g)$ be a smooth closed and connected Riemannian manifold and assume that $O\subset M$ is a nonempty open set such that $M\setminus O$ is nonempty and nontrapping. There exists $C>0$ depending only on $(M,g)$ and $O$ such that
	$$ \|\phi\|_{L^2(M)} \leq C \|\phi\|_{L^2(O)},$$
	for any eigenfunction $\phi$ associated to $-\Delta_g$ on $M$.
	\end{lemma}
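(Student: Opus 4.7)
The plan is to obtain the estimate as a direct consequence of the Bardos--Lebeau--Rauch observability inequality for the wave equation on $(M,g)$, applied to the separated solution associated to each eigenfunction $\phi$.

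\emph{Step 1: Geometric control condition.} Since $M\setminus O$ is a compact subset of $M$ and is nontrapping for the geodesic flow, a compactness argument on the unit cosphere bundle of $M$ yields a uniform escape time $T_0>0$: every unit-speed geodesic segment of $(M,g)$ of length at least $T_0$ must enter $O$. This is precisely the geometric control condition for the control region $O$ with control time $T=T_0$. A minor technical point is that geodesics tangent to $\partial O$ from outside have to be handled, but this is done by slightly enlarging $O$ to an open set $O'\subset\subset O$ with $M\setminus O'$ still nontrapping, or equivalently by continuity of the geodesic flow together with smoothness of $g$.

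\emph{Step 2: Wave equation observability.} Under the geometric control condition, the classical interior observability estimate on the closed manifold $(M,g)$ yields a constant $C>0$ such that, for every solution $u$ of the wave equation $(\partial_t^2-\Delta_g)u=0$ on $\R\times M$ with initial data $(u_0,u_1)\in L^2(M)\times H^{-1}(M)$, there holds
\begin{equation*}
    \|u_0\|_{L^2(M)}^2+\|u_1\|_{H^{-1}(M)}^2 \;\leq\; C\int_0^T \|u(t,\cdot)\|_{L^2(O)}^2\,dt.
\end{equation*}

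\emph{Step 3: Application to eigenfunctions.} Let $\phi$ satisfy $-\Delta_g\phi=\lambda\phi$. If $\lambda=0$, then $\phi$ is constant on the connected manifold $M$ and the inequality is immediate with constant $\mathrm{Vol}_g(M)/\mathrm{Vol}_g(O)$. If $\lambda>0$, set $u(t,x):=\cos(\sqrt{\lambda}\,t)\,\phi(x)$, which solves the wave equation with initial data $(\phi,0)$. The observability estimate then gives
\begin{equation*}
    \|\phi\|_{L^2(M)}^2 \;\leq\; C\int_0^T \cos^2(\sqrt{\lambda}\,t)\,dt\;\|\phi\|_{L^2(O)}^2 \;\leq\; CT\,\|\phi\|_{L^2(O)}^2,
\end{equation*}
yielding the stated bound with a constant independent of $\lambda$.

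The main obstacle is Step 1, namely extracting a uniform escape time from the pointwise nontrapping hypothesis and producing a clean geometric control condition out of it. Once this is in place, Steps 2 and 3 are standard applications of \cite{BLR0,BLR}; in fact the estimate can be regarded as the simplest instance of the general observability estimates quoted in the introduction.
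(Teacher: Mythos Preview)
Your proposal is correct and follows essentially the same approach as the paper: establish the geometric control condition from the nontrapping hypothesis via a compactness argument on the unit (co)sphere bundle, invoke the Bardos--Lebeau--Rauch observability inequality for the wave equation, and then specialize to the separated solution $u(t,x)=\cos(\sqrt{\lambda}\,t)\phi(x)$. The paper carries out Step~1 in slightly more detail, using upper semi-continuity of the forward and backward exit time functions on $S(M\setminus O)$ to show $\sup(\tau_+-\tau_-)<\infty$, but the idea is the same as your compactness argument. One small slip: in Step~1 you write ``enlarging $O$ to an open set $O'\subset\subset O$'', which is inconsistent (that would be shrinking $O$); in any case this technical point about tangential geodesics is not a genuine obstacle and the paper handles it simply by citing the observability result for open control regions.
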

\begin{proof}

We start by proving that for all sufficiently large $T > 0$, the set $(0,T) \times O$ satisfies the geometric control condition of Bardos, Lebeau, and Rauch \cite{BLR0,BLR}, which states that all geodesic rays propagating in $M$ meet $O$ within time $T$. 

In doing so, we denote the unit tangent bundle of the manifold $M \setminus O$ by $S(M \setminus O)$, and for any $(x, v) \in S(M \setminus O)$, we denote by $\gamma_{x,v}$ the unique geodesic on $M \setminus O$ such that $\gamma_{x,v}(0) = x$ and $\dot{\gamma}_{x,v}(0) = v$. Here, we view $M \setminus O$ as a compact manifold with boundary embedded in the closed manifold $M$, and the geodesic vector field is defined on the whole of $M$. We consider the forward and backward exit time functions, defined by
\begin{align*}
&\tau_+: S(M \setminus O) \to [0, +\infty], \ \tau_+(x,v) = \sup\{t \ge 0: \gamma_{x,v}(s) \in M \setminus O\, \, \forall s \in [0, t]\},\\
&\tau_-: S(M \setminus O) \to [-\infty, 0], \ \tau_-(x,v) = -\sup\{t \ge 0: \gamma_{x,v}(s) \in M \setminus O\, \,  \forall s \in [-t, 0]\}.
\end{align*}
The fact that the manifold $M \setminus O$ is non-trapping means that $\tau_+(x,v) < +\infty$ and $-\tau_-(x,v) < +\infty$ for all $(x,v) \in S(M \setminus O)$. 

Now, if we show that
\begin{equation}
\label{T_def}
T_1 := \sup_{(x,v) \in S(M \setminus O)} (\tau_+(x,v) - \tau_-(x,v)) < +\infty,
\end{equation}
then the geometric control condition of Bardos, Lebeau, and Rauch \cite{BLR0,BLR} would be satisfied on the set $(0,T) \times O$ for all $T > T_1$. To prove \eqref{T_def}, suppose for the sake of contradiction that $T_1 = +\infty$. Then there exists a sequence $(x_k, v_k) \in S(M \setminus O)$ such that $\tau_+(x_k, v_k) - \tau_-(x_k, v_k) \to +\infty$ as $k \to \infty$. Since $S(M \setminus O)$ is compact, by passing to a subsequence, we may assume that $(x_k, v_k) \to (x_0, v_0) \in S(M \setminus O)$ as $k \to \infty$.  As the exit time functions $\tau_+$ and $-\tau_-$ are upper semi-continuous (see \cite[Section 2.1, page 538]{Guillarmou_Mazzucchelli_Tzou_2021}, see also \cite[Lemma 6.3]{Feizmohammadi_Lassas_Oksanen_2021}), we get
\[
\lim_{k \to \infty} (\tau_+(x_k, v_k) - \tau_-(x_k, v_k)) \leq \tau_+(x_0, v_0) - \tau_-(x_0, v_0)<+\infty.
\]
This contradiction shows the claim \eqref{T_def}.

Let us now fix some $T > T_1$. It is known that since $O$ is an open set, the observability for the wave equation holds when the set $(0,T) \times O$ satisfies the geometric control condition on $M$,  (see the main result of \cite{BLR0}, see also \cite[Theorem A.4]{BFMO} and \cite{Humbert_Privat_Trelat_2019}). Observability states that there exists $C > 0$ depending only on $(M,g)$, $O$, and $T$ such that
\begin{equation}
\label{control_bound}
\|u|_{t=0}\|_{L^2(M)} + \|\partial_t u|_{t=0}\|_{H^{-1}(M)} \leq C \|u\|_{L^2((0,T) \times O)}
\end{equation}
for any $u \in C^1((0,T);H^{-1}(M)) \cap C((0,T);L^2(M))$ that satisfies the wave equation
$$
\partial_t^2 u - \Delta_g u = 0 \quad \text{on } (0,T) \times M.
$$
Let $\phi \in C^{\infty}(M)$ be an eigenfunction for $-\Delta_g$ on $M$, namely
$$
-\Delta_g \phi = \lambda \phi \quad \text{on } M,
$$
for some $\lambda \geq 0$. Subsequently, define $u(t,x) = \cos(\sqrt{\lambda} t) \phi(x)$. Applying the estimate \eqref{control_bound} to the function $u$, we obtain
$$
\|\phi\|_{L^2(M)} \leq C \|\cos(\sqrt{\lambda} t)\|_{L^2((0,T))} \|\phi\|_{L^2(O)} \leq C \sqrt{T} \|\phi\|_{L^2(O)}.$$
The result follows.
\end{proof}

We also need the following lemma. Its proof is similar to that of Lemma \ref{lem_analog_5_2}, but instead of relying on the unique continuation principle for the fractional Laplacian, it relies on the unique continuation principle for elliptic operators. Therefore, the proof is omitted.
\begin{lemma}
	\label{lem_fixed_eigenspace}
	Let $(M,g)$ be a smooth closed and connected Riemannian manifold and assume that $\mathcal V\subset M$ is a nonempty open set. Let $\lambda\geq 0$ be an eigenvalue for $-\Delta_g$ on $M$ with multiplicity $d\geq 1$ and let $E_\lambda=\textrm{span}\,\{\theta_1,\ldots,\theta_d\}$ be the eigenspace associated to $\lambda$. Given any $c=(c_1,\ldots,c_d)\in \C^d$, there exists $f\in C^{\infty}_0(\mathcal V)$ such that $(f,\theta_\ell)_{L^2(\mathcal V)}=c_\ell$ for $\ell=1,\ldots,d$.
\end{lemma}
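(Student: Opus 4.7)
The plan is to adapt the strategy used in the proof of Lemma~\ref{lem_analog_5_2} and reduce the assertion to the classical unique continuation principle for second-order elliptic operators. Concretely, I would consider the linear map
\[
L: C^\infty_0(\mathcal{V}) \to \C^d, \quad L(f) = \big((f,\theta_1)_{L^2(\mathcal{V})},\ldots,(f,\theta_d)_{L^2(\mathcal{V})}\big),
\]
and prove that $L$ is surjective. Once this is established, picking any preimage of the prescribed vector $c = (c_1,\ldots,c_d)$ yields the required test function in $C^\infty_0(\mathcal{V})$.

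Surjectivity would be established by contradiction. If $L$ were not onto, its range would be a proper subspace of $\C^d$, so there would exist a nonzero vector $b = (b_1,\ldots,b_d) \in \C^d$ such that $L(f) \cdot b = 0$ for every $f \in C^\infty_0(\mathcal{V})$. Setting $\theta := \sum_{\ell=1}^d \overline{b_\ell}\, \theta_\ell \in E_\lambda$, this identity reads $(f,\theta)_{L^2(\mathcal{V})} = 0$ for all $f \in C^\infty_0(\mathcal{V})$, which forces $\theta|_{\mathcal{V}} = 0$.

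Since $\theta \in E_\lambda \subset C^\infty(M)$, it satisfies the elliptic equation $(-\Delta_g - \lambda)\theta = 0$ on the connected manifold $M$. The Aronszajn-type unique continuation principle for second-order elliptic operators with smooth coefficients then forces $\theta \equiv 0$ on $M$, since $\theta$ vanishes on the nonempty open subset $\mathcal{V}$. But $\theta_1,\ldots,\theta_d$ are linearly independent in $E_\lambda$ by hypothesis, so $\theta = 0$ would require $b = 0$, contradicting the choice of $b$. Hence $L$ must be surjective, and the lemma follows.

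There is no serious obstacle here: this is essentially a formal duality argument once elliptic unique continuation is invoked to rule out nontrivial eigenfunctions that vanish on a nonempty open set. The sole modification compared to the proof of Lemma~\ref{lem_analog_5_2} is the replacement of the nonlocal unique continuation principle (Theorem~\ref{thm_ent} with $N=1$) by the standard unique continuation principle for the \emph{local} second-order elliptic operator $-\Delta_g - \lambda$.
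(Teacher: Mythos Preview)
Your proposal is correct and follows exactly the approach the paper indicates: the paper explicitly states that the proof is similar to that of Lemma~\ref{lem_analog_5_2}, with the unique continuation principle for the fractional Laplacian replaced by the standard elliptic unique continuation principle, and then omits the details. Your write-up supplies precisely those details.
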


\begin{proof}[Proof of Theorem~\ref{t2}]
Let $p \in O$ be as in (H) and subsequently consider a fixed $q^{(1)} \in \mathcal A_{M_1,g_1}(p)$ and also a fixed $q^{(2)}\in \mathcal A_{M_2,g_2}(p)$. We may assume, without loss of generality, that
\begin{equation}
	\label{pq_rel}
	\textrm{dist}_{g_1}(p,q^{(1)}) \geq \textrm{dist}_{g_2}(p,q^{(2)}),
\end{equation}
as the other case can be dealt with analogously. 

For the sake of simplicity in the presentation, we will assume without any loss of generality that $\{\psi^{(1)}_k\}_{k=0}^{\infty}$ is an $L^2(M_1)$-orthonormal basis. By applying the Gram-Schmidt orthogonalization procedure to $\{\psi^{(1)}_k\}_{k=0}^{\infty}$ and simultaneously performing the same operations on $\{\psi^{(2)}_k\}_{k=0}^{\infty}$ at each step, we can always reduce to the case where $\{\psi^{(1)}_k\}_{k=0}^{\infty}$ forms an $L^2(M_1)$-orthonormal basis. Specifically, by performing the same operations, we mean the following: at the first step of the Gram-Schmidt orthogonalization procedure, we let
\begin{equation}
\label{eq_101_1}
\widetilde{\psi}^{(1)}_0 = \frac{\psi^{(1)}_0}{\|\psi^{(1)}_0\|_{L^2(M_1)}}, \quad \widetilde{\psi}^{(2)}_0 = \frac{\psi^{(2)}_0}{\|\psi^{(1)}_0\|_{L^2(M_1)}}.
\end{equation}
Now, proceeding by induction, at the $k$-th step, we set
\begin{equation}
\label{eq_101_2}
\widehat{\psi}^{(1)}_k = \psi^{(1)}_k - \sum_{l=0}^{k-1} c_{kl} \widetilde{\psi}_l^{(1)},
\end{equation}
where 
\[
c_{kl} = (\psi^{(1)}_k, \widetilde{\psi}^{(1)}_l)_{L^2(M_1)}, \quad l = 0, 1, \dots, k-1,
\]
for $k = 1, 2, \dots$. We observe that $(\widehat{\psi}^{(1)}_k, \widetilde{\psi}^{(1)}_l)_{L^2(M_1)} = 0$ for all $l = 0, 1, \dots, k-1$, and $\widehat{\psi}^{(1)}_k \neq 0$ as $\psi^{(1)}_k \notin \text{Span}\{\widetilde{\psi}_0^{(1)}, \dots, \widetilde{\psi}_{k-1}^{(1)}\} = \text{Span}\{\psi_0^{(1)}, \dots, \psi_{k-1}^{(1)}\}$, for $k = 1, 2, \dots$. We then let
\begin{equation}
\label{eq_101_3}
\widetilde{\psi}^{(1)}_k = \frac{\widehat{\psi}^{(1)}_k}{\|\widehat{\psi}^{(1)}_k\|_{L^2(M_1)}}, \quad \widetilde{\psi}^{(2)}_k = \frac{\psi^{(2)}_k - \sum_{l=0}^{k-1} c_{kl} \widetilde{\psi}_l^{(2)}}{\|\widehat{\psi}^{(1)}_k\|_{L^2(M_1)}},
\end{equation}
for $k = 1, 2, \dots$. It follows that $\{\widetilde{\psi}^{(1)}_k\}_{k=0}^{\infty}$ forms an $L^2(M_1)$-orthonormal basis. Furthermore, in view of \eqref{eq_101_1}, \eqref{eq_101_2}, and \eqref{eq_101_3}, the equality \eqref{psi_eq} implies that 
\[
\widetilde{\psi}^{(1)}_k(x) = \widetilde{\psi}^{(2)}_k(x), \quad x \in O,
\]
 for $k = 0, 1, 2, \dots$. We also note that at the end of this procedure, the (Schauder) basis $\{\widetilde{\psi}^{(2)}_k\}_{k=0}^{\infty}$ is not necessarily orthonormal. Despite this, our aim for the remainder of the proof is to show that the latter basis is actually also an orthonormal basis of $L^2(M_2)$. In what follows, we shall return to denoting the bases by $\{\psi^{(1)}_k\}_{k=0}^{\infty}$ and $\{\psi^{(2)}_k\}_{k=0}^{\infty}$.

We start by claiming that there exists $\varepsilon_0 \in (0,1)$ sufficiently small such that for all $0 < \varepsilon \le \varepsilon_0$, the following holds:
\begin{itemize}
\item[{\bf(P)}]{if $x \in \{y \in M_2 \,:\, \textrm{dist}_{g_2}(p,y) \geq \textrm{dist}_{g_2}(p,q^{(2)}) - \varepsilon\}$ then $x \in O$.}
\end{itemize}
To show (P), we suppose, for the sake of contradiction, that the claim is not true. Then, there exists a sequence $0<\varepsilon_k\le \frac{1}{k}$ and a sequence of points $x_k \in M_2$, $k \in \mathbb{N}$, with the property that
\[ 
\textrm{dist}_{g_2}(p, x_k) \geq \textrm{dist}_{g_2}(p, q^{(2)}) - \varepsilon_k \quad \text{and} \quad x_k \notin O \quad \forall k \in \mathbb{N}. 
\]
As $M_2 \setminus O$ is a closed set and $M_2$ is compact, it follows that $M_2 \setminus O$ is also compact. Therefore, there exists $x_0 \in M_2 \setminus O$ and a subsequence $x_{k_l}$ of the sequence $x_k$ such that $x_{k_l} \to x_0$ as $l \to \infty$. Hence,
\[ 
\textrm{dist}_{g_2}(p, x_0) = \lim_{l \to \infty} \textrm{dist}_{g_2}(p, x_{k_l}) \geq \textrm{dist}_{g_2}(p, q^{(2)}). 
\]
But then, by the definition of antipodal sets, it follows that $x_0 \in \mathcal{A}_{M_2, g_2}(p)$ and consequently via (H) that $x_0 \in O$, a contradiction. This shows the claim (P).

Given any $x\in O$ and any $r>0$ sufficiently small, let us denote by $\mathbb B_r(x)$ the open geodesic ball centred at the point $x\in O$ with radius $r$. Note that there is no ambiguity in the definition of small geodesic balls on the set $O$ as $g_1|_{O}=g_2|_{O}$. For the remainder of this proof, we will fix a $\varepsilon \in (0,1)$ sufficiently small so that 
\begin{equation}\label{balls_cond} \overline{\mathbb B_{\varepsilon}(q^{(1)}) \cup \mathbb B_{\varepsilon}(p)} \subset O,\end{equation}
and also that property (P) above is satisfied. Let us define open  connected sets $\mathcal V,\mathcal W \subset O$ by 
$$ \mathcal W= \mathbb B_{\frac{\varepsilon}{2}}(p) \quad \text{and} \quad  \mathcal V= \mathbb B_{\frac{\varepsilon}{2}}(q^{(1)}).$$
Let $f_1\in C^{\infty}_0(\mathcal V)$ be arbitrary. As $\{\psi^{(1)}_k\}_{k=0}^{\infty}$ is an $L^2(M_1)$-orthonormal basis, there holds
\begin{equation}
	\label{f_basis_rep}
	f_1(x) = \sum_{k=0}^{\infty} a_k \, \psi^{(1)}_k(x), \qquad x\in M_1,
\end{equation} 
where given any $k=0,1,2,\ldots,$
\begin{equation}\label{a_k_def} a_k = (f_1,\psi_k^{(1)})_{L^2(\mathcal V)}=\int_{\mathcal V} f_1(x)\, \overline{\psi^{(1)}_k}(x) \,dV_{g_1}=\int_{\mathcal V} f_1(x)\, \overline{\psi^{(2)}_k}(x) \,dV_{g_2},\end{equation}
and the convergence in \eqref{f_basis_rep} is to be understood in the $L^2(M_1)$--topology. Now, as $f \in H^s(M_1)$ for all $s \ge 0$, \eqref{f_basis_rep} holds with convergence in the $H^s(M_1)$--norm (see \cite[Chapter 7, Section 10]{Taylor_book_vol_II}). By Sobolev's embedding, the series in \eqref{f_basis_rep} converges uniformly on $M_1$ to $f_1$.

Using \eqref{eq_prelim_Sobolev_norms} and the fact that $\{\psi^{(1)}_k\}_{k=0}^{\infty}$ forms an $L^2(M_1)$-orthonormal basis, we obtain that the coefficients $a_k$ satisfy the bounds
\begin{equation}\label{a_k_bound}
\begin{aligned}
	|a_k| \leq \|f_1\|_{H^{r}(\mathcal V)}\, \|\psi_k^{(1)}\|_{H^{-r}(M_1)}\leq c_r\|f_1\|_{H^{r}(\mathcal V)}\, \|(I-\Delta_{g_1})^{-\frac{r}{2}}\psi_k^{(1)}\|_{L^2(M_1)}\\
	\leq c_r \,(1+\mu_k^{(1)})^{-\frac{r}{2}}\,\|f_1\|_{H^r(\mathcal V)},
\end{aligned}
\end{equation}
for any $r\geq 0$, where $c_r>0$ depends only on $(M_1,g_1)$ and $r$. For future reference, we also record that 
\begin{equation}
	\label{mu_1_bound}
	\sum_{k=0}^{\infty} \frac{1}{(1+\mu_k^{(1)})^{\frac{r}{2}}} <\infty \qquad \forall\, r>n.
\end{equation}
The claim \eqref{mu_1_bound} follows from the Weyl law bounds for the eigenvalues \eqref{eq_400_3}. 

Let us (for now formally) define the function
\begin{equation}
	\label{tilde_f}
	f_2(x) = \sum_{k=0}^{\infty} a_k \,\psi^{(2)}_k(x) \qquad x\in M_2,
\end{equation}
where $a_k$ are defined by  \eqref{a_k_def}.
We claim that the definition above is well-posed in the sense of the convergence of the right hand side with respect to the $H^{s}(M_2)$ norm for any $s\geq 0$ and that $f_2\in C^{\infty}(M_2)$. Indeed, let us first observe that owing to \eqref{psi_eq} there holds
\begin{equation}
\label{eq_500_1} 
\|\psi^{(2)}_k\|_{L^2(O)}=  \|\psi^{(1)}_k\|_{L^2(O)} \leq  \|\psi^{(1)}_k\|_{L^2(M_1)}=1 \quad k=0,1,\ldots.
\end{equation}
Applying Lemma~\ref{lem_nontrapping} with $M=M_2$ and $g=g_2$, the inequality \eqref{eq_500_1} reduces to 
\begin{equation}
\label{eq_500_2}
\|\psi^{(2)}_k\|_{L^2(M_2)} \leq C\, \quad k=0,1,\ldots,
\end{equation}
for some $C>0$ independent of $k$. Next, for any $m=1,2,\dots$, using that $(I-\Delta_{g_2})^m \psi^{(2)}_k=(1+\mu_k^{(2)})^m  \psi^{(2)}_k$ on $M_2$, the bounds \eqref{eq_prelim_Sobolev_norms}, and \eqref{eq_500_2}, we get 
\begin{equation}
\label{eq_500_3}
\|\psi^{(2)}_k\|_{H^{2m}(M_2)} \leq C (1+\mu_k^{(2)})^m\|\psi^{(2)}_k\|_{L^2(M_2)}\le C(1+\mu_k^{(2)})^m,
\end{equation}
for some $C=C_m>0$ independent of $k$. Letting $s\ge 0$ and interpolating between the bounds \eqref{eq_500_2} and \eqref{eq_500_3}, we obtain that 
\begin{equation}
	\label{psi_est_1}
	\|\psi^{(2)}_k\|_{H^s(M_2)} \leq C\, (1+\mu^{(2)}_k)^{\frac{s}{2}}, \quad k=0,1,\ldots, 
\end{equation}
where $C=C_s>0$ is independent of $k$, see \cite[Theorem 7.22]{Grubb_book}. For each fixed $s\geq 0$, recalling that $\mu_k^{(1)}=\mu_k^{(2)}$  and combining \eqref{psi_est_1} and \eqref{a_k_bound} with some fixed $r>n+s$,  we get
\begin{equation}
\label{eq_500_4}
  \|a_k\psi_k^{(2)}\|_{H^s(M_2)} \leq c_{r}\,C_s\, \|f_1\|_{H^r(\mathcal V)}\frac{1}{(1+\mu^{(1)}_k)^{\frac{r-s}{2}}}.
 \end{equation}
The bound \eqref{eq_500_4}, together with \eqref{mu_1_bound}, shows that the series in \eqref{tilde_f} converges on $M_2$ with respect to the $H^s(M_2)$-norm for any $s \geq 0$. Therefore, by Sobolev's embedding, the series in \eqref{tilde_f} converges uniformly on $M_2$ to $f_2$, and thus $f_2 \in C^\infty(M_2)$.

Let us also emphasize that the equality \eqref{psi_eq}, together with the uniform convergence of the series in \eqref{f_basis_rep} and \eqref{tilde_f}, implies that
\begin{equation}\label{tildefonO}
f_2(x) = f_1(x) \qquad \forall\, x \in O.
\end{equation}
Our goal for the remainder of this proof is to show that $f_2$ must vanish identically on $M_2 \setminus O$. To this end, let us define (for now formally)
\begin{equation}
    \label{U_j_def}
    U_j(t,x) = \sum_{k=0}^{\infty} a_k \cos(\sqrt{\mu_k^{(j)}} t) \psi_k^{(j)}(x), \qquad t \in \mathbb{R}, \quad x \in M_j, \quad j=1,2,
\end{equation}
where $\{a_k\}_{k=0}^{\infty}$ are as defined by \eqref{a_k_def}. We remark that the formal definition \eqref{U_j_def} is well-posed in the sense of convergence with respect to the $C^l(\mathbb{R}; H^s(M_j))$ norm for all $l \ge 0$ and $s \ge 0$. This follows analogously to the proof of the well-posedness of the definition \eqref{tilde_f} and therefore is omitted here. It is straightforward to see that for each $j=1,2,$ the function $U_j\in C^{\infty}(\R\times M_j)$ is the unique solution to the wave equation
\begin{equation}\label{wave_pf}
	\begin{aligned}
		\begin{cases}
			\p_t^2 U_j - \Delta_{g_j} U_j=0 
			&\text{on  $\R\times M_j$},
			\\
			U_j(0,x)=f_j(x) & \text{on $M_j$},
			\\
			\p_t U_j(0,x)=0 & \text{on $M_j$}.
		\end{cases}
	\end{aligned}
\end{equation}
In particular, we observe that the following equality is satisfied,
\begin{equation}
	\label{U_j_equality}
	U_1(t,x)=U_2(t,x), \qquad t\in \R, \quad x\in O,
 \end{equation}
thanks to the definition \eqref{U_j_def} together with the equality \eqref{psi_eq}. Next, we recall that $f_1\in C^{\infty}_0(\mathcal V)$. Therefore, using the fact that
$$ \textrm{dist}_{g_1}(\p \mathcal V,\p \mathcal W) \geq \textrm{dist}_{g_1}(p,q^{(1)})-\varepsilon.$$
together with finite speed of propagation for the wave equation \eqref{wave_pf} with $j=1$, it follows that
\begin{equation}
	\label{U_1_vanish}
	U_1(t,x)=0, \quad t \in [-T,T], \quad x\in \mathcal W,
\end{equation}
where 
$$ T:=  \textrm{dist}_{g_1}(\p \mathcal V,\p \mathcal W)\geq \textrm{dist}_{g_1}(p,q^{(1)})-\varepsilon\geq \textrm{dist}_{g_2}(p,q^{(2)})-\varepsilon,$$
where we have used \eqref{pq_rel} in the last inequality above. In view of \eqref{U_1_vanish} and \eqref{U_j_equality} we deduce that
\begin{equation}
	\label{U_2_vanish}
	U_2(t,x)=0, \quad t \in [-T,T], \quad x\in \mathcal W.
\end{equation}
Applying the unique continuation result of Tataru (see \cite{Tataru} and \cite{KKL}), it follows that
$$ f_2(x)=U_2(0,x)=0 \quad \text{for all $x\in M_2$ that satisfy $\textrm{dist}_{g_2}(x,p)\leq \textrm{dist}_{g_2}(p,q^{(2)})-\varepsilon$}.$$
Combining the previous equality with the property (P) we deduce that \begin{equation}\label{f_2_vanish}
	f_2(x)=0, \qquad x \in M_2\setminus O.
	\end{equation} 
It follows from \eqref{f_2_vanish} and \eqref{tildefonO} together with \eqref{a_k_def} and \eqref{psi_eq} that given any function $f\in C^{\infty}_0(\mathcal V)$ there holds:
\begin{equation}\label{eigen_expansion}
f(x)=\sum_{k=0}^{\infty} (f,\psi^{(2)}_k)_{L^2(\mathcal V)}\, \psi_k^{(2)}(x) \qquad x\in M_2,
\end{equation}
where we remind the reader that the convergence of the infinite series above holds in any $H^s(M_2)$-norm, $s\geq 0$.

We claim that equation \eqref{eigen_expansion} implies that $\{\psi_k^{(2)}\}_{k=0}^{\infty}$ must be an orthonormal basis of $L^2(M_2)$. To show this claim, we will rename the ordered Schauder basis $\{\psi_k^{(2)}\}_{k=0}^{\infty}$ in terms of the multiplicity of the eigenvalues, thus renaming it as the ordered set \begin{equation}\label{new_basis}\{\theta_{k,1},\ldots,\theta_{k,d_k}\}_{k=0}^{\infty}.\end{equation}
We emphasize here that we are merely renaming the basis and not changing its order. Equation \eqref{eigen_expansion} can now be restated as saying that given any $f\in C^{\infty}_0(\mathcal V)$, there holds
\begin{equation}\label{new_eigen_expansion}
f(x) = \sum_{k=0}^{\infty} \sum_{\ell=1}^{d_k} (f,\theta_{k,\ell})_{L^2(\mathcal V)}\, \theta_{k,\ell}(x), \quad x\in M_2.
\end{equation}
Our aim is then to show that \eqref{new_eigen_expansion} implies that for any fixed integer $k\geq 0$ the set $\{\theta_{k,1},\ldots,\theta_{k,d_k}\}$ is an $L^2(M_2)$-orthonormal set. We observe that, in view of \eqref{new_eigen_expansion} and the fact that eigenfunctions of $-\Delta_{g_2}$ associated with distinct eigenvalues are always $L^2(M_2)$-orthogonal, we have
\begin{equation}
	\label{final_eigen_expansion}
	(f,\theta_{k,m})_{L^2(\mathcal V)}= \sum_{\ell=1}^{d_k} (f,\theta_{k,\ell})_{L^2(\mathcal V)}\, (\theta_{k,\ell},\theta_{k,m})_{L^2(M_2)},
\end{equation}
for any $f\in C^{\infty}_0(\mathcal V)$, any $k\geq 0$ and any $m=1,\ldots,d_k$. Applying Lemma~\ref{lem_fixed_eigenspace} with $M=M_2$ and $g=g_2$, it follows that
$$ (\theta_{k,m},\theta_{k,\ell})_{L^2(M_2)}= \delta_{ml} \quad k=0,1,\ldots \quad m=0,\ldots,d_k,$$
where $\delta_{ml}$ is the Kronecker delta function. This shows that the set \eqref{new_basis} and thus $\{\psi^{(2)}_k\}_{k=0}^{\infty}$ is an $L^2(M_2)$-orthonormal basis.

We have arrived at the equality of two normalized spectral data subject to the observation set $O$ for the two manifolds $(M_j,g_j)$, $j=1,2$. The claim in Theorem~\ref{t2} now follows from the standard Gel'fand inverse spectral problem on closed manifolds, see \cite[Corollary 2]{HLOS_2018}, and see also \cite{KrKaLa} and \cite[Section 3]{Fei21}. 
\end{proof}

\section{Proof of Theorem~\ref{t1}}
\label{sec_proofs}

We have already proven in Section~\ref{sec_reduction_to_spectral_data} the reduction from  the Cauchy data set $\mathcal C_{M,g,V}^O$ to the knowledge of a non-normalized (Schauder) basis consisting of eigenfunctions associated to $-\Delta_g$ on $M$ restricted to the set $O$. In Section~\ref{sec_Gelfand} we showed that the latter data determines the isometry class of the manifold. Therefore, the only remaining unknown is the zeroth order coefficient $V$. The determination of lower order coefficients in various nonlocal elliptic equations are well-known. Indeed, as introduced in the work \cite{GSU}, the idea is to use the unique continuation principle $(\textrm{UCP})^\prime$ to obtain a Runge type approximation property result for solutions to the nonlocal equation. The recovery of the zeroth order coefficient $V$ then follows from an integration by parts technique that is very common in inverse problems that relates the Cauchy data set to certain knowledge of products of solutions to the nonlocal equation. We will also refer the reader to \cite[Section 5.3--Section 6]{Ghosh_Lin_Xiao_2017} for a presentation of the above mentioned idea in the context of variable leading order coefficients. We will present here a slightly different approach that is just based on the unique continuation principle $(\textrm{UCP})^\prime$.

\begin{proof}[Proof of Theorem~\ref{t1}]

In view of Proposition~\ref{prop1} and Theorem~\ref{t2}, there exists a smooth diffeomorphism $\Phi: M_1\to M_2$ such that $\Phi|_{O}$ is the identity map and additionally there holds $g_1= \Phi^\star g_2$ on $M_1$. Let us now define the function $\tilde V_2 \in C^{\infty}(M_1)$ by   
$$
\tilde V_2(x) = V_2(\Phi(x)), \quad \forall\, x\in M_1.
$$
Our goal is to prove that $V_1$ must be equal to $\tilde{V}_2$ on $M_1$. To this end, first, by Lemma~\ref{lem_obstruction}, we observe that
\begin{equation}
\label{eq_600_0}
\mathcal{C}_{M_2, g_2, V_2}^O = \mathcal{C}_{M_1, \Phi^\star g_2, \tilde{V}_2}^O = \mathcal{C}_{M_1, g_1, \tilde{V}_2}^O.
\end{equation}
Combining the equality \eqref{eq_600_0} with $\mathcal C_{M_2,g_2,V_2}^O=\mathcal C_{M_1,g_1,V_1}^O$, we deduce that
\begin{equation}\label{new_cauchy_eq} \mathcal C_{M_1,g_1,V_1}^O = \mathcal C_{M_1,g_1, \tilde{V}_2}^O.\end{equation}

Let $f \in \mathcal{H}_{M_1, g_1, V_1}^O$ be a non-zero function, which we know exists according to Remark~\ref{rem_structure_H_set} and Definition~\ref{def_S}. Define $u_1 = S_{M_1, g_1, V_1}(f) \in C^\infty(M_1)$. Then $u_1$ satisfies the equation
\begin{equation}
	\label{u_1_final_eq}
	(-\Delta_{g_1})^\alpha u_1 + V_1 u_1 = f \quad \text{on } M_1.
\end{equation}
The equality \eqref{new_cauchy_eq} implies that there exists some $u_2 \in C^\infty(M_1)$ that solves the equation
\begin{equation}
\label{eq_600_3}
(-\Delta_{g_1})^\alpha u_2 + \tilde{V}_2 u_2 = 0 \quad \text{on } M_1 \setminus \overline{O},
\end{equation}
with the additional property that
\begin{equation}
\label{eq_600_4}
	(u_1 - u_2)|_O = 0, \quad ((-\Delta_{g_1})^\alpha (u_1 - u_2))|_O = 0.
\end{equation}
By the unique continuation principle $(\textrm{UCP})'$ (recall that this follows from Theorem~\ref{thm_ent} with $N=1$), we conclude from \eqref{eq_600_4} that:
\begin{equation}
\label{eq_600_5}
 u_1 = u_2 \quad \text{on} \quad  M_1.
\end{equation}
Using \eqref{eq_600_5}, we obtain from \eqref{u_1_final_eq} and \eqref{eq_600_3} that
\[
(\tilde{V}_2(x) - V_1(x)) \, u_1(x) = 0 \quad \forall x \in M_1 \setminus \overline{O}.
\]
Recall that $\tilde{V}_2|_O = V_1|_O$. To establish that $\tilde{V}_2$ is identical to $V_1$ on $M_1$, it suffices to show that the set
\[
\mathbb{D} = \{ x \in M_1 \setminus \overline{O} \,:\, u_1(x) \neq 0 \}
\]
is dense in $M_1 \setminus \overline{O}$. We will prove this via contradiction. Suppose, to the contrary, that there exists an open nonempty set $\mathcal{U} \subset M_1 \setminus \overline{O}$ such that $\mathbb{D} \cap \mathcal{U} = \emptyset$, and therefore, $u_1|_\mathcal{U} = 0$. It follows from \eqref{u_1_final_eq} that $(( -\Delta_{g_1} )^\alpha u_1)|_\mathcal{U} = 0$. This implies that $u_1 = 0$ on $M_1$ thanks to $(\textrm{UCP})'$, and thus $f$ must be zero. This contradiction shows that $\mathbb{D}$ is dense in $M_1 \setminus \overline{O}$, completing the proof.
\end{proof}

\begin{remark}
Note that once the smooth diffeomorphism $\Phi: M_1 \to M_2$ such that $\Phi|_O = \text{Id}$ and $g_1 = \Phi^\star g_2$ on $M_1$ is recovered, the assumption that $V_1|_O = 0$ and $V_2|_O = 0$ is not needed to show that $V_1 = V_2 \circ \Phi$.
\end{remark}

\begin{proof}[Proof of Corollary~\ref{cor1}]
	The corollary follows immediately once we show that, under the hypotheses of Corollary~\ref{cor1}, the condition (H) is satisfied for $(M_j, g_j)$ with $j=1, 2$. Note that $(M_j \setminus O, g_j)$ is nontrapping by definition of a simple manifold.

For $j=1, 2$, we consider a small neighborhood $\mathcal{U}_j$ of $M_j \setminus O$ in $M_j$ so that $(\overline{\mathcal{U}_j}, g_j)$ is a simple Riemannian manifold and additionally that $\partial \mathcal{U}_1 = \partial \mathcal{U}_2$, see \cite[Proposition 3.8.7]{Paternain_Salo_Uhlmann_book}. To conclude the proof, we will show that for any point $p \in \partial \mathcal{U}_1 = \partial \mathcal{U}_2 \subset O$, we have $\mathcal{A}_{M_j, g_j}(p) \subset O$ for both $j=1, 2$. Assume, on the contrary, that there exists a point $p \in \partial \mathcal{U}_1 = \partial \mathcal{U}_2$ such that $\mathcal{A}_{M_j, g_j}(p) \cap (M_j \setminus O) \ne \emptyset$ for some $j \in \{1, 2\}$. Let $q^{(j)} \in \mathcal{A}_{M_j, g_j}(p) \cap (M_j \setminus O)$ and let us denote by $\gamma_j$ the unique geodesic that connects $p$ to $q^{(j)}$ in $\overline{\mathcal{U}_j}$. The existence of such a geodesic follows from the simplicity of $(\overline{\mathcal{U}_j}, g_j)$. Recalling that $q^{(j)} \in \mathcal{U}_j$, by extending the geodesic a small distance beyond the point $q^{(j)}$, we obtain a new point $\tilde{q}^{(j)}$ that is a farther distance from $p$ than $q^{(j)}$, reaching a contradiction to the fact that $q^{(j)} \in \mathcal{A}_{M_j, g_j}(p)$. The result follows.
\end{proof}

\begin{appendix}
	
\section{Obstruction to uniqueness in the anisotropic Calder\'on problem for fractional Schr\"odinger equations}

\label{app_obstruction}

The purpose of this appendix is to discuss an obstruction to uniqueness for the inverse problem (IP) stated in the introduction. The following result is presented here for completeness and the convenience of the reader. For similar arguments, see also \cite[Theorem 4.2]{Ghosh_Uhlmann_2021}.
\begin{lemma}
\label{lem_obstruction}
Let $\alpha\in (0,1)$.  Let $(M_j,g_j)$ be a smooth closed Riemannian manifold of dimension $n\ge 2$ and let $V_j\in C^\infty(M_j)$, $j=1,2$. Let $O\subset M_1\cap M_2$ be an open nonempty set such that $M_j\setminus\overline{O}\ne 0$, $j=1,2$. Assume that there is a smooth diffeomorphism $\Phi: M_1\to M_2$ such that $g_1= \Phi^\star g_2$,  $\Phi|_{O}=\text{Id}$, and $V_1= V_2\circ\Phi$. Then 
\begin{equation}
\label{eq_600_0_lemma}
\mathcal{C}_{M_2, g_2, V_2}^O = \mathcal{C}_{M_1, g_1, V_1}^O. 
\end{equation}
\end{lemma}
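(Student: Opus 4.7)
The plan is to show that the pullback by $\Phi$ sets up a bijection between the Cauchy data sets. The key ingredients are: (i) $\Phi^\star : C^\infty(M_2) \to C^\infty(M_1)$ intertwines the fractional Laplacians, and (ii) $\Phi|_O = \mathrm{Id}$ so that the Cauchy data, which are read off on $O$, are unchanged by pullback.

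\textbf{Step 1: Intertwining of Laplace--Beltrami operators.} Since $g_1 = \Phi^\star g_2$, the diffeomorphism $\Phi$ is a Riemannian isometry, hence $dV_{g_1} = \Phi^\star dV_{g_2}$ and $\Phi^\star : L^2(M_2) \to L^2(M_1)$ is a unitary isomorphism that preserves the Sobolev scale. The standard invariance of the Laplace--Beltrami operator under isometries gives
\[
-\Delta_{g_1}(\Phi^\star u) = \Phi^\star(-\Delta_{g_2} u), \quad u \in H^2(M_2),
\]
which may be verified directly in local coordinates using the formula recalled in the introduction.

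\textbf{Step 2: Intertwining of fractional Laplacians.} Since $\Phi^\star$ is unitary and intertwines the self-adjoint operators $-\Delta_{g_2}$ and $-\Delta_{g_1}$ (including their domains), it intertwines their entire functional calculi. In particular, for the spectral decompositions described in Section~\ref{sec_pre}, $\Phi^\star$ maps an $L^2(M_2)$-orthonormal eigenbasis of $-\Delta_{g_2}$ to an $L^2(M_1)$-orthonormal eigenbasis of $-\Delta_{g_1}$ with identical eigenvalues, and the definition \eqref{alpha_laplace} immediately yields
\[
(-\Delta_{g_1})^\alpha (\Phi^\star u) = \Phi^\star ((-\Delta_{g_2})^\alpha u), \quad u \in H^{2\alpha}(M_2).
\]

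\textbf{Step 3: Transfer of the Cauchy data set.} Let $(u|_O, ((-\Delta_{g_2})^\alpha u)|_O) \in \mathcal{C}_{M_2, g_2, V_2}^O$, with $u \in C^\infty(M_2)$ solving $(-\Delta_{g_2})^\alpha u + V_2 u = 0$ on $M_2 \setminus \overline{O}$. Set $w := \Phi^\star u \in C^\infty(M_1)$. Using Step 2 and $V_1 = V_2 \circ \Phi$,
\[
(-\Delta_{g_1})^\alpha w + V_1 w = \Phi^\star\bigl((-\Delta_{g_2})^\alpha u + V_2 u\bigr) = 0 \quad \text{on } \Phi^{-1}(M_2 \setminus \overline{O}).
\]
Since $\Phi|_O = \mathrm{Id}$ and $\Phi$ is a bijection, $\Phi^{-1}(M_2 \setminus \overline{O}) = M_1 \setminus \overline{O}$. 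Moreover, $\Phi|_O = \mathrm{Id}$ gives
\[
w|_O = u|_O, \qquad ((-\Delta_{g_1})^\alpha w)|_O = \Phi^\star((-\Delta_{g_2})^\alpha u)|_O = ((-\Delta_{g_2})^\alpha u)|_O,
\]
so the pair belongs to $\mathcal{C}_{M_1, g_1, V_1}^O$. This shows $\mathcal{C}_{M_2, g_2, V_2}^O \subset \mathcal{C}_{M_1, g_1, V_1}^O$. Applying the same argument to $\Phi^{-1}$, which is an isometry $(M_2, g_2) \to (M_1, g_1 \text{ pushed forward})$ with $\Phi^{-1}|_O = \mathrm{Id}$ and relates $V_1$ to $V_2$ analogously, yields the reverse inclusion and hence \eqref{eq_600_0_lemma}.

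The only nontrivial point is Step 2, which amounts to the statement that the functional calculus of a self-adjoint operator is natural under unitary conjugation; given the identification of eigenbases this reduces to a direct term-by-term comparison in \eqref{alpha_laplace}. Everything else is bookkeeping relying on $\Phi|_O = \mathrm{Id}$.
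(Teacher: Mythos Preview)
Your proposal is correct and follows essentially the same approach as the paper: both argue that pullback by the isometry $\Phi$ is a unitary map intertwining $-\Delta_{g_1}$ and $-\Delta_{g_2}$, invoke functional calculus to pass to $(-\Delta_{g_j})^\alpha$, and then read off the equality of Cauchy data using $\Phi|_O=\mathrm{Id}$ and the induced bijection between $M_1\setminus\overline{O}$ and $M_2\setminus\overline{O}$.
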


\begin{proof}
First, since $\Phi$ is a Riemannian isometry, we note that 
\begin{equation}
\label{eq_600_1}
(-\Delta_{g_1})(u \circ \Phi) = (-\Delta_{g_2} u) \circ \Phi,
\end{equation}
for all $u \in C^\infty(M_2)$; see \cite[pages 99, 100]{GPR_book}. Secondly, we claim that the map
\[
U: L^2(M_2) \to L^2(M_1), \quad u \mapsto u \circ \Phi,
\]
is unitary. Indeed, we have  
\[
\|u \circ \Phi\|_{L^2(M_1)}^2 = \int_{M_1} |u \circ \Phi|^2 dV_{g_1} = \int_{M_2} |u|^2 dV_{g_2} = \|u\|_{L^2(M_2)}^2,
\]
see \cite[page 78]{GPR_book}, showing the claim. Hence, rewriting \eqref{eq_600_1} as 
\[
(-\Delta_{g_1}) = U \circ (-\Delta_{g_2}) \circ U^{-1},
\]
and using the functional calculus of self-adjoint operators, we get 
\begin{equation}
\label{eq_600_2}
(-\Delta_{g_1})^\alpha = U \circ (-\Delta_{g_2})^\alpha \circ U^{-1}.
\end{equation}

Now let $u_2 \in C^\infty(M_2)$ be such that $(-\Delta_{g_2})^\alpha u_2 + V_2 u_2 = 0$ on $M_2 \setminus \overline{O}$. This, together with \eqref{eq_600_2}, implies that 
\[
0 = (-\Delta_{g_2})^\alpha u_2 + V_2 u_2 = (-\Delta_{g_1})^\alpha (u_2 \circ \Phi) \circ \Phi^{-1} + V_1 \circ \Phi^{-1} (u_2 \circ \Phi) \circ \Phi^{-1} \quad \text{on} \quad M_2 \setminus \overline{O},
\]
showing $u_1 := u_2 \circ \Phi \in C^\infty(M_1)$ satisfies $(-\Delta_{g_1})^\alpha u_1 + V_1 u_1 = 0$ on $M_1 \setminus \overline{O}$. Note that here we used that $\Phi: M_1 \setminus \overline{O} \to M_2 \setminus \overline{O}$ is a smooth diffeomorphism, thanks to the fact that $\Phi|_{\overline{O}} = \text{Id}$. Furthermore, we have $u_2|_{O} = u_1|_{O}$, and using \eqref{eq_600_2}, we get $((-\Delta_{g_2})^\alpha u_2)|_{O} = ((-\Delta_{g_1})^\alpha u_1)|_{O}$, showing that $\mathcal{C}_{M_2, g_2, V_2}^O \subset \mathcal{C}_{M_1, g_1, V_1}^O$. The opposite inclusion can be proved in a similar way. This establishes \eqref{eq_600_0_lemma}.
\end{proof}

\end{appendix}

\section*{Acknowledgements}

The research of K.K. is partially supported by the National Science Foundation (DMS 2109199 and DMS 2408793). The research of G.U. is partially supported by NSF, a Robert R. and Elaine F. Phelps Endowed Professorship at the University of Washington, and a Si-Yuan Professorship at IAS, HKUST.

\end{document}